\documentclass[numbook,smallextended]{svjour3}
\smartqed

\usepackage{epsfig}
\usepackage{amsmath}
\usepackage{amssymb}
\usepackage{amscd}
\usepackage{graphicx}
\usepackage{color}
\usepackage{verbatim}
\usepackage{dsfont}

\spnewtheorem*{xproof}{}{\itshape}{\rmfamily}

\renewenvironment{proof}[1][\proofname]
 {\xproof}
 {\endxproof}

\newtheorem{thm}{Theorem}[section]
\newtheorem{lem}[thm]{Lemma}

\newtheorem{fact}[thm]{Fact}

\newtheorem{cor}[thm]{Corollary}
\newtheorem{defn}[thm]{Definition}

\spnewtheorem{rem}[thm]{Remark}{\it}{\rm}
\spnewtheorem{exam}[thm]{Example}{\it}{\rm}

\def \N {\mathbb N}

\def \Z {\mathbb Z}
\def \R {\mathcal R}
\def \F {\mathcal F}

\def \M {\mathcal M}
\def \P {\mathcal P}

\def \id {\mathsf{Id}}

\def \sq {sequence}
\def \xt {$(X,T)$}
\def \ys {$(Y,S)$}

\def \yst {$(\hat Y,\hat S)$}
\def \mtx {\mathcal M_T(X)}
\def \mtxp {\mathcal M_{T'}(X')}
\def \mtxt {\mathcal M_{\hat T}(\hat X)}
\def \msy {\mathcal M_S(Y)}
\def \mtsn {\mathcal M_{\hat T}(\mathbf S_n)}

\def \xtp {$(X',T')$}
\def \xtt {$(\hat X,\hat T)$}
\def \xttp {$(\hat X',\hat T')$}
\def \xtu {$(\mathfrak X,\mathfrak T)$}

\def \xtup {$(\mathfrak X',\mathfrak T')$}

\def \tl {topological}
\def \im {invariant measure}
\def \inv {invariant}
\def \ds {dynamical system}
\def \usc {upper semicontinuous}

\def \htop {h_{\mathsf{top}}}
\def \hsex {h_{\mathsf{sex}}}
\def \hemb {h_{\mathsf{emb}}}
\def \diam {\mathsf{diam}}
\def \eh {E_{\mathsf{min}}}

\def \Per {\mathsf{Per}}
\def \psup {\mathfrak P_{\mathsf{sup}}}
\def \plim {\mathfrak P_{\mathsf{lim}}}
\def \pmax {p^{\mathsf{max}}}
\def \pmin {p^{\mathsf{min}}}

\numberwithin{equation}{section}

\begin{document}

\title{Uniform generators, symbolic extensions with an embedding, and structure of periodic orbits \thanks{The research of the second author is supported by the NCN (National Science Center, Poland) grant 2013/08/A/ST1/00275.}}
\titlerunning{Uniform generators}
\author{David Burguet and Tomasz Downarowicz}

\institute{David Burguet
\at CNRS-UPMC Univ Paris 06, UMR 7599, Laboratoire de Probabilites et Modeles Aleatoires, 4 place Jussieu,
F-75005 Paris,\\\email{david.burguet@upmc.fr}
\and
Tomasz Downarowicz 
\at Faculty of Pure and Applied Mathematics, Wroc{\l}aw University of Technology, Wroc{\l}aw 50-370, Poland,\\\email{Tomasz.Downarowicz@pwr.edu.pl }
}

\date{Received: date / Accepted: date}

\maketitle

\begin{abstract}For a \tl\ \ds\ \xt\ we define a uniform generator as a finite measurable partition such that the symmetric cylinder sets in the generated process shrink in diameter uniformly to zero. The problem of existence and optimal cardinality of uniform generators has lead us to new challenges in \emph{the theory of symbolic extensions}. At the beginning we show that uniform generators can be identified with so-called symbolic extensions with an embedding, i.e., symbolic extensions admitting an equivariant measurable selector from preimages. From here we focus on such extensions and we strive to characterize the collection of the corresponding extension entropy functions on \im s. For aperiodic zero-dimensional systems we show that this collection coincides with that of extension entropy functions in arbitrary symbolic extensions, which, by the general theory of symbolic extensions, is known to coincide with the collection of all affine superenvelopes of the entropy structure of the system. In particular, we recover, after \cite{Bu16}, that an aperiodic zero-dimensional system is asymptotically $h$-expansive if and only if it admits an isomorphic symbolic extension. Next we pass to systems with periodic points, and we introduce the notion of a period tail structure, which captures the local growth rate of periodic orbits. Finally, we succeed in precisely identifying the wanted collection of extension entropy functions in symbolic extensions with an embedding: these are all the affine superenvelopes of the usual entropy structure which lie above certain threshold function determined by the period tail structure. This characterization allows us, among other things, to give estimates (and in examples to compute precisely) the optimal cardinality of a uniform generator. As a byproduct, we prove a theorem saying that every zero-dimensional system admits an aperiodic zero-dimensional extension which is isomorphic on aperiodic measures and otherwise principal (periodic measures lift to measures of entropy zero).
\end{abstract}

\section{Introduction}

To make the subject of this paper precise, we start the introduction with a formal definition:

\begin{defn}\label{pergen}
By a \emph{uniform generator} in an invertible \tl\ \ds\ \xt, where $T$ is a homeomorphism of a compact metric space $X$, we mean a finite measurable\footnote{See Remark \ref{sm} for the precise meaning of measurability.} partition $\P$ of $X$ satisfying 
$$
\lim_n\diam(\P^{[-n,n]}) = 0,
$$
where $\P^{[-n,n]}=\bigvee_{i=-n}^nT^i\P$ and $\diam(\P)$ is the maximal diameter of atoms of $\P$.
\end{defn}

Notice that since the partitions $\P^{[-n,n]}$ separate points, $\P$ is a Krieger generator simultaneously for all \im s on $X$. However, not every such simultaneous generator is uniform; for that the distance of separation must shrink uniformly throughout the space (see also Remark \ref{Hoch}). For example, if \ys\ is a subshift and $\P_\Lambda$ is the \emph{zero-coordinate partition} (whose atoms are cylinder sets over one-blocks corresponding to the symbols in the alphabet $\Lambda$), then $\P_\Lambda$ is a uniform generator (it is also clopen, which makes it specifically good). Uniform (not necessarily clopen) generators exist in some non-expansive systems as well: take for instance the partition into any two complementary arcs in an irrational rotation of the circle.

In this paper we focus on the existence and optimal cardinality of uniform generators in \tl\ \ds s,
a task which turns out unexpectedly intricate, and leading to new developments in the entropy theory of symbolic extensions. At the beginning of our study we make a crucial observation which links uniform generators with symbolic extensions. 
 
\begin{thm}\label{sg}
Let \xt\ be a \tl\ \ds. The following conditions are equivalent:
\begin{enumerate}
	\item There exists a uniform generator\footnote{Later we will show that the measurability 
	assumption of $\P$ can be dropped. That is, the existence of a ``non-measurable uniform 
	generator'' implies the existence of a measurable one---see Remark \ref{measu}.} $\P$ in \xt, of 
	cardinality $\ell$;
	\item There exists a symbolic extension $\pi:(Y,S)\to(X,T)$ over an alphabet of cardinality $\ell$, 
	which admits an \emph{equivariant measurable selector from preimages}, i.e., a map $\psi:X\to Y$ such
	that 
	\begin{enumerate}
	\item $\psi\circ T = S\circ\psi$, and
	\item $\pi\circ\psi = \id_X$.
	\end{enumerate}
\end{enumerate}	
Terminology: Since the map $\psi$ is a measurable embedding of \xt\ into \ys, such a $\pi$ (or the system \ys) is called a \emph{symbolic extension with an embedding}. 

\end{thm}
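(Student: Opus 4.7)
The natural bijection between finite partitions and symbol-coding maps makes this essentially a translation exercise; the uniform generator condition translates exactly into uniform continuity of the inverse coding map.

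For $(1)\Rightarrow(2)$: starting with a uniform generator $\P=\{P_i\}_{i\in\Lambda}$ with $|\Lambda|=\ell$, I would define the itinerary map $\psi\colon X\to\Lambda^{\Z}$ by $\psi(x)_n=i\iff T^nx\in P_i$. Measurability of $\psi$ follows from the (assumed) measurability of $\P$ (cf.\ Remark \ref{sm}), and $S\circ\psi=\psi\circ T$ is immediate. Set $Y:=\overline{\psi(X)}\subseteq\Lambda^\Z$, a closed shift-invariant set, hence a subshift. Since the partitions $\P^{[-n,n]}$ separate points (as noted after Definition \ref{pergen}), $\psi$ is injective, so $\psi^{-1}\colon \psi(X)\to X$ is well defined. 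The hypothesis $\diam(\P^{[-n,n]})\to 0$ says precisely that whenever $\psi(x),\psi(x')\in\psi(X)$ agree on coordinates $[-n,n]$ (so that $x,x'$ lie in the same atom of $\P^{[-n,n]}$) one has $d(x,x')\le\diam(\P^{[-n,n]})$; hence $\psi^{-1}$ is uniformly continuous on $\psi(X)$ and extends uniquely to a continuous surjection $\pi\colon Y\to X$. Equivariance $\pi\circ S=T\circ\pi$ holds on the dense set $\psi(X)$ and propagates by continuity. Thus $\pi$ is the desired symbolic extension and $\psi$ its equivariant measurable section.

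For $(2)\Rightarrow(1)$: given a symbolic extension $\pi\colon(Y,S)\to(X,T)$ over an alphabet $\Lambda$ of size $\ell$ with an equivariant measurable section $\psi$, I would pull back the zero-coordinate clopen partition $\{C_i\}_{i\in\Lambda}$ of $Y$, setting $P_i:=\psi^{-1}(C_i)$. This gives a measurable partition $\P$ of $X$ with $\ell$ atoms. Using the equivariance of $\psi$, the $\P^{[-n,n]}$-atom of a point $x\in X$ equals $\psi^{-1}(W_n)$, where $W_n$ is the symmetric $(2n+1)$-cylinder in $Y$ containing $\psi(x)$; since $\pi\circ\psi=\id_X$, this atom is contained in $\pi(W_n)$. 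By compactness of $Y$ and uniform continuity of $\pi$, the diameters $\diam(\pi(W_n))$ tend to zero uniformly in $W_n$ as $n\to\infty$, giving $\diam(\P^{[-n,n]})\to 0$, as required.

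The argument is essentially bookkeeping, so I do not expect a deep obstacle. The two delicate points are (i) the extension of $\psi^{-1}$ from $\psi(X)$ to its closure $Y$ in the first direction, which is precisely where the uniform-generator hypothesis is used, and (ii) the conventions on measurability of $\P$ and $\psi$, settled in Remark \ref{sm}. Note that the construction in $(1)\Rightarrow(2)$ produces a section $\psi$ whose image is dense in $Y$, a feature that will presumably matter when the uniform generator is subsequently related to minimal symbolic extensions.
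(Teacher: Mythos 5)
Your proof is correct and takes essentially the same route as the paper's: in $(1)\Rightarrow(2)$ you build the itinerary map $\psi$, set $Y=\overline{\psi(X)}$, and define $\pi$ by extending $\psi^{-1}$ from the dense set $\psi(X)$ to $Y$ via uniform continuity, whereas the paper defines $\pi(y)$ directly as the unique point in $\bigcap_n\overline{C_n(y)}$ — two ways of packaging the same fact that $\diam(\P^{[-n,n]})\to 0$ forces the inverse coding to be uniformly continuous. The $(2)\Rightarrow(1)$ direction matches the paper's argument verbatim.
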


\begin{proof}
Let $\P$ be a uniform generator and let $\Lambda$ be an alphabet which bijectively labels the atoms of $\P$, i.e., $\P=\{P_a:a\in\Lambda\}$. The map $\psi:X\to \Lambda^\Z$ assigning to each $x$ its bilateral $\P$-name (by the rule $(\psi(x))(n) = a\ \iff\ T^n(x)\in P_a$) is measurable and equivariant. Let $Y=\overline{\psi(X)}$. Clearly, $Y$ is a subshift. For $y\in Y$ and $n\in\N$ let $C_n(y)$ denote the set of points whose $\P$-name coincides on the interval $[-n,n]$ with the central block $y[-n,n]$ of length $2n+1$ of $y$. Clearly, this set is an atom of $\P^{[-n,n]}$, so its diameter does not exceed $\diam(\P^{[-n,n]})$, and it is nonempty because the same block must occur in $\psi(x)$ for some $x\in X$. Thus the intersection $\bigcap_n \overline{C_n(y)}$ contains exactly one point which we denote as $\pi(y)$. It is obvious that $\pi:Y\to X$ is a \tl\ factor map and $\psi$ is a selector from its preimages.

Now assume that \xt\ has a symbolic extension $\pi:(Y,S)\to(X,T)$ admitting a required selector $\psi$. Let $\P_\Lambda$ denote the zero-coordinate partition of $Y$ and define $\P=\psi^{-1}(\P_\Lambda)$. Clearly, $\P$ is a measurable partition of $X$ of cardinality $\ell$, the same as that of the alphabet $\Lambda$ of $Y$. The convergence of the diameters of $\P^{[n,n]}$ to zero follows directly from the three facts: that the same property has $\P_\Lambda$ in $Y$, that each atom of $\P^{[-n,n]}$ is contained in the image by $\pi$ of an atom of $\P_\Lambda^{[-n,n]}$, and that $\pi$ is uniformly continuous. 
\qed \end{proof}

The above theorem allows us to switch from searching, among measurable partitions, for a uniform generator to studying symbolic extensions (with additional properties), which is a fairly well understood field. In this setup the main object of our interest will be the collection of extension entropy functions in symbolic extensions with an embedding. Once we manage to describe this collection, we can compute the optimal cardinality of a uniform generator very easily. Our main results are formulated for zero-dimensional systems, however, they equally apply to systems \xt\ which admit an \emph{isomorphic zero-dimensional extension}, i.e., a topological zero-dimensional extension such that the corresponding factor map is an isomorphism between each \im\ and its \emph{unique} preimage. The class of systems admitting an isomorphic zero-dimensional extension includes those which have the small boundary property, which is a large and well described class (however, it is unknown whether the two conditions are equivalent).

We prove that if \xt\ is zero-dimensional aperiodic (contains no periodic orbits) then the collection of extension entropy functions in symbolic extensions with an embedding is the same as that for general symbolic extensions. If, in addition, \xt\ is asymptotically $h$-expansive then we recover the result first obtained in \cite{Bu16}\footnote{The cited paper is the first work dealing with the subject of symbolic extensions with an embedding. For asymptotically $h$-expansive systems the existence of an isomorphic extension is proved even in presence of periodic points as long as they satisfy a condition called \emph{asymptotic per-expansiveness}.}, that it admits an \emph{isomorphic symbolic extension}, i.e., such that $\psi(X)$ has full measure for all \im s on \ys. Because the existence of an isomorphic symbolic extension implies (by former results) asymptotic $h$-expansiveness, for aperiodic zero-dimensional systems we obtain an equivalence.

The most interesting phenomena occur when \xt\ does have periodic points. The growth rate of periodic orbits then may influence the entropy of symbolic extensions with an embedding (and thus the cardinality of a uniform generator). The simplest example is a system \xt\ consisting exclusively of periodic points. Every such system admits a symbolic extension of entropy zero. But a symbolic extension with an embedding must have at least as many periodic orbits of every period as \xt, which, in view of the simple fact that the \tl\ entropy of a symbolic system is always at least as large as the exponential growth rate of periodic orbits, implies positive entropy of the extension, if there are sufficiently many periodic orbits in \xt\ (see also Example \ref{example1} in Section \ref{sec4}). So, for systems with periodic points, not all extension entropy functions appearing in symbolic extensions occur in symbolic extensions with an embedding, and a new challenge is to say precisely which ones do. We succeed in providing a characterization by introducing a \sq\ of functions defined on \im s, called the \emph{period tail structure}. This \sq\ is in some sense analogous to the entropy structure, but depends exclusively on the distribution of periodic orbits in \xt. By combining the period tail structure with the usual entropy structure we manage to identify the collection of appropriate superenvelopes, i.e., of the desired extension entropy functions. 

\begin{rem}\label{sm}
Recall that whenever we pass from a \tl\ \ds\ to a measure-theoretic system by fixing one of its \im s $\mu$, in order to obtain a standard probability space we must consider the sigma-algebra of Borel sets \emph{completed} with respect to $\mu$. Notice that the intersection of all such sigma-algebras (i.e., ``universally measurable sets'') coincides with the completion of the Borel sigma-algebra with respect to the sigma-ideal of the \emph{null sets}, i.e., sets of measure zero for all \im s. From now on, by a \emph{measurable set} we will mean any member of this completion.
\end{rem}

\begin{rem}\label{me}
Condition (2) in Theorem \ref{sg} can be weakened: it suffices that the map $\psi$ is defined almost everywhere on $X$, i.e., except on a null set. Indeed, it is always possible to prolong the map to the missing null set. First of all, if necessary, we can enlarge the null set $A$ so that it becomes invariant, (i.e., a union of entire orbits). Next, $A$ contains a set $B$ selecting exactly one point from every orbit of $A$. Now, the mapping $\psi$ can first be defined on $B$ as an arbitrary selector from the preimages by $\pi$, and then prolonged to the rest of $A$ following the rules of equivariance. So defined $\psi|_A$ is measurable, because we operate within a null set whose all subsets are measurable by definition.
\end{rem}

\begin{rem} Later in this paper we will deal with symbolic extensions with what we call \emph{partial embedding}, i.e., a selector from preimages defined only on some measurable invariant subset $F\subset X$ (in the sequel this subset will support all ergodic measures except some periodic ones). 
It can be proved that the existence of a symbolic extension with partial embedding is equivalent to the existence of a \emph{uniform partial generator}, i.e., a partition $\P$ of $F$ which satisfies the condition of Definition \ref{pergen}. This approach will be useful in applications to smooth dynamics discussed in \cite{Bu16'} (see also the end of the last section).
\end{rem}

\section{Preliminaries}
\subsection{Symbolic extensions}
By a \emph{subshift} we mean a \ds\ \ys, where $Y\subset\Lambda^\Z$ is closed and shift-\inv, and $S$ 
is the shift map $(Sy)_n = y_{n+1}$ ($y=(y_n)_{n\in\Z}\in Y$). A \emph{symbolic extension} of
\xt\ is a subshift \ys\ together with a \tl\ factor map (i.e., continuous equivariant surjection) 
$\pi:(Y,S)\to (X,T)$. The map $\pi$ induces a continuous and affine surjection $\pi^*:\msy\to\mtx$ from shift-\inv\ probability measures on $Y$ to $T$-\inv\ probability measures on $X$, defined by $(\pi^*\nu)(B) = \nu(\pi^{-1}(B))$ ($B$ is a Borel set in $X$). We will skip the star and use $\pi$ for this induced map. With an extension $\pi$ we associate the \emph{extension entropy function} on $\mtx$ defined by the formula
$$
h^\pi(\mu) = \sup\{h_\nu(S): \nu\in\pi^{-1}(\mu)\},
$$
where $h_\nu(S)$ denotes the Kolmogorov-Sinai entropy of $\nu$ on $Y$.
The \emph{symbolic extension entropy function} is defined on $\mtx$ as
$$
\hsex(\mu) = \inf\{h^\pi(\mu): \pi\text{ is a symbolic extension of \xt}\}.
$$
Also, by $h$ we will denote the \emph{entropy function} on $\mtx$, $h(\mu) = h_\mu(T)$. Recall that
the set $\mtx$, when regarded with the weak-star topology, is a Choquet simplex (in particular, a compact convex set) and its extreme points are precisely the ergodic measures. Below we recall two basic facts concerning the symbolic extension entropy function.
\begin{itemize}
	\item Both $\hsex$ and $h^\pi$ (in any symbolic extension) are nonnegative upper semicontinuous 
	functions, and so are the differences $\hsex - h$ and $h^\pi - h$.
	\item (\emph{Symbolic extension entropy variational principle}): $\sup\{\hsex(\mu):\mu\in\mtx\}$ equals 
	$\hsex(X,T)$ defined as the infimum of $\htop(Y,S)$ over all symbolic extensions of \xt.
\end{itemize}
Further facts will be provided after entropy structure is introduced.

\subsection{Zero-dimensional systems}\label{zedysy}\hfill\break
\emph{Array representation}. Every \tl\ \ds\ \xt\ on a zero-di\-men\-sio\-nal space $X$ is conjugate to an inverse limit of subshifts. Practically, this means that it admits an \emph{array representation} in
which every point $x$ is an array $[x_{k,n}]_{k\ge 1,n\in\Z}$, where all entries $x_{k,n}$ belong a finite alphabet $\Lambda_k$ (which does not depend on $x$ or $n$). It can be arranged that all the alphabets $\Lambda_k$ are the same, or even equal to $\{0,1\}$. The map $T$ is the \emph{horizontal shift} $(Tx)_{k,n} = x_{k,n+1}$. By projection on the first $k$ rows, $\pi_k x = [x_{i,n}]_{i\in[1,k], n\in\Z}$ we obtain a \tl\ factor of \xt\, denoted by $(X_k,T_k)$, which is a subshift over the alphabet $\Delta_k = \prod_{i=1}^k\Lambda_i$. The projection $\pi_k$ naturally applies not only to $X$ but also to $X_{k+1}$, so that $(X_k,T_k)$ is a \tl\ factor of $(X_{k+1},T_{k+1})$. Then \xt\ is the inverse limit of the subshifts $(X_k,T_k)$.

\medskip\noindent
\emph{System of markers}. Let \xt\ be an aperiodic zero-dimensional system given in an array representation using some finite alphabets $\Lambda_k$. In order to allow inserting markers we need to enlarge the alphabets: in row $k$ we will be using $\Lambda^*_k=\Lambda_k\times\{\emptyset,|\}=\{a,\,a|: a\in\Lambda_k\}$. 
Now we recall the \emph{Krieger's marker lemma} (see \cite[Lemma 2.2]{Bo83}, here we use a version for zero-dimensional aperiodic systems): In any aperiodic zero-dimensional system \xt, for every $n\ge 1$ there exists a clopen set $F\subset Y$ such that:
\begin{enumerate}
	\item[(a)] $T^{i}(F)$ are pairwse disjoint for $i = 0,1,\dots,n-1$,
	\item[(b)] $\bigcup_{i=-n}^n T^{i}(F) =X$. 
\end{enumerate}
We choose a fast increasing \sq\ $\{n_k\}$ of natural numbers and by applying the above lemma with the parameters $n_k$ we obtain clopen marker sets $F_k\subset X$. We distribute ``preliminary'' markers in every row $k$ of every array $x\in X$ by the rule: if $T^i x\in F_k$ then we place a marker in $x$ at the position $(k,i)$. By a \emph{gap} between the neighboring markers, say at $(k,i)$ and $(k,j)$ we will always mean the interval $[i+1,j]$ in row $k$. The lengths of these gaps range between $n_k$ and $2n_k+1$. Because each $F_k$ is clopen, we obtain a conjugate representation of \xt\ with the markers. The last step will be called the \emph{upward adjustment}. Proceeding inductively (the first step being idle), for $k\ge 2$ we move every marker in row $k$ to align it with the nearest marker in row $k-1$, say, on the right. Notice that each marker is moved by at most $n_1+n_2+\dots+n_{k-1}$, which we can assume, is smaller than $\frac{n_k}2$. Since the new position of each marker depends on a bounded rectangle in the array, the algorithm is continuous, hence it produces a conjugate model of \xt. From now on by $x\in X$ we will understand the array with all the markers included. We can summarize the properties of the markers just introduced:
\begin{enumerate}
	\item[(A)] the gaps between two neighboring markers in row $k$ range between $\pmin_k=\frac{n_k}2$ and 
	$\pmax_k=\frac52n_k+1$,
	\item[(B)] for $k>1$ every marker in row $k$ is aligned with a marker in row $k-1$.
\end{enumerate}
The latter condition simply means that the marker sets $F_k$ (altered by the upward adjustment) are nested.
If we ignore the symbols from $\Lambda_k$ and keep only the markers, we will obtain an array system $(X_0,T_0)$ over two symbols $\{\emptyset,|\}$ which is a \tl\ factor of \xt, and which we will call \emph{the system of markers}. 

The block occupying a gap in row number $k$ of $x$, i.e, the positions between two neighboring markers, will be called a $k$-block (occurring in $x$). The rectangle extending vertically through the top $k$ rows and horizontally between two neighboring markers of the $k$th row of $x$, will be called a $k$-rectangle (occurring in $x$; see Figure \ref{rect}). 
\begin{figure}[ht]
\[
\begin{array}{c}
\hline
\ldots\,\vline\, 0\,1\,\vline\,1\,1\,\vline\,\mathbf{0\,0}\,\vline\,\mathbf{0\,1}\,\vline\,\mathbf{0\,0}
\,\vline\,\mathbf{0\,1}\,\vline\,\mathbf{1\,0}\,\vline\,\mathbf{1\,1}\,\vline\,1\,0\,\vline\,1\,1\,\vline\,0\,1\,\vline\,1\ldots\\
\hline
\ldots\phantom{\,\vline\,} 1\,1\phantom{\,\vline\,}0\,1\,\vline\,\mathbf{1\,0}\phantom{\,\vline\,}\mathbf{1\,1}\phantom{\,\vline\,}\mathbf{1\,0}\,\vline\,\mathbf{0\,1}\phantom{\,\vline\,}\mathbf{1\,1}\phantom{\,\vline\,}\mathbf{1\,0}\,\vline\,1\,1\phantom{\,\vline\,}0\,0\phantom{\,\vline\,}1\,0\,\vline\,1\ldots \\
\hline
\ldots
\phantom{\,\vline\,}0\,1\phantom{\,\vline\,}0\,0\,\vline\,\mathbf{1\,1}\phantom{\,\vline\,}\mathbf{0\,1}\phantom{\,\vline\,}\mathbf{1\,0}\phantom{\,\vline\,}\mathbf{1\,0}\phantom{\,\vline\,}\mathbf{1\,1}\phantom{\,\vline\,}\mathbf{0\,0}\,\vline\,1\,1\phantom{\,\vline\,}1\,0\phantom{\,\vline\,}0\,1\phantom{\,\vline\,}1\ldots \\
\hline
\ldots
\phantom{\ }1\,1\phantom{\,\vline\,}0\,1\,\vline\ {0\,1}\phantom{i\,}{0\,0}\phantom{i\,}{0\,0}
\phantom{i\,}{0\,1}\phantom{i\,}{0\,1}\phantom{i\,}{0\,0}\,\phantom{!}0\,0
\phantom{\,\vline\,}1\,0\phantom{\,\vline\,}1\,1\phantom{\,\vline\,}0\ldots \\
\hline
\ldots
\end{array}
\]
\caption{The figure shows the first four rows (ordered from the top) of some array $x$ with the markers. 
The boldface symbols form a $3$-rectangle.\label{rect}}
\end{figure}
Once a system of markers is established, it is not difficult to produce a new system with new lower and upper bounds $\pmin_k\le\pmax_k$ of the gap lengths, satisfying 
\begin{enumerate}
	\item[(C)] $\lim_{k\to\infty}\tfrac{\pmin_k}{\pmax_k}=1.$
\end{enumerate}
The new system is obtained from the old one \emph{by subdividing}, i.e., by putting more markers in between the old ones in a way completely determined within each old $k$-rectangle. Here is how it is done: for each $k$ let $m_k$ be such that $m_k(m_k+1)\le \pmin_k$ (the old value). Then every $p\ge\pmin_k$ can be decomposed as a sum $am_k+b(m_k+1)$ with some nonnegative integers $a,b$. We let $a(p)$ and $b(p)$ be the choice of the parameters $a,b$ (for the given $p$) with the maximal possible parameter $a$. Now, in every array $x$ we subdivide each $k$-block (by putting more markers in row $k$) into $a(p)$ sub-blocks of length $m_k$ followed by $b(p)$ sub-blocks of length $m_k+1$, where $p$ is the length of $B$. When this is done, we need to perform the upward adjustment of the newly put markers. The maximal and minimal gaps after the adjustment
lie between $p'^{\mathsf{min}}_k=m_k-(m_1+\dots+m_{k-1}+k-1)$ and $p'^{\mathsf{max}}_k= m_k+1+(m_1+\dots+m_{k-1}+k-1)$. If the numbers $\pmin_k$ (and thus $m_k$) grow fast enough, the condition~(C) will be satisfied for the new system of markers. A system of markers satisfying~(C) will be called \emph{balanced}.
\smallskip

Given a system of markers, the family of all $k$-rectangles occurring in $X$ will be denoted by $\mathcal R_k(X)$. Notice that for $k=1$ $k$-rectangles are the same as $k$-blocks, while for $k>1$, every $k$-rectangle $R$ consists of a concatenation of some finite number $q$ (depending on $R$) of $(k\!-\!1)$-rectangles (this concatenation occupies the top $k-1$ rows) with a $k$-block $B$ appended in row number $k$ at the bottom. We will indicate this by writing
$$
R = \left[\begin{matrix} R^{(1)}R^{(2)}\dots R^{(q)}\\ B\end{matrix}\right].
$$

We will be using the following lemma, which is proved in the last paragraph of \cite{Se12} (although it is not isolated as a separate lemma). The additional property stated below can be easily derived from the construction in \cite{Se12}.
\begin{lem}\label{firstmarker}
Suppose \xt\ is given with a system of markers $(X_0,T_0)$. If $\pmin_{k+1}$ is at least three times larger than $\pmax_k$ for every $k$, then there exists an isomorphic symbolic extension $(Y_0,S_0)$ of $(X_0,T_0)$, over the alphabet $\{0,1\}$. The corresponding factor map has the additional property, that the coding range between $Y_0$ and the $k$-th row of $X_0$ (containing the $k$-markers) does not extend beyond three consecutive $k$-markers. That is, for every $y_0\in Y_0$, the position of every $k$-marker in the image $x_0$ of $y_0$ can be determined by viewing the block of $y_0$ extending at most between the  preceding and following $k$-markers in $x_0$.
\end{lem}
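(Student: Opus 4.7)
The plan is to build a binary coding $\phi: X_0 \to \{0,1\}^\Z$, declare $Y_0 = \overline{\phi(X_0)}$, and take as the factor map $\pi_0 : Y_0 \to X_0$ the continuous extension of the inverse of $\phi$. The coding $\phi$ will be designed recursively along the nested hierarchy of markers: for each $k\ge 1$ we fix a short pre-agreed binary ``level-$k$ signature'' $\sigma_k$ of length $O(k)$ and inscribe it, within each $k$-rectangle of $x$, at a designated position next to the left $k$-marker of the bottom $k$-block. Outside of these distinguished signature windows we fill the binary symbols according to a fixed rule that depends only on the local marker geometry, so that $\phi$ is a bounded-window function of $x$.

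The critical design principle is that signatures at different levels must be mutually distinguishable and must not overlap when superimposed inside a common block. The assumption $\pmin_{k+1}\ge 3\pmax_k$ is used exactly here, in two complementary ways. First, $\pmin_k$ grows at least exponentially in $k$, comfortably accommodating signatures of length $O(k)$ inside any $k$-block. Second, every $(k\!+\!1)$-block consists of at least three $k$-sub-blocks, so the level-$(k\!+\!1)$ signature can be inscribed adjacent to one of the two distinguished $(k\!+\!1)$-markers without clashing with the level-$k$ signatures that live in the interior $k$-sub-blocks. A convenient implementation is to reserve the rightmost $O(k\!+\!1)$ bits of the bottom row of every $k$-rectangle for a possible level-$(k\!+\!1)$ signature, and to use the rest of the row to carry recursively the codings of the constituent $(k\!-\!1)$-subrectangles.

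It then remains to check: (i) that $\phi$ is continuous and equivariant and that its inverse extends continuously to $Y_0$; (ii) that the resulting $\pi_0$ is an isomorphism of every $S_0$-invariant measure with its image; and (iii) the coding range property. Item (i) is immediate from the block-by-block, local nature of the coding. For (ii), the points of $Y_0\setminus \phi(X_0)$ arise only at ambiguous boundary configurations (e.g.\ one-sided arrays with a marker ``at infinity'' where the recursive parsing does not complete), and such configurations form a null set for every $S_0$-invariant measure; this is handled just as in the last paragraph of \cite{Se12}. For (iii), each level-$k$ signature is placed adjacent to a $k$-marker inside a $k$-block, so determining the precise locations of the $k$-markers in $x_0$ from $y_0$ requires only the $y_0$-block extending between the preceding and following $k$-markers in $x_0$---exactly as claimed.

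The main obstacle is the simultaneous unique parseability of all levels of signatures under the tight coding-range constraint. Crudely designed signatures can always be made parseable by spreading them over long intervals, but here each signature must be strictly localized within the $k$-block of its own level, since otherwise the decoding window would cross a $k$-marker and the coding range property would fail. It is precisely the hypothesis $\pmin_{k+1}\ge 3\pmax_k$ that makes this possible: the three sub-blocks it guarantees inside every $(k\!+\!1)$-block provide the ``slack'' needed to inscribe a level-$(k\!+\!1)$ signature without disturbing the lower-level signatures, keeping each level's signature inside its own block and meeting the coding range bound.
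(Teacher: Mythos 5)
Your proposal contains a fundamental gap in the claim that $\phi:X_0\to\{0,1\}^\Z$ ``is a bounded-window function of $x$,'' i.e.\ continuous. The system of markers $(X_0,T_0)$ is an array system with infinitely many rows, and higher rows carry information (which $k$-markers are also $(k+1)$-markers) not determined by the lower rows, so $(X_0,T_0)$ is not expansive and therefore not topologically conjugate to any subshift. A continuous injective equivariant map $\phi$ from the compact space $X_0$ into $\{0,1\}^\Z$ would be precisely such a conjugacy, so no such $\phi$ can exist. Concretely, the bit that $\phi$ writes near a marker of depth $d$ must depend on row $d$ of $x_0$, and arrays containing markers of unbounded (or infinite) depth near a fixed horizontal coordinate force the coding window to be unbounded in the vertical direction. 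Your own sketch betrays the inconsistency: if $\phi$ were continuous, $\phi(X_0)$ would be compact and hence closed, so $Y_0\setminus\phi(X_0)$ would be empty, yet you also assert this set is nonempty and consists of ``ambiguous boundary configurations.''

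The correct shape of the argument (which is what Serafin's construction, cited by the paper in place of a proof, supplies) is: $\phi$ is defined and injective only on an invariant subset $X_0'\subset X_0$ with null complement, is merely Borel there, and it is the inverse $\phi^{-1}$ that must be \emph{uniformly} continuous on $\phi(X_0')$ --- this is precisely what the coding-range clause in the lemma expresses. That uniform continuity then extends $\phi^{-1}$ to a genuine factor map $\pi_0:Y_0\to X_0$ on $Y_0=\overline{\phi(X_0')}$. You attribute the bounded-window property to $\phi$ when it should be attributed to $\pi_0$. Moreover, the ``main obstacle'' you yourself identify --- simultaneous unique parseability of all signature levels --- is acknowledged but never discharged: the argument that the three-subblock slack from $\pmin_{k+1}\ge 3\pmax_k$ suffices is asserted, not proved, and no rule for the filler bits is given that would prevent spurious signature patterns from appearing in points of $Y_0\setminus\phi(X_0')$, which is exactly where the isomorphism property could fail. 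These are the delicate details the paper delegates to \cite{Se12} rather than reproving.
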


\subsection{Small boundary property} A subset $A\subset X$ of a \tl\ \ds\ \xt\ is a \emph{null set} if it has measure zero for all $\mu\in\mtx$. The system \xt\ is said to have the \emph{small boundary property} if it admits a base of the topology consisting of sets whose boundaries are null sets. Equivalently, the space admits a refining \sq\ (see next line) of finite partitions into measurable sets with null boundaries. A \sq\ of partitions $\{\P_k:k\ge 1\}$ is \emph{refining} if $\P_{k+1}\succcurlyeq\P_k$ for each $k$ and $\diam(\P_k)\to 0$ in $k$, where $\diam(\P)$ denotes the maximal diameter of an atom of a partition $\P$. Obviously, any zero-dimensional system has the small boundary property. Small boundary property can be interpreted as the system being ``equivalent'' to a zero-dimensional one in the following sense:

\begin{fact}(see e.g. \cite{BD05})\label{Gu}
A \tl\ \ds\ \xt\ which has the small boundary property admits an isomorphic zero-dimensional extension \xtp. 
\end{fact}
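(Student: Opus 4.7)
The plan is to build $(X',T')$ as the closure of a coding of $X$ by an array of partition labels, and then use the small boundary hypothesis to make the coding an essential inverse of the factor map. Let $\{\P_k\}_{k\ge 1}$ be a refining sequence of finite measurable partitions of $X$ with $\diam(\P_k)\to 0$ and with null boundaries, provided by the small boundary property. Label the atoms by a finite alphabet $\Lambda_k$, so $\P_k=\{A_{k,i}:i\in\Lambda_k\}$, and let $\mathsf B$ be the countable union over $k,i,n$ of $T^{-n}(\partial A_{k,i})$; this is a $T$-invariant null set.

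Define $\phi:X\setminus\mathsf B\to\prod_k\Lambda_k^{\Z}$ by letting $\phi(x)_{k,n}$ be the label of the unique atom of $\P_k$ containing $T^n x$, and set $X'=\overline{\phi(X\setminus\mathsf B)}$ inside the Cantor space $\prod_k\Lambda_k^{\Z}$, with $T'$ the coordinatewise shift. Since the $\P_k$ are nested, for every $x'\in X'$ the family $\{\overline{A_{k,x'_{k,0}}}\}_k$ is a decreasing sequence of nonempty closed sets whose diameters tend to $0$; define $\pi(x')$ to be the unique point in this intersection. Then $\pi$ is continuous (each coordinate on $X'$ is locally constant), $\pi\circ T'=T\circ\pi$, and $\pi$ is surjective: any $x\in X$ is either in $X\setminus\mathsf B$, where $\pi(\phi(x))=x$, or is a limit of such points $x_j$, and a convergent subsequence of $\phi(x_j)\in X'$ produces a preimage. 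By construction $X'$ is compact and zero-dimensional.

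The key claim is that $\pi:(X',T')\to(X,T)$ is isomorphic on every $\mu\in\mtx$. Since $\mu(\mathsf B)=0$, $\phi$ is defined $\mu$-a.e., equivariant, and measurable, so $\mu':=\phi_*\mu$ is a $T'$-invariant Borel probability on $X'$ with $\pi_*\mu'=\mu$. For uniqueness, take any $\nu\in\pi^{-1}(\mu)$ and put $E'=\pi^{-1}(X\setminus\mathsf B)$, which is $T'$-invariant and of $\nu$-measure one. For $x'\in E'$ and each $k,n$, the point $T^n\pi(x')=\pi(T'^n x')$ lies in $\overline{A_{k,x'_{k,n}}}$ but avoids all boundaries, hence lies in $A_{k,x'_{k,n}}$; equivalently, $x'_{k,n}$ is forced to be the label of the unique atom of $\P_k$ containing $T^n\pi(x')$. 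Thus on $E'$ one has both $\phi\circ\pi=\id$ and $\pi\circ\phi=\id$, so $\pi:E'\to X\setminus\mathsf B$ is a bimeasurable bijection with inverse $\phi$, whence $\nu=\phi_*\mu=\mu'$.

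The main obstacle is this last step, where one must upgrade the a priori weak topological inclusion $\pi(x')\in\overline{A_{k,x'_{k,0}}}$ (all that the closure defining $X'$ directly provides) to an honest measurable bijection modulo null sets. This is exactly where the small boundary property is indispensable: without null boundaries, several labels could correspond to the same point $\pi(x')$, the typical fiber of $\pi$ could carry nontrivial conditional entropy, and the isomorphism on measures would break down.
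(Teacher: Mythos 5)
Your proof is correct, and it is essentially the standard argument found in the cited reference (the paper itself does not reproduce a proof but refers to \cite{BD05}). You encode each point by its labels with respect to a refining sequence of small-boundary partitions, take the shift-closure of the image to get a compact zero-dimensional $X'$, recover $\pi$ from the nested closed atoms, and then exploit the null-boundary hypothesis to show that $\phi$ and $\pi$ are mutually inverse bijections between co-null invariant sets, so every invariant measure on $X$ has a unique (and measure-isomorphic) lift. Two small points are worth making explicit: (i) density of $X\setminus\mathsf B$ is needed for surjectivity of $\pi$, and this follows not because $\mathsf B$ is null but because it is meager (a countable union of closed nowhere dense sets, since each $T^{-n}(\partial A_{k,i})$ is closed with empty interior); (ii) when you write $\nu=\phi_*\mu=\mu'$, the cleanest justification uses Lusin--Souslin to see that $\pi|_{E'}$ is a Borel isomorphism onto the Borel set $X\setminus\mathsf B$, so pushforward along the bijection is legitimate. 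Neither point affects the validity of the argument, which is sound as written.
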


We recall that a \tl\ factor map $\pi:(X',T')\to(X,T)$ is said to be \emph{isomorphic} if the map $\pi$ is a bijection after discarding some null sets from both $X$ and $X'$. Equivalently, the adjacent map $\pi:\mtxp\to\mtx$ is a bijection (and then an affine homeomorphism) and, for every $\mu'\in\mtxp$ and $\mu = \pi(\mu')$, the standard measure-theoretic systems $(X',\Sigma_{\mu'},\mu',T')$ and $(X,\Sigma_\mu,\mu,T)$ are measure-theoretically isomorphic via the map $\pi$ ($\Sigma_{\mu'}$ and $\Sigma_\mu$ denote the completed Borel sigma-algebras in respective spaces). As we have already mentioned, it is unknown whether the implication in Fact \ref{Gu} can be reversed.

The theorem below follows from works of  E. Lindenstrauss. It says that the class of systems with small boundary property is quite large. Clearly the most interesting for us class of systems admitting an isomorphic zero-dimensional extension is even larger (at least not smaller).
\begin{thm}
If \xt\ has finite \tl\ entropy and has a \tl\ factor which is minimal and aperiodic then \xt\ has the small boundary property.
\end{thm}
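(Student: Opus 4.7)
This is a theorem of E.\ Lindenstrauss, and the route I would take is the mean-dimension approach he developed. The target is to produce, for every $\epsilon > 0$, a finite measurable partition of $X$ with atoms of diameter less than $\epsilon$ and null boundaries (zero measure for every $T$-invariant probability); stringing such partitions together as $\epsilon \to 0$ yields a refining sequence which witnesses the small boundary property.

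First I would reduce the claim to showing that $(X,T)$ has \emph{mean dimension zero}. The aperiodic minimal factor $\pi\colon(X,T)\to(Y,S)$ furnishes a topological Rokhlin structure: by the marker lemma for minimal aperiodic systems (in essentially the form already used in Section~2 for zero-dimensional systems), for every $N$ there is a clopen $V_N\subset Y$ whose iterates $V_N,SV_N,\dots,S^{N-1}V_N$ are pairwise disjoint with bounded return time, and the preimages $\widetilde V_N=\pi^{-1}(V_N)$ give Rokhlin towers in $X$. Comparing the $\epsilon$-widim of the partition induced by a finite open cover restricted to a tower column of height $N$ with the $(n,\epsilon)$-separation numbers in $(X,T)$, finite topological entropy yields a bound of the form $\mathrm{mdim}(X,T)\le C\cdot h_{\mathrm{top}}(X,T)/\log N$, and letting $N\to\infty$ forces $\mathrm{mdim}(X,T)=0$.

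Second I would invoke Lindenstrauss's theorem that mean dimension zero, in the presence of a topological Rokhlin structure (the ``marker property'', guaranteed here by the aperiodic minimal factor), implies the small boundary property. The construction assembles the desired small-diameter, null-boundary partition of $X$ by a Voronoi-type cut of each tower column: mean dimension zero makes it possible, for every $\epsilon$, to choose an open cover of diameter less than $\epsilon$ whose $\epsilon$-widim grows sublinearly along the tower, which forces the fraction of each column sitting near the boundary to vanish uniformly as $N\to\infty$. By the standard duality between orbit-visit frequencies and invariant measures, a set with uniformly vanishing visit frequency along arbitrarily tall Rokhlin towers is null for every $T$-invariant measure.

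The main obstacle is precisely this last step: controlling the boundary measure \emph{simultaneously for all} invariant measures, rather than merely for a single ergodic one. Finiteness of topological entropy is what rules out pathological behaviour here---without it the tower columns could carry too much combinatorial variation, and no single perturbation of the cover would yield null boundaries across the whole Choquet simplex of invariant measures. The aperiodicity of the minimal factor, in turn, is what lets the Rokhlin towers be taken arbitrarily tall, so that both the widim/entropy trade-off in the first step and the Voronoi perturbation in the second step have enough room to work.
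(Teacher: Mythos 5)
The paper gives no proof of this theorem; it is stated as a citation to Lindenstrauss, so there is no in-text argument to compare against. Your route --- finite topological entropy implies mean dimension zero, and mean dimension zero together with an aperiodic minimal factor (which supplies the marker property, hence a topological Rokhlin structure) implies the small boundary property --- is precisely the intended Lindenstrauss argument, and the overall architecture you describe is sound.

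One detail in your first step is off, however. You bring the Rokhlin towers from the minimal factor into the derivation of $\mathrm{mdim}(X,T)=0$ and propose a bound $\mathrm{mdim}(X,T)\le C\,h_{\mathrm{top}}(X,T)/\log N$ with $N$ a tower height, suggesting that minimality is already needed here. It is not. The Lindenstrauss--Weiss reduction of mean dimension to entropy is a pure counting argument with no dynamical structure beyond the metric: the metric mean dimension satisfies, for every $\epsilon>0$, the estimate $\liminf_n \tfrac1n\log S(n,\epsilon)\le h_{\mathrm{top}}(X,T)$, so dividing by $|\log\epsilon|$ and letting $\epsilon\to0$ forces $\mathrm{mdim}_M(X,d,T)=0$, and $\mathrm{mdim}\le\mathrm{mdim}_M$ finishes it. No towers, no markers, no minimal factor appear in this step; they are needed only in the second step, where the null-boundary cover is actually assembled column by column. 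As written, your sketch conflates the two steps and the specific inequality you display does not arise from the standard argument; I would drop the reference to tower heights there and quote the metric-mean-dimension bound directly.
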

\begin{rem}It is unknown whether the existence of an aperiodic minimal factor can be relaxed by only assuming aperiodicity of \xt.
\end{rem}

\begin{rem}Many systems with periodic points also have the small boundary property. For instance, it is so when the system is finite-dimensional while the subset of periodic points has dimension zero (see \cite{Ku95}).
The latter condition holds for example if there are only countably many periodic points, a case which we will exploit most.
\end{rem}

\subsection{Entropy structure and superenvelopes}\label{ss} In a general \tl\ \ds\ the entropy structure is a complicated notion. We will only use the simplified version valid in zero-dimensional systems. Let \xt\ be a zero-dimensional \ds\ represented as the inverse limit of subshifts $(X_k,T_k)$. The \emph{entropy structure} is the \sq\ of functions $\{h_k\}_{k\ge 1}$ defined on $\mtx$ by 
$$
h_k(\mu) = h_{\mu_k}(T_k),
$$ 
where $\mu_k = \pi_k(\mu)$ (here $\pi_k$ is the factor map from $X$ onto $X_k$). The entropy structure has the following properties:
\begin{itemize}
	\item Each $h_k$ is an affine upper semicontinuous function and so are the differences $h_{k+1}-h_k$.
	\item The functions $h_k$ converge (pointwise) nondecreasingly to the entropy function $h$.
\end{itemize}
One of the central notions in the theory of symbolic extensions is that of a \emph{superenvelope of the entropy structure} (or just \emph{superenvelope} for short). This term applies to any function $E$ on $\mtx$ such that $E-h_k$ is nonnegative and upper semicontinuous for all $k\ge 1$. We also admit the constant infinity function as a superenvelope. Every finite superenvelope (if it exists, which is not guaranteed) is upper semicontinuous, hence bounded from above.
Also $E-h$ is upper semicontinuous. 
The pointwise infimum of all superenvelopes is again a superenvelope and we call it the \emph{minimal superenvelope} $\eh$. Clearly, $\eh\ge h$. It is known (see \cite{Do05}) that the equality holds if and only if the entropy structure converges to $h$ uniformly, which is equivalent to the system \xt\ being asymptotically $h$-expansive (in the sense of Misiurewicz, see \cite{Mi76}). Among finite superenvelopes the most important are \emph{affine superenvelopes}, i.e., superenvelopes which are affine functions on $\mtx$. It turns out that $\eh$ equals the pointwise infimum of all affine superenvelopes, hence it is concave, and only sometimes affine. In particular, it is affine in asymptotically $h$-expansive system (because so is $h$).

\subsection{Relations between symbolic extensions and superenvelopes}
The key theorem in the theory of symbolic extensions is the \emph{Symbolic Extension Entropy Theorem}
\cite{BD05}:
\begin{thm}\label{env}
A function $E$ on $\mtx$ equals $h^\pi$ for some symbolic extension if and only if it is an affine superenvelope of the entropy structure of \xt. 
\end{thm}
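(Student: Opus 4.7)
The plan is to establish the two implications separately; necessity is the soft direction and sufficiency carries the substantive construction.

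For necessity, let $\pi:(Y,S)\to(X,T)$ be a symbolic extension and set $E=h^\pi$. Affinity of $E$ on $\mtx$ follows from the ergodic decomposition together with the affine behaviour of entropy on fibres. Non-negativity of $E-h_k$ is immediate from $E\geq h\geq h_k$. For upper semicontinuity of $E-h_k$, I would exploit the fact that $\pi_k\circ\pi:Y\to X_k$ exhibits $(Y,S)$ as a symbolic extension of the subshift $(X_k,T_k)$, so that
$$E(\mu)-h_k(\mu)=\sup_{\nu\in\pi^{-1}(\mu)}\bigl(h_\nu(S)-h_{(\pi_k\circ\pi)(\nu)}(T_k)\bigr);$$
the conditional entropy inside the supremum is a USC function of $\nu$ on $\msy$, a standard property of factor maps between subshifts, and since the fibre $\pi^{-1}(\mu)$ varies upper hemicontinuously in $\mu$ and is compact, the supremum is USC in $\mu$.

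For sufficiency, given an affine superenvelope $E$ of $\{h_k\}$, the goal is to construct a symbolic extension $\pi$ with $h^\pi=E$. Set $f_k=E-h_k$; each $f_k$ is a non-negative affine USC function on $\mtx$. I would build $(Y,S)$ as the inverse limit of a tower of finite-alphabet extensions $Y_k\to(X_k,T_k)$ using a balanced system of markers from Section \ref{zedysy}. At level $k$, inside each $k$-rectangle of the marker system one stores an auxiliary label drawn from an alphabet whose cardinality is tuned so that, averaged under any invariant measure $\mu$, the extra per-unit-time entropy equals $f_k(\mu)$; the maximal entropy of $Y_k$ above $\mu$ is then $h_k(\mu)+f_k(\mu)=E(\mu)$. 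Affineness of $f_k$ is essential here, because it allows a single combinatorial prescription, depending only on the topological data of the rectangles, to yield the correct integrated contribution under every $\mu$ simultaneously via the ergodic decomposition. Lemma \ref{firstmarker} then recodes the inverse limit as a subshift over $\{0,1\}$.

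The principal obstacle is the two-sided verification of $h^\pi=E$, uniformly in $\mu$. For the upper bound $h^\pi\leq E$, I would estimate the entropy of an arbitrary lift $\nu$ as $h_\nu(S)=\sup_k h_{\nu_k}(S_k)$, and bound each $h_{\nu_k}(S_k)$ by a counting argument on the $k$-rectangles, using that the coding-layer cardinalities were chosen so that the resulting bound is at most $E(\mu)+o(1)$ as $k\to\infty$. For the matching lower bound $h^\pi\geq E$, I would exhibit an explicit lift by sampling auxiliary labels independently and uniformly on each rectangle, and then compute $h_\nu(S)$ via the Shannon--McMillan--Breiman theorem applied along the marker tower. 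The real difficulty is producing estimates that are uniform in $\mu$, and it is here that the superenvelope property $E\geq h_k$ on all of $\mtx$, together with USC of each $f_k$, becomes indispensable: it permits reducing, via simplicial approximation and affine decomposition, the verification to a compact parameter family of ergodic pieces on which the combinatorial choices are explicit and whose entropy contributions reassemble affinely to $E(\mu)$.
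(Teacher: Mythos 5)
The paper does not prove Theorem~\ref{env}; it is cited directly from \cite{BD05} (Boyle--Downarowicz, the ``Symbolic Extension Entropy Theorem''), so there is no in-paper proof to compare against. That said, your sketch does track the structure of the argument in the literature, and the necessity half is essentially right: writing $E(\mu)-h_k(\mu)$ as a supremum of relative entropies over the fiber $\pi^{-1}(\mu)$ and invoking USC of conditional entropy for subshift factor maps, together with upper hemicontinuity and compactness of fibers, is the standard route.

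The sufficiency half, however, has a genuine gap in the step where you pass from the tower of finite-alphabet extensions $Y_k\to(X_k,T_k)$ to a symbolic extension of the inverse limit. As you describe it, at each level $k$ you attach an auxiliary label to each $k$-rectangle so that the per-unit-time extra entropy is $f_k(\mu)=E(\mu)-h_k(\mu)$; but an inverse limit of independently chosen finite-alphabet systems is not itself a subshift. For the resulting $Y=\varprojlim Y_k$ to be symbolic, the coding layers at successive levels must be over one \emph{fixed} finite alphabet $\Lambda$ and must be \emph{nested}: the family of labels admissible over a $k$-rectangle must be selected from concatenations of the labels of its constituent $(k\!-\!1)$-rectangles, with cardinalities $\mathcal O_k(R)$ satisfying the oracle inequalities \eqref{or1}--\eqref{ork}. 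That nesting constraint is exactly what makes the construction nontrivial and is what forces the size of $\Lambda$ and the growth of the marker scales $\pmin_k$ to be coordinated with all of the $f_k$ simultaneously, not one $k$ at a time. Your last sentence gestures at ``simplicial approximation and affine decomposition'' to get uniformity over $\mu$, but without the oracle inequalities there is no mechanism for the level-$k$ combinatorics to be consistent with the level-$(k\!+\!1)$ combinatorics, and the asserted bound $h^\pi(\mu)\le E(\mu)$ cannot be closed. Relatedly, your upper-bound estimate ``$h_{\nu_k}(S_k)\le E(\mu)+o(1)$'' presumes $Y_k$ is a decreasing sequence of subshifts over a common alphabet, which is precisely what the construction must arrange and your sketch does not. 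Filling these in is the substantive content of \cite{BD05} (and of the summary reproduced in the proof of Theorem~\ref{main} in this paper), so the proposal as written is an accurate statement of intent rather than a proof.
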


Here are some immediate consequences of the theorem combined with some facts stated earlier.
\begin{itemize}
	\item $\hsex=\eh$	
	\item $\eh=h^\pi$ in some symbolic extension if and only if $\eh$ is affine.
	\item $\hsex = h$ if and only if \xt\ is asymptotically $h$-expansive. 
	This happens if and only if there exists a \emph{principal} symbolic extension 
	$\pi:(Y,S)\to(X,T)$, i.e., such that $h_\mu(T) = h_\nu(S)$ whenever $\mu = \pi(\nu)$.
	\item \xt\ admits no symbolic extensions if and only if the constant infinity function is the only 
	superenvelope.
\end{itemize}

The above theorem has been refined by J. Serafin in \cite{Se12}: the symbolic extension realizing a finite affine superenvelope $E$ (as $h^\pi$) can be \emph{faithful}, i.e., such that the map $\pi:\msy\to\mtx$ is injective (hence a homeomorphism). In such case $E(\mu)$ equals $h_\nu(S)$ where $\nu$ is the unique preimage of $\mu$.

\subsection{$\mathsf d$-bar distance}
In Theorem \ref{main} of the next section we will use the $\mathsf d$-bar distance introduced by Ornstein \cite{Or74}. By a joining of two measure-preserving systems, we mean a probability measure on the product space  invariant with respect to the product transformation, whose  coordinate projections equal to the original measures. Let $(Y,S)$ be a subshift endowed with two invariant measures $\mu$ and $\nu$. We denote by $J(\mu,\nu)$ the set of all joinings of $\mu$ and $\nu$.  Then the $\mathsf d$-bar distance $\overline{\mathsf d}(\mu,\nu)$ is defined as follows:
$$
\overline{\mathsf d} (\mu,\nu)=\min_{\lambda\in J(\mu,\nu)} \int_{Y\times Y}\mathds{1}_{x_0\neq y_0}(x,y)\,d\lambda(x,y).
$$ 
We recall that the $\mathsf d$-bar distance is stronger than the weak-star topology in the sense that a $\overline{\mathsf d}$-convergent sequence is also weakly-star converging (to the same limit).

By a standard argument, the $\overline{\mathsf d}$-distance has the following convexity property:
for any $\mu,\nu\in\msy$,
\begin{equation}\label{dbar}
\overline{\mathsf d}(\mu,\nu) \le \int\int \overline{\mathsf d}(\xi,\eta)\,dM_\mu(\xi)\,dM_\nu(\eta),
\end{equation}
where $\mu=\int \xi\,dM_\mu(\xi)$ and $\nu=\int \eta\,dM_\nu(\eta)$ are the ergodic 
decompositions of $\mu$ and $\nu$, respectively.

\section{Symbolic extensions with an embedding of aperiodic systems}
We will prove the following refinement of the Symbolic Extension Entropy Theorem:
\begin{thm}\label{main}
Let \xt\ be an aperiodic zero-dimensional \ds\ and let $E$ be an affine superenvelope 
of the entropy structure of $X$. Then there exists a symbolic extension $\pi:(Y,S)\to(X,T)$ such that
\begin{itemize}
	\item $h^\pi = E$.
	\item $\diam(\pi^{-1}(\mu))\le E(\mu)-h(\mu)$ for the $\mathsf d$-bar distance on $\msy$.
	\item There exists an equivariant measurable map $\psi:X\to Y$ such that $\pi\circ\psi = \id_X$ (i.e., $\psi$ is a selector from the preimages of $\pi$; such map is necessarily injective), that is to say, $\pi$ is a symbolic extension with an embedding.
\end{itemize}
\end{thm}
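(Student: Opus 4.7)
The strategy is to modify the marker-based construction of a faithful symbolic extension due to Serafin \cite{Se12} so that the label at each level can be chosen canonically and equivariantly, yielding the required embedding $\psi$. First I would put \xt\ in an array representation equipped with a balanced system of markers satisfying $\pmin_{k+1}\ge 3\pmax_k$ and condition (C) of Subsection \ref{zedysy}, and apply Lemma \ref{firstmarker} to obtain an isomorphic binary coding $(Y_0,S_0)$ of the system of markers $(X_0,T_0)$. Because this coding is determined by a bounded window of $k$-markers, it provides, once and for all, a canonical equivariant encoding of the marker pattern of any $x\in X$ into the extension, and its inverse (from $Y_0$ back to $X_0$) is continuous, so both encoding and decoding of markers commute with the shift.

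Next, I would inductively attach labels at each level. Using the fact that $E$ is an affine superenvelope, choose an increasing sequence $\{f_k\}$ of affine \usc\ functions with $f_k\ge h_k$ and $f_k\nearrow E$ pointwise on $\mtx$. For every $k$-rectangle $R$ appearing in $X$ allocate a collection $\Sigma_k(R)$ of labels, attached inside the $k$-th row of $R$ in positions disjoint from lower-level labels, over an alphabet of size at least three, with cardinality calibrated so that the per-symbol entropy contribution at level $k$ equals $f_k-f_{k-1}$ on every invariant measure; the telescoping sum of label densities across all levels then does not exceed $E-h$. Let $Y$ be the closure of the set of bilateral sequences obtained by merging the marker encoding from $Y_0$ with consistent labels of all levels, and let $\pi:Y\to X$ be the natural decoding map. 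The embedding $\psi:X\to Y$ is defined by selecting, in every $k$-rectangle $R$ of $x$, the lexicographically smallest label in $\Sigma_k(R)$; it is equivariant (since each coordinate of $\psi(x)$ depends only on the rectangle structure around the corresponding position), measurable, and satisfies $\pi\circ\psi=\id_X$.

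For the equality $h^\pi=E$, the upper bound follows from counting labels (the per-symbol contributions at level $k$ telescope to $E-h_0=E$), while the lower bound, as in \cite{Se12}, is obtained by lifting a measure $\mu\in\mtx$ through a random choice of labels that realizes the full label entropy. For the $\overline{\mathsf d}$-bound, any two preimages $\nu,\nu'$ of the same $\mu$ must agree on the marker structure (by isomorphism of $(Y_0,S_0)$ with $(X_0,T_0)$) and on the identity of the rectangle at every position (by definition of $\pi$); they can disagree only at label positions, hence their $\overline{\mathsf d}$-distance is bounded by the density of those positions, which by construction is at most $E(\mu)-h(\mu)$. The main difficulty I anticipate is the simultaneous calibration of all levels so that (i) labels at level $k$ fit inside $k$-rectangles without overlapping markers or lower-level labels, (ii) $h^\pi$ equals $E$ pointwise and not merely from above, and (iii) the coding density is tight enough to yield exactly $E-h$ rather than a larger multiple; achieving (iii) forces the coding alphabet to have size at least three at label positions, so that one unit of density carries at least one nat of entropy, and the balanced marker system ensures that boundary corrections between consecutive rectangles are asymptotically negligible.
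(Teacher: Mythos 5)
You correctly identify the overall strategy---adapt the marker-based standard construction \cite{Se12} so that a canonical, equivariant selector $\psi$ from preimages can be defined---and your use of Lemma \ref{firstmarker} and a balanced system of markers matches the paper. But the critical ingredient is missing, and without it your selection rule does not in general produce an element of $\pi^{-1}(x)$. When you ``select, in every $k$-rectangle $R$ of $x$, the lexicographically smallest label in $\Sigma_k(R)$,'' you implicitly assume these per-level choices cohere, i.e., that for a $k$-rectangle $R=[R^{(1)}\cdots R^{(q)};B]$ the smallest member of $\mathcal F_k(R)$ is the concatenation of the smallest members of $\mathcal F_{k-1}(R^{(1)}),\dots,\mathcal F_{k-1}(R^{(q)})$. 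This cannot be arranged in general: the families $\mathcal F_k(R)$ for $k$-rectangles sharing the same top rows but with different bottom blocks $B$ must be \emph{disjoint} subsets of the admissible concatenations, so the smallest concatenation can lie in $\mathcal F_k(R)$ for at most one $B$; for the $R$ actually occurring in $x$ the smallest concatenation will typically not be available, and the per-level selections do not stabilize to a single sequence. The paper's resolution is the \emph{prefix-partition} device (Lemma \ref{pff}): each family is structured as ``determined symbols on fixed positions $+$ all possible symbols on free positions,'' and at every inductive step the positions newly fixed (depending on $B$) form a prefix of the previously free ones, so the free-position sets decrease with $k$. The distinguished preimage---the determined symbols on every eventually-fixed position, zeros on positions free at all levels---then lies in $\mathcal F_k(R)$ for \emph{every} $k$, which is exactly what makes $\psi$ well-defined, measurable and equivariant, and simultaneously bounds the $\overline{\mathsf d}$-diameter of each fiber by the density of forever-free positions, which the oracle calibration equates with $E(\mu)-h(\mu)$. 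Absent this structure, neither the embedding nor the $\overline{\mathsf d}$-bound is justified.

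Two minor additional issues: your entropy calibration via a sequence $\{f_k\}\nearrow E$ of affine functions is too coarse---the telescoping increments sum to $E-f_0$ rather than $E-h$, and, more importantly, realizing those increments by integer family cardinalities satisfying the nested disjointness constraints is precisely what the oracle inequalities \eqref{or1}--\eqref{ork} encode, which your sketch bypasses entirely. Also, the assertion that the label alphabet must have cardinality at least three so that ``one unit of density carries at least one nat of entropy'' is spurious: the paper enlarges the alphabet only so that oracle values become integer powers of $\#\Lambda$, a bookkeeping step unrelated to any three-symbol threshold or to nats.
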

The proof is provided at the end of this section. For now, let us list some consequences of the theorem. 

\begin{enumerate}
	\item The last condition implies that every measure $\mu\in\mtx$ has, in its fiber $\pi^{-1}(\mu)$, at 
	least one element, namely $\nu = \psi(\mu)$, such that $(X,\Sigma_\mu,\mu,T)$ and 
	$(Y,\Sigma_\nu,\nu,S)$ are measure-theoretically isomorphic.
	\item If $E(\mu) = h(\mu)$ then $\pi^{-1}(\mu) = \{\nu\}$, where $\nu$ is as above. In other words, $\pi$ 	is faithful and isomorphic on such measures.\footnote{In many situations, the equality $E(\mu) = h(\mu)$ holds on a large set of \im s. For instance, it is known that $\eh = h$ on a residual subset of $\mtx$ and whenever $\eh$ is affine (which is always the case for example whenever the set of ergodic measures is closed), then it is the most natural choice for $E$. More details can be found in \cite{BD05} or \cite{Do05}.}
	\item If \xt\ is asymptotically $h$-expansive then, taking $E=h$, we obtain a symbolic extension which is 
	isomorphic. Because an isomorphic extension is obviously principal (preserves the entropy of each \im),
	we obtain that the following conditions are equivalent in the class of aperiodic zero-dimensional 	
	systems: 
	\begin{itemize}
	\item \xt\ is asymptotically $h$-expansive,
	\item	\xt\ admits an isomorphic symbolic extension. 
	\end{itemize}
	This recovers a result from \cite{Bu16} which sheds a new light on how close asymptotic $h$-expansive 
	systems are to symbolic systems.
	\item If \xt\ is not asymptotically $h$-expansive then the extension described in the theorem cannot be 
	faithful; each measure $\mu\in\mtx$ for which $E(\mu)>h(\mu)$ (and we assume that there are such 
	measures) has in its fiber at least two elements: the measure $\nu$ isomorphic to $\mu$ (hence with 
	entropy $h(\mu)$) and another measure with entropy equal to $E(\mu)$ (since the fiber of $\mu$ is a 
	compact subset of $\msy$, and the entropy function in a symbolic system is upper semicontinuous, the
	supremum $E(\mu)$ of entropy over the fiber is attained). However, the $\mathsf d$-bar distance between these two 
	measures is small if $E(\mu)$ is close to $h(\mu)$.
\end{enumerate}

\begin{rem}\label{smb}The zero-dimensionality assumption in Theorem \ref{main} can be replaced by the property of admitting an isomorphic zero-dimensional extension (in particular, it suffices to assume
the small boundary property). The map $\psi$ will then be defined except on a null set, but then we can use Remark \ref{me} to prolong it. 
\end{rem}

\begin{rem}\label{Hoch}
Our Theorem \ref{main} refines (under the assumptions of the theorem) the result by Hochman \cite{Ho13} on the existence of a finite generator simultaneous for all aperiodic ergodic measures in any Borel \ds\ with finite entropy. However, there is a price for having a uniform generator rather than the simultaneous generator of Hochman. The Theory of Symbolic Extensions (enhanced by this work) implies that a uniform generator must have cardinality at least $2^{\hsex(X,T)}$ \footnote{Throughout this paper we calculate all entropies using the logarithm to base 2 (we will write just ``$\log$'', but in most cases we will not simplify $\log 2$).} and its existence is excluded in systems not admitting symbolic extensions. Hochman's simultaneous generators exist in any aperiodic\footnote{Recently, Hochman extended the result also to systems with periodic points \cite{Ho16}.} \tl\ \ds\ \xt\ and the smallest integer strictly larger than $2^{\htop(X,T)}$ suffices to be the cardinality. 
\end{rem}

The proof of Theorem \ref{main} relies heavily on the original construction of a symbolic extension realizing a given affine superenvelope $E$, as described in \cite{BD05} (see also \cite[Section 9.2]{Do11}). That construction, which we will call \emph{standard}, allows for an almost complete freedom in choosing the family of ``preimage-blocks'' for a $k$-rectangle, as long as the cardinality of this family is kept within a correct range. We will just be more specific about this choice, so that our construction is not even a modification but  (up to a small detail) a \emph{particular case} of the original one. Similar strategy was used by J. Serafin in his construction of a faithful symbolic extension in \cite{Se12}. 
The ``small detail'' which differentiates our construction from the original is the same as used in Serafin's proof: The original construction, which did not attempt to minimize the size of the fiber of a measure, started with replacing the system by its direct product with an odometer. This trick allowed to have a very regular system of markers (all $k$-blocks had the same length), for the price of possibly producing multiple (and usually non-isomorphic) lifts of many \im s, already in this initial step. Like in Serafin's proof, we cannot afford such an extravagance. This is the reason why we are assuming aperiodicity and then we use the natural system of markers built (as a \tl\ factor) into our system. This seemingly affects the entropy estimates (not the construction itself). Fortunately, as we have already remarked, we can always use a balanced system of markers (see condition (C) above) and then the entropy estimates can be conducted as if all $k$-blocks had constant length.

\begin{proof}[Proof of Theorem \ref{main}]
The following two paragraphs summarize the general construction of a standard symbolic extension realizing an affine superenvelope. There is almost nothing new here compared to \cite{BD05} or \cite{Do11}, up to modifications of the system of markers described in \cite{Se12}. 
\smallskip

We start with the system \xt\ given in an array representation. Before we even introduce in \xt\ a system of markers, we need to establish the \sq\ $\{\pmin_k\}$ bounding from below the lengths of the $k$-rectangles. This is done with reference to the relative complexities of the top $k$-row factors and to the given affine superenvelope $E$. The restrictions concern only the speed of growth, more precisely, they establish lower bounds for the values of $\pmin_k$ and of the ratios $\frac{\pmin_{k+1}}{\pmin_k}$.\footnote{Technically, we should bound the ratios $\frac{\pmin_{k+1}}{\pmax_k}$, but since we always have $\pmax_k\le 3\pmin_k$, it suffices to bound the ratios $\frac{\pmin_{k+1}}{\pmin_k}$.} Any sufficiently fast growing \sq\ will serve. Precise inequalities which must be fulfilled are provided e.g. in \cite[Section 9.2]{Do11}. At this point we introduce in \xt\ a balanced system of markers with the bounds $\pmin_k$ and $\pmax_k$. Next, the affine superenvelope $E$ allows us also to determine a finite alphabet $\Lambda$ (we select one element of the alphabet and call it ``zero'') and define an \emph{oracle}, i.e., a \sq\ of functions $\mathcal O_k:\R_k(X) \to \N$, each on the set of all $k$-rectangles appearing in $X$, and satisfying the \emph{oracle inequalities}:
\begin{equation}\label{or1}
\sum_{B\in\mathcal R_1} \mathcal O_1(B) \le (\#\Lambda)^{\pmin_1},
\end{equation}
and, for each $k>1$,
\begin{equation}\label{ork}
\sum_B \mathcal O_k\left(\left[\begin{matrix}
 R^{(1)}R^{(2)}\dots R^{(q)}\\ B
\end{matrix}\right]\right) \le \mathcal O_{k-1}(R^{(1)})\mathcal O_{k-1}(R^{(2)})\cdots\mathcal O_{k-1}(R^{(q)}), 
\end{equation}
where the sum ranges over all $k$-rectangles occurring in $X$ and having, in the top $k-1$ rows, a fixed concatenation $R^{(1)}R^{(2)}\dots R^{(q)}$ of $(k\!-\!1)$-rectangles. The precise values of the oracle depend in a specific way on the superenvelope $E$ (and also on the entropy structure), which in this paper we will skip describing. The interested reader is referred to \cite{BD05} or the book \cite{Do11}. 

Once the oracle is established, one produces a decreasing \sq\ of subshifts $Y_k\subset(\{0,1\}\times\Lambda)^\Z$, together with \tl\ factor maps $\rho_k:Y_k\to X_k$. 
Moreover, the following diagram commutes
$$
\begin{CD}
Y_1 & @<<\id< & 
Y_2 & @<<\id< & 
Y_3 & @<<\id< & \dots\\
@VV\rho_1V & & @VV\rho_2V & & @VV\rho_3V & & \\
X_1 & @<<\pi_1< & X_2 & 
@<<\pi_2< & X_3 & @<<\pi_3< & \dots,
\end{CD}
$$
so the inverse limit $Y$ of the $Y_k$'s (which is simply their intersection, hence a subshift) is a symbolic extension of the inverse limit of the $X_k$'s, i.e., of $X$, via the map $\pi=\lim_k\rho_k$, which on $Y$ happens to be a uniform limit. The alphabet used in each $Y_k$ is $\Lambda^* = \{0,1\}\times\Lambda$
and the subshift is imagined as having two rows: the first row (over $\{0,1\}$) is denoted $Y_0$ and it matches the isomorphic symbolic extension of the system of markers $(X_0,T_0)$ given by Lemma \ref{firstmarker}. The second row in $Y_k$ is more subtle. The rule behind filling the second row (and connecting the oracle with the maps $\rho_k$) is, that with each $k$-rectangle $R$ occurring in $X_k$ (equivalently, in $X$) we associate a ``list'' $\mathcal F_k(R)$ consisting of precisely $\mathcal O_k(R)$ blocks over $\Lambda$ (of the same length as $R$). The families should be disjoint for different $k$-rectangles. Then the preimages by $\rho_k$ of $x_k\in X_k$ have in the second row all possible \sq s $y$ over $\Lambda$ such that ``above'' (i.e., at the same horizontal coordinates as) every $k$-rectangle $R$ in $x_k$ there appears in $y$ a block from $\mathcal F_k(R)$. The map $\rho_k$ is the block-code which replaces each $k$-block $B$ in $(y_0,y)\in Y_k$ (the first row $y_0$ allows to determine the parsing into the $k$-blocks) by the unique $k$-rectangle $R$ such that $B\in\mathcal F_k(R)$. The commutation of the diagram imposes a recursive relation between the lists of order $k$ and $k-1$: if 
$$
R =  \left[\begin{matrix}
 R^{(1)}R^{(2)}\dots R^{(q)}\\ B
\end{matrix}\right]
$$
then the blocks in $\mathcal F_k(R)$ must be selected from the concatenations of the particular form: a member of $\mathcal F_{k-1}(R^{(1)})$ followed by a member of $\mathcal F_{k-1}(R^{(2)})$, and so on, until
a member of $\mathcal F_{k-1}(R^{(q)})$. The oracle inequalities \eqref{or1} and \eqref{ork} make such a selection possible. The dependence of the oracle on $E$ is such that whenever the above described scheme of building $Y$ (and $\pi$) is followed, the extension entropy function $h^\pi$ equals precisely $E$. 
\smallskip

The task in this paper is to provide a specific choice of the families $\mathcal F_k(R)$, which will ensure the last two assertions of Theorem \ref{main}. 
\smallskip

Here is how we proceed. We first modify the alphabet and the oracle (without changing the notation), as follows. We enlarge the alphabet by a few terms to get $\sum_{B\in\mathcal R_1} \mathcal O_1(B) \le (\#\Lambda)^{p_1-2}$ (see \eqref{or1}). Then we replace each value of the oracle, say $\mathcal O_k(R)$, by $(\#\Lambda)^{\lceil\log_{\#\Lambda}(\mathcal O_k(R))\rceil+1}$. Since we have enlarged each value by a factor between $\#\Lambda$ and $(\#\Lambda)^2$ and each $k$-rectangle contains at least two $(k\!-\!1)$-rectangles, \eqref{or1} and \eqref{ork} are still satisfied, so we have created a new oracle, whose values are integer powers of $\#\Lambda$. This modification does not affect any of the entropy computations.

Next we employ a simple combinatorial tool and fact:
\begin{defn}
A \emph{prefix partition} of the family $\Lambda^n$ of all blocks over $\Lambda$ of length $n$ is a partition into cylinders of the form $[C]=\{B\in\Lambda^n: B[1,|C|] = C\}$, where $C\in\Lambda^{|C|}$,
$1\le |C|\le n$. The blocks in one element of the partition have a common prefix $C$, which we will 
call the \emph{fixed positions} and a remaining suffix which ranges over all possible blocks of 
the complementary length, which we will call the \emph{free positions}.
\end{defn}

The proof of the following fact is an elementary exercise and will be omitted.

\begin{lem}\label{pff}
Suppose some positive integers $n_1,n_2,\dots,n_q$ and $n$ satisfy
$$
\sum_{i=1}^q(\#\Lambda)^{n_i}\le(\#\Lambda)^n.
$$
Then there exists a prefix partition $\Lambda^n = [C_1]\cup[C_2]\cup\cdots\cup[C_{q'}]$, with
$q'\ge q$ and satisfying, for all $i\le q$, the equality $|C_i|=n-n_i$ (notice that then $\#[C_i]=\Lambda^{n_i}$).
\end{lem}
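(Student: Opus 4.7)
The plan is to recognize the hypothesis as a Kraft-type inequality and then construct the prefixes $C_1,\dots,C_q$ greedily, completing the partition with trivial (length-$n$) cylinders at the end.

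Set $\ell=\#\Lambda$ and, for $i=1,\dots,q$, let $k_i=n-n_i$ (this is the desired length of $C_i$). Dividing the assumed inequality by $\ell^n$, the hypothesis is equivalent to the Kraft-type inequality
$$
\sum_{i=1}^q\ell^{-k_i}\le 1.
$$
After possibly reordering the indices, I may assume $k_1\le k_2\le\cdots\le k_q$; this reordering does not affect the conclusion since the lengths $|C_i|=n-n_i$ are permuted accordingly.

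Next I construct the $C_i$ inductively. Pick $C_1\in\Lambda^{k_1}$ arbitrarily. Suppose $C_1,\dots,C_{i-1}$ have already been chosen so that $[C_1],\dots,[C_{i-1}]$ are pairwise disjoint cylinders in $\Lambda^n$ (equivalently, no $C_j$ is a prefix of another for $j<i$). The number of words in $\Lambda^{k_i}$ that extend one of the already chosen $C_j$'s (i.e., begin with some $C_j$, $j<i$) is at most
$$
\sum_{j<i}\ell^{k_i-k_j}=\ell^{k_i}\sum_{j<i}\ell^{-k_j}\le\ell^{k_i}\bigl(1-\ell^{-k_i}\bigr)<\ell^{k_i},
$$
where the middle inequality uses Kraft's inequality together with the fact that the $j=i$ term has been subtracted off. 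Hence there remains at least one word of length $k_i$ that is not an extension of any previously chosen prefix; take this word as $C_i$. By construction, $[C_i]$ is disjoint from each $[C_j]$, $j<i$, and has the prescribed length $|C_i|=k_i=n-n_i$.

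After the $q$-th step, the union $[C_1]\cup\dots\cup[C_q]$ is a pairwise disjoint family of cylinders in $\Lambda^n$. To obtain a prefix partition I simply append, as singleton cylinders $[B]$ with $|B|=n$, every block $B\in\Lambda^n\setminus\bigl([C_1]\cup\cdots\cup[C_q]\bigr)$; each such $[B]$ is trivially disjoint from the previous cylinders and from the other singletons. The resulting family is a prefix partition of $\Lambda^n$ with $q'\ge q$ members and the required $|C_i|=n-n_i$ for $i\le q$. The only (mild) obstacle is making sure the greedy step never gets stuck, and this is exactly what the strict inequality above guarantees.
\qed
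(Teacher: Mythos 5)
Your proof is correct. The paper simply declares this lemma ``an elementary exercise'' and omits its proof, so there is no authorial argument to compare against; the Kraft-inequality reformulation followed by a greedy prefix-code construction is exactly the standard way to prove such a statement, and all the steps check out: sorting the lengths $k_i=n-n_i$ in nondecreasing order ensures that each candidate prefix $C_i$ can only conflict with earlier choices by \emph{extending} them (never by being extended by them), and the bound $\sum_{j<i}\ell^{k_i-k_j}\le\ell^{k_i}-1$ indeed follows from $\sum_{j\ge i}\ell^{-k_j}\ge\ell^{-k_i}$, so the greedy step never gets stuck. Completing the family with the remaining length-$n$ singleton cylinders is trivially consistent with the prefix condition.

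One small remark, not a flaw: if the hypothesis holds with equality and some $n_i=n$, then necessarily $q=1$ and your construction picks the empty word for $C_1$, which technically violates the paper's convention $1\le|C|\le n$ in the definition of a prefix partition. This degenerate case does not arise where the lemma is invoked (the oracle inequalities in the paper's construction are engineered to be strict), so it is harmless, but you could exclude it by adding the sentence that if some $k_i=0$ then $q=1$ and $\Lambda^n$ itself is the trivial one-atom partition.
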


We can now specify the families $\F_k(R)$ inductively, as follows. For $k=1$, since the numbers
$\mathcal O(R)$ are integer powers of $\#\Lambda$ summing to at most $(\#\Lambda)^{\pmin_1}$, the families
$\F_1(R)$ can be assigned according to a prefix partition of $\Lambda^{\pmin_1}$: $\F_1(R)=[C(R)]$, satisfying $|C(R)|=\pmin_1 - \log_{\#\Lambda}(\mathcal O_1(R))$. Since the lengths of all blocks in 
$\F_1(R)$ should match that of $R$, and in case $|R|>\pmin_1$ we cannot add any more free positions,
we also fix the $|R|-\pmin_1$ terminal symbols (for example, we put there zeros, regardless of $R$), 
so that the number of the free positions still equals $\log_{\#\Lambda}(\mathcal O_1(R))$.

Suppose for some $k\ge 2$ we have defined the families $\F_{k-1}(R)$ for all $(k\!-\!1)$-rectangles $R$ in such a way that for each $R$ the interval of integers $\{1,2,\dots,|R|\}$ is divided in two subsets, one,
called fixed positions, where all members of the family have the same symbols, and the rest, of cardinaly $\log_{\#\Lambda}(\mathcal O_{k-1}(R))$, called free positions, where all possible configurations occur. We need to specify $\F_k(R)$ for $k$-rectangles $R$. Let
$$
R =  \left[\begin{matrix}
 R^{(1)}R^{(2)}\dots R^{(q)}\\ B
\end{matrix}\right].
$$
According to the rules, we need to select $\F_k(R)$ from the collection of all possible concatenations of blocks belonging to $\F_{k-1}(R^{(1)}),\F_{k-1}(R^{(2)}),\dots,\F_{k-1}(R^{(q)})$ (one block from each family, maintaining the order). Such concatenations have fixed symbols along some set, and range over all possibilities over the rest (the free positions). The number of the free positions equals $\sum_{i=1}^q \log_{\#\Lambda}(\mathcal O_{k-1}(R^{(i)}))$.

The fixed contents determines that if two $k$-rectangles differ already in the top $k-1$ rows then their corresponding families will be disjoint. So, we only need to make sure that the families selected for the  same concatenation $R^{(1)}R^{(2)}\dots R^{(q)}$ (and different last row blocks $B$) are disjoint. 
Denoting 
$$
n(B)= \log_{\#\Lambda}\bigl(\mathcal O_k\left(\left[\begin{matrix}
 R^{(1)}R^{(2)}\dots R^{(q)}\\ B
\end{matrix}\right]\right)\bigr)
$$
the inequality \eqref{ork} takes on the form
$$
\sum_{B} (\#\Lambda)^{n(B)} \le (\#\Lambda)^n,
$$
where $n = \sum_{i=1}^q \log_{\#\Lambda}(\mathcal O_{k-1}(R^{(i)}))$.
Our Lemma \ref{pff} allows to create the families $\F_k(R)$ (with the required cardinalities) according to a prefix partition ``relative on the free positions'' i.e., for every $B$, in addition to the symbols fixed in the previous step for the $(k\!-\!1)$-rectangles, we also fix the symbols along some initial free positions in a way depending on $B$, and allow all possibilities along the remaining free positions (in fact there will be $n(B)$ such free positions). Now the inductive assumption is satisfied for $k+1$. This concludes the construction of the families $\F_k(R)$ and thus of the symbolic extension realizing the prescribed affine superenvelope $E$. 

\smallskip
It remains to check that the extension admits an embedding. To this end we only need to point out a measurable injective and equivariant map $\psi:X\to Y$ which is a selector from preimages of $\pi$. It is obvious that almost every point $x\in X$ (except in a null set) determines two objects: (1) the contents of the first row in all its preimages by $\pi$; this is the symbolic encoding $y_0$ of the system of markers in $x$, and (2) the symbols from $\Lambda$ in the second row along a subset of coordinates (the fixed positions), which is simply the union (over $k$) of the sets of fixed positions corresponding to the $k$-rectangles appearing in $x$. All remaining positions (whose collection may happen to be empty) are admitted all possible configurations of symbols (as $y$ ranges over $\pi^{-1}(x)$). In particular, there appears also the distinguished configuration consisting of all zeros. We assign the corresponding element $y_0\in\pi^{-1}(x)$ to be $\psi(x)$. Measurability of so defined map $\psi:X\to Y$ is standard and the fact that it is equivariant is obvious. 

\smallskip
The proof is almost complete. It remains to estimate, by $E(\mu)-h(\mu)$, the $\mathsf d$-bar diameter of the fiber of an \im\ $\mu\in\mtx$.
The property \eqref{dbar} allows to reduce the problem to the case of $\mu$ ergodic. Then, if $x$ is generic for $\mu$,\footnote{A point is generic for an \im\ $\mu$ if the empirical measures along its orbit converge weakly-star to $\mu$. In symbolic systems, equivalently: the density of occurrences of every block equals its measure. For $\mu$ ergodic the set of generic points has measure $1$.} it is easily seen that the free positions in the preimages of $x$ have density equal to the limit in $k$ of the densities of the free positions in the preimages by $\rho_k$ of $x_k$, which, in turn equal the weighted averages of $\frac1{|R|}\log_{\#\Lambda}(\mathcal O_k(R))$, where $R$ ranges over all $k$-rectangles and the weights are given by the values $\mu$ assigns to the corresponding cylinders. The dependence between $E$ and the oracle (described in the cited earlier works) is such that this limit happens to be exactly $E(\mu)-h(\mu)$. On the other hand, it follows from general facts in \tl\ dynamics that if $x$ is generic for $\mu$ then every ergodic measure in the fiber of $\mu$ has a generic point in the fiber of $x$. By Theorem I.9.10 in \cite{Sh96} two ergodic measures whose some generic points differ along a set of density $\epsilon$ are at most $\epsilon$ apart for the $\mathsf d$-bar distance. This ends the proof.
\qed \end{proof}

\begin{rem}\label{kr}
Krieger (\cite{Kr82}) proved that any aperiodic subshift of \tl\ entropy $h$ is conjugate to a subshift over an alphabet of cardinality $\lfloor2^h\rfloor +1$. So, our symbolic extension \ys\ with an embedding of \xt\ (whose alphabet is a priori $\{0,1\}\times\Lambda$, which is quite large), can be recoded using an alphabet whose cardinality is the smallest integer strictly larger than $2^{\,\sup E}$, where $\sup E$ denotes the largest value of $E$ on $\mtx$.  
\end{rem}

\begin{cor}\label{pg}
If \xt\ is an aperiodic zero-dimensional \ds\ then a uniform generator of \xt\ exists if and only if the minimal superenvelope $\eh$ of the entropy structure is finite (and then it equals $\hsex$, and its maximal value is $\hsex(X,T)$). The optimal cardinality of a uniform generator then equals $\lfloor2^{\hsex(X,T)}\rfloor+1$.
\end{cor}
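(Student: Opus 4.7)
My plan is to combine three ingredients already available: the equivalence from Theorem \ref{sg} between uniform generators and symbolic extensions with an embedding, the realization result Theorem \ref{main} for affine superenvelopes, and Krieger's recoding bound from Remark \ref{kr}. The existence half of the biconditional is essentially immediate: a uniform generator yields, via Theorem \ref{sg}, a symbolic extension of \xt, so $\hsex(X,T)<\infty$; combined with the identity $\hsex=\eh$ stated after Theorem \ref{env}, this shows that $\eh$ is finite. Conversely, if $\eh$ is finite, then as a finite superenvelope it is upper semicontinuous on the compact set $\mtx$ and hence bounded, so $\hsex(X,T)=\sup\hsex=\sup\eh$ is finite, and the construction below produces a uniform generator.

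For the upper bound on the optimal cardinality, the key observation is that the constant function $E\equiv\hsex(X,T)$ is itself an affine superenvelope. Affineness is automatic for constants; the inequalities $\hsex\ge h\ge h_k$ pointwise give $E=\sup\hsex\ge h_k(\mu)$ for every $\mu\in\mtx$ and every $k$, so $E-h_k$ is nonnegative, and upper semicontinuity of $E-h_k$ follows from that of $h_k$. Feeding this $E$ into Theorem \ref{main} yields a symbolic extension $\pi:(Y,S)\to(X,T)$ with an embedding satisfying $h^\pi=E$, whence $\htop(Y,S)=\sup E=\hsex(X,T)$. Since \xt\ is aperiodic and $\pi$ would send any periodic point of $Y$ to a periodic point of $X$, the subshift \ys\ is aperiodic. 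Krieger's theorem (Remark \ref{kr}) then lets me recode \ys\ as a subshift over an alphabet of size $\lfloor 2^{\hsex(X,T)}\rfloor+1$, and feeding the recoded extension back into Theorem \ref{sg} produces a uniform generator of that cardinality.

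For the matching lower bound, suppose \xt\ admits a uniform generator of cardinality $\ell$; Theorem \ref{sg} yields a symbolic extension with embedding over an alphabet $\Lambda$ of size $\ell$, and the chain $\hsex(X,T)\le\htop(Y,S)\le\log\ell$ gives $\ell\ge 2^{\hsex(X,T)}$. When $2^{\hsex(X,T)}\notin\N$, this is already the desired bound $\ell\ge\lfloor 2^{\hsex(X,T)}\rfloor+1$. The main obstacle I anticipate is the borderline integer case $2^{\hsex(X,T)}=\ell\in\N$, where one more symbol is needed. My plan there is to observe that $\htop(Y,S)=\log\ell$ combined with submultiplicativity of the block-complexity $N_n$ (so $\lim\frac1n\log N_n=\inf\frac1n\log N_n=\log\ell$) forces $N_n=\ell^n$ for every $n$; hence every finite word over $\Lambda$ occurs in $Y$, and a routine compactness argument gives $Y=\Lambda^\Z$, the full $\ell$-shift. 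But the full shift contains the constant sequences and so is not aperiodic, contradicting the aperiodicity of $Y$ inherited from \xt. Therefore $\ell\ge 2^{\hsex(X,T)}+1$ in this case too, matching the upper bound and completing the proof.
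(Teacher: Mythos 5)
Your overall route is the same as the paper's (implicit) proof — combine Theorem~\ref{sg}, Theorem~\ref{main}, and Remark~\ref{kr} — and your lower-bound argument, including the treatment of the borderline case $2^{\hsex(X,T)}\in\N$ via Fekete's lemma, density of blocks, and aperiodicity of $Y$ inherited from $X$, is correct.

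There is, however, a genuine gap in the upper bound: the constant function $E\equiv\hsex(X,T)$ is \emph{not} an affine superenvelope in general, and your justification for it runs backwards. Since each $h_k$ is upper semicontinuous, the difference $c-h_k$ of a constant minus $h_k$ is \emph{lower} semicontinuous; it would be upper semicontinuous only if $h_k$ were also lower semicontinuous, i.e.\ continuous, which fails already when $(X,T)$ is itself an aperiodic subshift with discontinuous entropy function $h$ (then $h_k=h$ for large $k$ and $c-h$ is not upper semicontinuous, even though $c$ dominates $h$ everywhere). So Theorem~\ref{main} cannot be fed this $E$. The repair is short and needs neither the constant to be a superenvelope nor the infimum defining $\hsex(X,T)$ to be attained. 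Set $\ell=\lfloor 2^{\hsex(X,T)}\rfloor+1$; since $\hsex(X,T)<\log\ell$ and $\hsex(X,T)=\inf_\pi\htop(Y_\pi,S_\pi)$, there is a symbolic extension $\pi_0$ with $\hsex(X,T)\le\htop(Y_{\pi_0})<\log\ell$. By Theorem~\ref{env}, $E=h^{\pi_0}$ is an affine superenvelope with $\sup E=\htop(Y_{\pi_0})$, so $\ell-1\le 2^{\hsex(X,T)}\le 2^{\sup E}<\ell$ and hence $\lfloor 2^{\sup E}\rfloor+1=\ell$. Feeding this $E$ into Theorem~\ref{main} and recoding by Remark~\ref{kr} gives a symbolic extension with an embedding over an alphabet of cardinality exactly $\ell$, whence, via Theorem~\ref{sg}, a uniform generator of cardinality $\ell$; combined with your lower bound this yields the claimed optimal cardinality.
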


\section{Extensions preserving selected periodic points}\label{sec4}

In this section we address the symbolic extensions with an embedding (equivalently, uniform generators) for systems with periodic points.
To illustrate the complexity of the problem we begin with a relatively simple yet motivating example.

\begin{exam}\label{example1}
Let $X$ be the array system consisting of all $\{0,1\}$-valued arrays $x$ (with markers) which fit the following description: $x$ has at most two nonzero rows: first of them (if present), number $m$ (which is arbitrary), contains a periodic \sq\ of period $m$ with markers every $m$-th position, while the second nonzero row (if present), number $m+j$ (where $j\ge 2$ is arbitrary), contains a periodic \sq\ of minimal period $mj$ with markers every $mj$-th position aligned with (some) markers in row $m$. The structure of ergodic measures is as follows: there are periodic measures $\mu_{m,i,j,l}$ supported by arrays with nonzero rows $m$ and $m+j$. The minimal period is $mj$. The index $i$ enumerates all possible $\{0,1\}$-valued (with markers) $m$-periodic orbits, and ranges from $1$ to $2^m$. Likewise, the index $l$ enumerates all $\{0,1\}$-valued (with markers) $mj$-periodic orbits, and ranges from $1$ to $2^{mj}$. If we fix $m$ and $i$ and let $j$ grow, the measures $\mu_{m,i,j,l}$ (regardless of $l$) approach the measure $\mu_{m,i}$ supported by arrays with only one nontrivial row, number $m$, containing the $i$-th periodic pattern of period $m$. If we let $m$ grow, the measures $\mu_{m,i}$ approach (regardless of $i$) the pointmass $\mu_0$ of the fixpoint---the zero array. 



An obvious symbolic extension \ys\ with an embedding is obtained as follows: $Y$ is the subshift over 
four symbols with markers, represented as two-row $\{0,1\}$-\sq s (with markers in each row). The preimage of each array $x$ which has two nontrivial rows is the unique $y$ with the $m$-th row of $x$ copied as the first row and with the $(m\!+\!j)$-th row of $x$ copied as the second row. The arrays $x$ with only one nontrivial row have many preimages: each of them has the first row identical as the $m$-th row of $x$, while the second row contains arbitrary $\{0,1\}$-valued \sq s with either no markers or just one marker
at some place, aligned with a marker in the first row. Finally, the zero array has also many preimages,
each consists of a pair of arbitrary $\{0,1\}$-valued \sq s either with no markers or with one marker
in the first row and no markers in the second, or one marker in both rows at the same place.
The verification of all required properties is straightforward. The function $h^\pi$ equals $0$ on all measures $\mu_{m,i,j,l}$, $\log 2$ on each $\mu_{m,i}$ and $\log 4$ on $\mu_0$. 

But we can create a new, better, symbolic extension with an embedding. And so, if $x$ has two nontrivial rows then any its preimage $y$ will be periodic with the same period $mj$ as $x$. The second row of $y$ will the same as the row $m+j$ of $x$, however, the first row of $y$ will 
be different:
\begin{itemize}
	\item if $j<m$ then the first row of $y$ uses only every $j$-th position, starting at a marker in the 
	second row (other are filled with zeros), where consecutive symbols of the $m$-periodic \sq\ appearing in 
	row $m$ of $x$ are copied (we align the markers in both rows; this row has period $mj$); 
	\item if $j\ge m$ then the first row of $y$ uses only every $m$-th position, where the consecutive $j$ 
	symbols (counting from the marker) of the $m$-periodic \sq\ appearing in row $m$ of $x$ are copied 
	repeatedly, so that the first marker is aligned with a marker in row 2 (this row also has period $mj$ and 
	since $j\ge m$ the entire periodic pattern from row $m$ of $x$ can be	recovered from $y$.
\end{itemize}
To points $x$ with only one nontrivial row we assign one special periodic preimage $y$ whose first row is a copy of the row $m$ of $x$ and the second row is just zeros. This preimage will serve for the embedding. By taking closure of the formerly constructed elements, every such $x$ admits also plenty of other preimages, whose second row is completely arbitrary with no or one marker, while the first row contains the periodic pattern as in row $m$ of $x$ but spread every $m$-th position (creating the period $m^2$). Finally, by taking closure of the formerly constructed elements of $Y$, the zero array in $X$ receives many preimages with an arbitrary second row (with no or one marker), and with the first row which is either just zeros or has one $1$ at some place (it may also have one marker). Also it receives second kind of preimages, with an arbitrary first row (with at most one marker) and trivial second row. Among these preimages there is also the fixpoint of $Y$---the element with two rows filed with zeros and no markers. This preimage serves for the embedding. Each measure $\mu_{m,i,j,l}$ has only one preimage, which is periodic so $h^\pi = 0$ on these measures. The measures $\mu_{m,i}$ have, in spite of periodic lifts, also lifts  supported by \sq s with one arbitrary and one periodic row. For these measures $h^\pi = \log 2$. Finally, $\mu_0$ lifts to a pointmass at the zero array and measures supported by \sq s with one arbitrary row and one trivial row. Here also $h^\pi = \log 2$. We managed to lower the topological entropy of $Y$ to $\log 2$. 
\end{exam}

To handle the general case, we need to develop a rather intricate theory. We begin with simple things.
By $\Per(X,T)$, $\Per_n(X,T)$ and $\Per_{[n_1,n_2]}(X,T)$ we will denote the sets of all periodic points 
in \xt, all periodic points with minimal period $n$, and with minimal period between $n_1$ and $n_2$, respectively. Below we present two crucial notions related to periodic points.

\begin{defn}
The \emph{supremum periodic capacity} and the
\emph{limit periodic capacity} are defined, respectively, as
\begin{gather*}
\psup(X,T) = \sup_{n\ge 1}\tfrac1n\log(\#\Per_n(X,T)),\\
\plim(X,T) = \underset{n\to\infty}{\overline{\lim}}\tfrac1n\log(\#\Per_n(X,T)).
\end{gather*}
\end{defn}
Clearly, $\plim(X,T)\le\psup(X,T)$. If \ys\ is a subshift over an alphabet $\Lambda$ then different $n$-periodic points differ already in the initial block of length $n$, thus $\#\Per_n(X,T)$ estimates from below the number of all blocks of length $n$ occurring in $X$. It is thus elementary to see that 
$$
\psup(Y,S)\le\log(\#\Lambda) \text{\ \ and \ \ }\plim(Y,S)\le \htop(Y,S).
$$
If \ys\ is a symbolic extension of \xt\ with an embedding then \ys\ has periodic capacities at least 
as large as \xt\ has, and thus $\psup(X,T)$ must be finite (in particular, each set $\Per_n(X,T)$ must be 
finite and $\Per(X,T)$ at most countable), the alphabet used in $Y$ must contain at least $2^{\psup(X,T)}$ elements, while its \tl\ entropy must reach at least $\plim(X,T)$. The first fact alone has no further dynamical consequences\footnote{For instance, a system which has $k$ fixpoints cannot be embedded in a subshift over less than $k$ symbols, but otherwise the symbolic extension may have, for example, zero entropy and, except in the fixpoints, use, say, only two symbols.}, but the second one implies that the extension entropy function $h^\pi$ in such extensions may be affected (enlarged) by the structure of periodic points in \xt\ (provided it is rich enough). This is the reason why in Example \ref{example1} we cannot hope to build a symbolic extension with an embedding with topological entropy smaller than $\log 2$ (although $\hsex(X,T)=0$ as in any system with zero \tl\ entropy); the limit periodic capacity in this example is easily seen to equal $\log 2$. Our goal of this section is to describe precisely the dependence of $h^{\pi}$ in symbolic extensions with an embedding on the structure of periodic points, and how it is combined with the usual dependence on the entropy structure.
\smallskip

As already mentioned in the Introduction, we will work in a slightly more general context. We allow the system \xt\ to have arbitrarily many (even continuum) periodic points for every period. However, for each $n$ we select a \emph{finite} and invariant subset $\Per_n^*\subset\Per_n(X,T)$ and we will study symbolic extensions with \emph{partial} embedding, i.e., such that every aperiodic ergodic measure and every periodic measure supported by $\bigcup_n\Per_n^*$ has an isomorphic preimage, equivalently, with an embedding of a Borel set of full measure for any measure as above. The unselected periodic points will typically have no periodic preimages. Clearly, if $\Per_n(X,T)$ is finite for each $n$, choosing $\Per_n^*=\Per_n(X,T)$
we include in our consideration symbolic extensions with (full) embedding as well.

\subsection{The enhanced system}
Let \xt\ be given in an array representation using in row number $k$ the alphabet $\Lambda_k$ ($k\ge 1$). We also attach a row number zero which is formally over the alphabet $\Lambda_0=\{0,0|\}$, but in each $x\in X$ we fill this row exclusively with zeros. For each $n$ we have selected and fixed a finite and invariant subset $\Per_n^*\subset\Per_n(X,T)$. For better understanding of how a symbolic extension with a partial embedding works, it will be helpful to enlarge, in a certain way, the system \xt. We will denote the resulting \emph{enhanced system} by \xtt, and \xt\ is going to be a subsystem (not a factor) of \xtt. Before the definition we need to establish some notation:

\smallskip\noindent
\emph{Notation}: Every periodic point (array) $x\in\Per^*_n$ is an infinite bilateral concatenation of copies of the same \emph{verical strip} extending through $n$ columns and all rows, i.e., a subarray $s\in(\prod_{k\ge 1} \Lambda_k)^n$. Depending on the positioning of the cutting places, $x$ produces $n$ different such strips, on the other hand $n$ points in the same orbit produce the same $n$ strips. Let $\mathcal S_n$ be the collection of all strips obtained in this manner from $\Per^*_n$. Clearly, $\#\mathcal S_n = \#\Per^*_n$. Next, in every strip from $\mathcal S_n$ we put a marker at the rightmost position in the row number zero. By $\mathbf S_n$ we will denote the system consisting of all arrays which are infinite bilateral concatenations of the strips from $\mathcal S_n$. The row number zero of each element $\hat x\in\mathbf S_n$ is $n$-periodic (one marker repeated every $n$-th position). Notice that $\mathbf S_n$ contains $n$ copies of each point $x\in\Per^*_n$, but they differ from $x$ in having markers in row number zero.

\begin{defn}\label{enhanced} We define \xtt\ as the closure of the union $X\cup\bigcup_n\mathbf S_n$ with the action of the usual shift.
\end{defn}

We have the following trivial observations: 
\begin{itemize}
	\item The sets $X$ and $\mathbf S_n$ are pairwise disjoint closed invariant subsets of $\hat X$.
	\item By taking closure of the union $X\cup\bigcup_n\mathbf S_n$ we only add arrays which are limits of 	
	\sq s of arrays belonging to the sets $\mathbf S_n$ with growing parameters $n$. Every such limit 
	array either has no markers in the row number zero, and then it belongs to $X$, or has just one marker and then 
	it is not recurrent. So the ``added set'' $\hat X\setminus(X\cup\bigcup_n \mathbf S_n)$ is contained in 
	the null set of non-recurrent points of \xtt.
	\item If \ys\ is a subshift (has only finitely many nontrivial rows) then so is the enhanced system \yst.
	\item Also notice that $\htop(\mathbf S_n) =\frac1n \log(\#\Per^*_n)$. For this reason, the \tl\ entropy 
	of the enhanced system defined in the following lines is never smaller than the ``partial'' supremum 
	period capacity understood as the supremum of the above entropies over all $n$.
\end{itemize}

The connection between symbolic extensions with (partial) embedding and the enhanced systems is established by the Theorem \ref{seraf} below and Theorem \ref{odwr} in the next subsection. In the proof of the former we will need the following notion and a lemma which refers to it. Note that in the lemma we do not assume \ys\ to be symbolic.

\begin{defn}\label{fin}
A finitary factor map from a \tl\ \ds\ \ys\ to another, \xt, is a continuous equivariant surjection $\pi:Y^\circ \to X^\circ$, where $Y^\circ$ and $X^\circ$ are dense invariant subsets with null complements in $Y$ and $X$, respectively.
\end{defn}

\begin{lem}\label{ensex}
Let $\pi:(Y,S)\to(X,T)$ be a factor map between zero-dimensional systems, admitting an equivariant selector $\psi$ from preimages defined at least on the sets $\Per^*_n$. Then there exists a finitary factor map between the enhanced systems, $\hat\pi:(\hat Y,\hat S)\to(\hat X,\hat T)$, such that $\hat\pi|_Y = \pi$, where the enhanced system \yst\ is understood with respect to the sets $\psi(\Per_n^*)$.
\end{lem}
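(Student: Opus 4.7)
My plan is to define $\hat\pi$ piecewise on the dense invariant subset $\hat Y^\circ := Y \cup \bigcup_n \mathbf{S}_n^Y$ of $\hat Y$ (writing $\mathbf{S}_n^Y$, $\mathbf{S}_n^X$ for the respective enhanced pieces), whose complement consists of non-recurrent points and is therefore null by Poincar\'e recurrence; $\hat\pi$ will land in $\hat X^\circ := X \cup \bigcup_n \mathbf{S}_n^X$. On $Y$ I take $\hat\pi := \pi$. On $\mathbf{S}_n^Y$, each strip $s \in \mathcal{S}_n^Y$ is by construction a width-$n$ slice of some $\psi(x)$ with $x \in \Per_n^*$ (note $\psi(x)$ has minimal period exactly $n$: equivariance yields $S^n\psi(x)=\psi(x)$, and the period cannot drop since $\pi(\psi(x))=x$ has minimal period $n$); I set $\hat\pi(s)$ to be the corresponding slice of $x$, with a row-$0$ marker at the rightmost column. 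Strip-by-strip this produces an equivariant surjection $\mathbf{S}_n^Y \to \mathbf{S}_n^X$ (surjectivity uses that $\pi$ restricted to $\psi(\Per_n^*)$ inverts $\psi|_{\Per_n^*}$). Equivariance and the extension property $\hat\pi|_Y = \pi$ are built in, so the serious remaining issue is continuity.

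Continuity on each individual piece is routine. Each $\mathbf{S}_n^Y$ is in fact \emph{clopen} in $\hat Y$: the defining property ``row $0$ carries markers exactly every $n$ columns'' can be witnessed in a window of width $2n$, and it distinguishes $\mathbf{S}_n^Y$ both from $Y$ (no markers in row $0$) and from $\mathbf{S}_{n'}^Y$ with $n'\neq n$. On such a clopen piece $\hat\pi$ is a sliding block code of range $\le 2n$ (two adjacent row-$0$ markers identify the enclosing strip, and a finite lookup table $\mathcal{S}_n^Y\to\mathcal{S}_n^X$ reads off the output), hence continuous. On $Y$, $\hat\pi=\pi$ is continuous by hypothesis.

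The hard part will be continuity at a point $y\in Y$ along a sequence $\hat y_m \to y$ with $\hat y_m \in \mathbf{S}_{n_m}^Y$; since $Y$ is closed but not open in $\hat Y$, such sequences genuinely occur, and disjointness of the closed sets $\mathbf{S}_n^Y$ from $Y$ forces $n_m\to\infty$. I plan to reduce this case to the continuity of $\pi$ by a ``widening-strip'' trick. Given any window $W$, for large $m$ the equality $\hat y_m|_W = y|_W$ together with the absence of row-$0$ markers in $y$ forces the strip $s_0^{(m)}$ of $\hat y_m$ meeting $W$ to contain $W$ entirely, with its single row-$0$ marker strictly to the right of $W$. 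Let $z_m \in Y$ be the bilateral periodic extension of $s_0^{(m)}$ read as a point of $Y$ (forgetting the row-$0$ marker); then $z_m$ is a shift of $\psi(x_0^{(m)})$, hence in $Y$, and it agrees with $\hat y_m$, hence with $y$, on $W$. Thus $z_m \to y$ in $Y$, and continuity of $\pi$ gives $\pi(z_m) \to \pi(y)$ in $X\subset \hat X$. Because $W$ sits inside the single strip $s_0^{(m)}$, direct inspection shows $\hat\pi(\hat y_m)|_W = \pi(z_m)|_W$ (row $0$ is marker-free on both sides inside $W$), whence $\hat\pi(\hat y_m)|_W \to \pi(y)|_W$; since $W$ was arbitrary, $\hat\pi(\hat y_m) \to \pi(y) = \hat\pi(y)$. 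This bridge between the finite block-code structure of $\hat\pi$ on each $\mathbf{S}_n^Y$ and the possibly unbounded-range continuity of $\pi$ on $Y$ is the one delicate step; once it is in place, the verification that $\hat\pi$ is a finitary factor map extending $\pi$ is complete.
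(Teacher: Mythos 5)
Your proposal is correct and follows essentially the same route as the paper's proof: define $\hat\pi$ piecewise as $\pi$ on $Y$ and as the strip-by-strip replacement code on each $\mathbf C_n$, then check continuity separately on each clopen piece and at points of $Y$ approached by a sequence from $\mathbf C_{n_i}$ with $n_i\to\infty$, where the central strips expand and agreement of images on growing windows reduces to the continuity of $\pi$. Your ``widening-strip'' reduction via the auxiliary periodic points $z_m\in Y$ makes explicit the step the paper treats more tersely, and your side remarks (minimal period of $\psi(x)$ is exactly $n$; each $\mathbf C_n$ is clopen in $\hat Y$) are correct justifications of claims the paper calls obvious.
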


\begin{proof}
For each $n$, $\psi:\Per_n^*\to\psi(\Per_n^*)$ is an equivariant bijection, thus it determines a natural bijection between the set of strips $\mathcal S_n$ (appearing in the elements of $\Per_n^*$) and the analogous set of strips $\mathcal C_n$ appearing in $\psi(\Per_n^*)$ (recall that the strips in both these sets are equipped with a marker at the last position in the row number zero). For $c\in\mathcal C_n$ we let $\hat\pi(c)$ denote the corresponding strip in $\mathcal S_n$.

The factor map $\hat\pi$ will be defined on the union $Y\cup\bigcup_n \mathbf C_n$ as follows:
\begin{itemize}
	\item on $Y$ we let $\hat\pi =\pi$, 
	\item for $\hat y\in \mathbf C_n$, which is a concatenation of some strips $c_j\in \mathcal C_n$ 
	($j\in\Z$), we define $\hat\pi(\hat y)$ as the corresponding concatenation (in the same order and 
	positioning) of the vertical strips $\hat\pi(c_j)$.
\end{itemize}
Obviously, $\hat\pi$ sends $Y\cup\bigcup_n \mathbf C_n$ onto $X\cup\bigcup_n\mathbf S_n$. Continuity of $\hat\pi$ on $Y$ and on each set $\mathbf C_n$ is obvious. If some elements $\hat y_{n_i}\in\mathbf C_{n_i}$ tend to some $y\in Y$ then $\{n_i\}$ must grow to infinity and the strips $c_{n_i}\in\mathcal C_{n_1}$ covering the zero coordinate in $\hat y_{n_i}$ must expand in both directions. This implies that $\pi(y)$ and $\hat\pi(\hat y_{n_i})$ agree on a large rectangle whose both dimensions grow with $i$ to infinity, which means that $\lim\hat\pi(\hat y)=\pi(y)$. We have proved continuity of $\hat\pi$ on its domain.

Because the domain and range of $\hat\pi$ are dense with null complements in $\hat Y$ and $\hat X$, respectively, $\hat\pi$ defines a finitary symbolic extension of \xtt. Clearly, $\hat Y$ contains $Y$, and $\hat\pi|_Y = \pi$, as required.
\qed \end{proof} 

According to the following result, in case \ys\ is a subshift, we can replace the finitary extension by a \tl\ one, without changing the extension entropy function.

\begin{thm}\label{seraf}
Let $\pi:(Y,S)\to(X,T)$ be a symbolic extension with an equivariant embedding of the sets $\Per^*_n$. Then there exists a symbolic extension of the enhanced system, $\breve\pi:(\breve Y,\breve S)\to(\hat X,\hat T)$ such that $h^\pi = h^{\breve\pi}|_{\mtx}$, i.e., $h^\pi$ prolongs to an affine superenvelope of the entropy structure on \xtt.
\end{thm}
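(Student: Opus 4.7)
The plan is to realize $\breve Y$ concretely as the closure of the graph of the finitary factor map $\hat\pi$ inside the product subshift $\hat Y \times \hat X$, and to take $\breve\pi$ to be the projection onto the $\hat X$-coordinate. First, I invoke Lemma \ref{ensex} to obtain $\hat\pi:\hat Y \to \hat X$ between the enhanced systems, with $\hat Y$ built from the lifts $\psi(\Per_n^*)$. Since $Y$ is a subshift, so are $\hat Y$ and the product $\hat Y \times \hat X$; I set
$$
\breve Y := \overline{\{(\hat y,\hat\pi(\hat y)) : \hat y \in Y \cup \bigcup_n \mathbf C_n\}} \subset \hat Y \times \hat X,
$$
equipped with the coordinatewise shift $\breve S$. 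Being closed and shift-invariant in a subshift, $\breve Y$ is itself a subshift. The projection $\breve\pi$ onto the second coordinate is continuous and equivariant, and its image contains the dense subset $X \cup \bigcup_n \mathbf S_n$ of $\hat X$, hence is surjective.

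The crux of the argument is the structural identity
$$
\breve Y \cap \Bigl((Y \cup \bigcup_n \mathbf C_n) \times \hat X\Bigr) \;=\; G := \{(\hat y,\hat\pi(\hat y)) : \hat y \in Y \cup \bigcup_n \mathbf C_n\}.
$$
I would prove this by exploiting the decisive feature built into Lemma \ref{ensex}: the row-zero markers in $\hat y$ and in $\hat\pi(\hat y)$ always occupy identical positions. Suppose $(\hat y_k,\hat\pi(\hat y_k)) \to (\hat y,\hat x)$ with $\hat y \in Y$ (no markers). If $\hat y_k \in Y$ along a subsequence, then $\hat\pi(\hat y_k) = \pi(\hat y_k) \to \pi(\hat y)$ by continuity of $\pi$. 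If $\hat y_k \in \mathbf C_{n_k}$ along a subsequence, closedness of each $\mathbf C_n$ forces $n_k \to \infty$; the window of $\hat y_k$ around position $0$ then coincides with a suitably shifted piece of some $\psi(p_{n_k})$ with $p_{n_k}\in\Per_{n_k}^*$, and continuity of $\pi$ converts the approximation $\psi(p_{n_k}) \to \hat y$ into $p_{n_k} = \pi(\psi(p_{n_k})) \to \pi(\hat y)$; in either case $\hat x = \pi(\hat y)$. A symmetric analysis for $\hat y \in \mathbf C_n$ (now $\hat y_k \in \mathbf C_n$ eventually, by closedness) gives $\hat x = \hat\pi(\hat y)$.

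With this identity in hand, the passage to invariant measures is routine. Any $\breve\nu \in \mathcal M_{\breve S}(\breve Y)$ projects in the first coordinate to some $\nu \in \mathcal M_{\hat S}(\hat Y)$; by Poincar\'e recurrence $\nu$ is carried by $Y \cup \bigcup_n \mathbf C_n$, whence the structural identity forces $\breve\nu$ to be the push-forward of $\nu$ under $\hat y \mapsto (\hat y,\hat\pi(\hat y))$. The first-coordinate projection is thus a measure-theoretic isomorphism between $(\breve Y,\breve S,\breve\nu)$ and $(\hat Y,\hat S,\nu)$, so their Kolmogorov--Sinai entropies agree, and we obtain a bijection $\mathcal M_{\breve S}(\breve Y) \leftrightarrow \mathcal M_{\hat S}(\hat Y)$ intertwining $\breve\pi$ with $\hat\pi$. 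For a fixed $\mu \in \mtx$, the $\hat\pi$-preimages of $\mu$ in $\hat Y$ either live on $Y$ (and are then exactly the $\pi$-preimages of $\mu$, of entropy at most $h^\pi(\mu)$) or on some $\mathbf C_n$; the latter case occurs only when $\mu$ is a periodic measure on $\Per_n^*$, and then the $\mathbf C_n$-lift is itself periodic of entropy zero, so it does not contribute to the supremum. Therefore $h^{\hat\pi}(\mu) = h^\pi(\mu)$, and the entropy-preserving bijection gives $h^{\breve\pi}(\mu) = h^\pi(\mu)$, as claimed.

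The main obstacle I anticipate is the structural identity itself: ruling out spurious pairs $(\hat y, \hat x) \in \breve Y$ with $\hat y$ recurrent but $\hat x \neq \hat\pi(\hat y)$ requires genuine use of the marker alignment from Lemma \ref{ensex} together with careful tracking of approximating sequences with $n_k \to \infty$.
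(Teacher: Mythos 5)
Your construction breaks at the claim ``Since $Y$ is a subshift, so are $\hat Y$ and the product $\hat Y \times \hat X$.'' While $\hat Y$ is indeed a subshift (the enhancement adds only one extra row to a symbolic system), $\hat X$ is \emph{not}: in this section $X$ is merely a zero-dimensional system, typically an inverse limit of subshifts with infinitely many nontrivial rows, and adding a row number zero does not make it symbolic. Consequently $\hat Y\times\hat X$ is a zero-dimensional system but not a subshift, and your $\breve Y$ --- a closed invariant subset of it --- need not be a subshift either. In fact it generally fails the necessary condition of expansiveness: two points $(\hat y,\hat x_1)$ and $(\hat y,\hat x_2)$ of $\breve Y$ with the same first coordinate (which, by your structural identity, can only occur for $\hat y$ in the null set of non-recurrent limit points outside the domain of $\hat\pi$) may have $\hat x_1,\hat x_2$ that stay uniformly close under the shift, since $\hat X$ itself is not expansive. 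You cannot repair this by projecting $\breve Y$ onto the $\hat Y$-coordinate: that projection need not be injective on the null set just described, so $\breve\pi$ does not factor topologically through $\hat Y$. This is exactly the obstruction that the paper's proof avoids by invoking Serafin's theorem \cite{Se09}, which asserts that the extension entropy function of the \emph{finitary} symbolic extension $\hat\pi$ from Lemma \ref{ensex} is also realized by some genuine continuous symbolic extension $\breve\pi:(\breve Y,\breve S)\to(\hat X,\hat T)$. Your graph-closure idea produces a continuous map but sacrifices the symbolic nature of the extension, which is the whole content of the theorem.

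Two smaller points. First, your structural identity and the subsequent measure-theoretic bookkeeping (Poincar\'e recurrence forcing every $\breve\nu$ onto the graph, so that $h^{\breve\pi}=h^{\hat\pi}$) are sound in spirit and would be correct \emph{if} $\breve Y$ were a subshift. Second, the verification $h^{\hat\pi}|_{\mtx}=h^\pi$ is in fact simpler than you suggest: for $\mu\in\mtx$ every preimage $\nu\in\hat\pi^{-1}(\mu)$ must be supported on $Y$ (not on some $\mathbf C_n$), since $\mathbf C_n$ projects into $\mathbf S_n$, which is disjoint from $X$; so $\hat\pi^{-1}(\mu)=\pi^{-1}(\mu)$ and the equality is immediate, without any appeal to zero-entropy periodic lifts.
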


\begin{proof}
This is a direct consequence of Lemma \ref{ensex} and a result of Serafin \cite[Theorem 1]{Se09} which says that the family of extension entropy functions in finitary symbolic extensions is the same as the analogous family for continuous symbolic extensions, hence it coincides with the family of all affine superenvelopes of the entropy structure.  
\qed \end{proof}

\subsection{Aperiodic extension of a zero-dimensional system} As a tool leading to reversing Theorem \ref{seraf} we need a specific zero-dimensional principal extension of the enhanced system. Since the construction is general, we formulate it for any zero-dimensional system, but we will apply it later only to \xtt.

\begin{thm}\label{ap}
Let \xt\ be any zero-dimensional system. There exists an aperiodic zero-dimensional extension \xtp\ of \xt, which is isomorphic on aperiodic measures, while each periodic orbit of \xt\ lifts to a collection of odometers.
\end{thm}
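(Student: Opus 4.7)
The strategy is to realize the aperiodic zero-dimensional extension as a closed invariant subset $X'\subset X\times\Omega$, where $\Omega=\varprojlim_k\Z/N_k\Z$ is the universal odometer associated with a fast-increasing sequence $N_1\mid N_2\mid\cdots$ with $N_k\to\infty$ (say $N_k=k!$). Equipped with the skew shift $T'(x,\omega)=(Tx,\sigma\omega)$, any such $X'$ is automatically zero-dimensional and aperiodic because $\sigma$ acting on $\Omega$ is. Thus the work is in defining $X'$ so that its projection $\pi$ onto $X$ is onto, measure-theoretically an isomorphism on aperiodic measures, and has odometer fibers over the periodic orbits of $X$.

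First I would adapt the marker construction of Section~\ref{zedysy} to obtain, for each $k$, a clopen set $F_k\subset X$ with $F_k,TF_k,\ldots,T^{N_k-1}F_k$ pairwise disjoint and whose translates $\bigcup_{|i|\le cN_k}T^iF_k$ cover $X$ outside an arbitrarily small open neighbourhood of $\Per_{\le N_k}(X,T)$. (This is a routine strengthening of the Krieger marker lemma used in \cite{Bo83}, obtained by working relative to the closed invariant set of points of period at most $N_k$.) Performing the upward adjustment so that level-$(k+1)$ markers refine level-$k$ markers produces a continuous equivariant map
\[
\phi=(\phi_k)_k:\ X\setminus\Per(X,T)\to\Omega,\qquad \phi_k(x)=\text{the unique }i\text{ with }x\in T^iF_k.
\]
I would then set $X'$ to be the closure in $X\times\Omega$ of the graph of $\phi$, together with, for each periodic orbit $O$ of $X$ isolated from the aperiodic part, the canonical odometer lift $\{(x,\omega)\in O\times\Omega:\omega\equiv i(x)\bmod n\}$, where $n=|O|$ and $i:O\to\Z/n\Z$ labels the positions in $O$.

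The verification splits according to the measure type. For an aperiodic invariant $\mu\in\mtx$, continuity of $\phi$ on the set $X\setminus\Per(X,T)$ of full $\mu$-measure forces $\pi^{-1}(\mu)$ to be the single measure obtained by pushing $\mu$ forward along the graph of $\phi$, so $\pi$ is a measure-theoretic isomorphism on such $\mu$. For a periodic orbit $O\subset X$ of period $n$, the fiber $\pi^{-1}(O)$ consists of the accumulation points of sequences $(x_j,\phi(x_j))$ with aperiodic $x_j\to x\in O$ (or, if $O$ is isolated, of the adjoined lift); a local analysis based on the alignment property (B) of the markers shows that the set of achievable $\omega$'s over each $x\in O$ is a union of cosets of $\ker(\Omega\to\Z/n\Z)$, so that $\pi^{-1}(O)$ is equivariantly conjugate to a finite disjoint union of copies of $\Omega$, i.e.\ a collection of odometers factoring onto $O$. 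The main obstacle is controlling this closure at the periodic orbits: one must exploit the freedom in placing the $F_k$ (and, if necessary, pass to a subsequence of $N_k$) to guarantee that the aperiodic approximants realize every residue of $\Z/N_k\Z$ compatible with the period $n$, so that the odometer fibers are neither degenerate nor missing cosets; the absence of parasitic fibers over aperiodic points is automatic from the continuity of $\phi$ off $\Per(X,T)$.
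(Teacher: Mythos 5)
Your construction cannot work as described, for a reason that is more fundamental than any detail of the marker bookkeeping: you force $X'$ to be a closed $T\times\sigma$-invariant subset of $X\times\Omega$ whose projection onto $X$ is an isomorphism on aperiodic measures. But then every aperiodic ergodic $\mu\in\mtx$, being isomorphic to its unique lift $\mu'$ on $X'$, acquires the odometer $(\Omega,\text{Haar},\sigma)$ as a measure-theoretic factor (the second coordinate projection pushes $\mu'$ to an invariant measure on $\Omega$, and by unique ergodicity of $\Omega$ that measure is Haar). Many aperiodic zero-dimensional systems carry weakly mixing, or even Bernoulli, measures, which have no nontrivial compact group factor and in particular no odometer factor. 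For such a system the desired $X'\subset X\times\Omega$ simply does not exist, so the ambient space $X\times\Omega$ is the wrong place to look.

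The same obstruction shows up concretely in your definition of $\phi$. For $\phi_k$ to be equivariant as a map into $\Z/N_k\Z$, the return time to $F_k$ along every aperiodic orbit would have to be a multiple of $N_k$: when the orbit crosses a marker, the label must wrap from $N_k-1$ back to $0$, so the return time must be $\equiv 0\pmod{N_k}$. Krieger's marker lemma (in the form used in the paper) only gives return times lying in an interval $[N_k,\,cN_k]$, not multiples of $N_k$, and a clopen exact Rokhlin tower of height $N_k$ covering $X$ would be equivalent to a topological $\Z/N_k\Z$ factor, which again fails for weakly mixing or minimal aperiodic systems. So $\phi$ is not equivariant, its graph is not invariant, and the closure you take is not what you think it is.

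The paper's proof avoids both problems by never introducing an external odometer: markers are inserted \emph{into the rows of the array representation} of the universal zero-dimensional system by almost finitary algorithms (Krieger markers, periodic markers, upward and leftward stretching), producing a system $\mathfrak{X}'$ whose alphabet is simply enlarged by a marker symbol. Because the gap lengths in row $k$ are only required to lie in $[k,2k-1]$, not to equal $k$, no odometer factor is imposed on aperiodic measures, yet every array with markers is aperiodic. Aperiodicity and the absence of extraneous fibers are achieved by local combinatorial rules rather than by synchronization with a fixed group rotation, and it is precisely this slack that makes the construction possible. If you want to salvage the ``product'' intuition, you would have to replace $\Omega$ by something that is \emph{not} a fixed dynamical system but rather depends on the orbit, which is essentially what the marker algorithm does internally.
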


\smallskip
Before the proof we establish some notation. Since \xt\ is a subsystem of the \emph{universal zero-dimensional system} \xtu\ defined as the full shift on the Cantor alphabet, which can be modeled as the shift on all arrays using in each row $k$ some alphabet $\Lambda_k$ (we can take any alphabets with $\#\Lambda_k\ge 2$), it suffices to prove the theorem for \xtu. Once we extend \xtu\ to some \xtup\ fulfilling the assertion of the theorem, we can then define \xtp\ as the preimage of \xt\ in \xtup, and obviously this extension will do. The advantage of working with \xtu\ rather than \xt\ is that we have guaranteed the convenient property that the complement of any \emph{almost null} set (see below for definition) is dense. The extension \xtup\ will be built by inserting in rows $k\ge 1$ of the arrays in \xtu\ some markers using what we will call ``almost finitary algorithms'' in the meaning defined below. This is not going to be a system of markers in the sense of Section \ref{zedysy} (although it could be fine-tuned to become one); to achieve aperiodcity a simplified arrangement of markers is sufficient.

\begin{defn}\label{apfin} A measurable subset of a dynamical system will be called \emph{almost null} if it has measure zero for all nonatomic \im s. In other words, the set is a union of a null set and some periodic points. By an \emph{almost finitary factor map} from a \tl\ \ds\ \xt\ to another, \xtp, we will mean a continuous equivariant map $\pi:X_0 \to X'$, where $X_0\subset X$ is a dense invariant subset with 
almost null complement, and $\pi(X_0)$ is dense in $X'$.\footnote{Notice that we do not require $\pi(X_0)$ to have almost null complement in $X'$. In fact, some periodic measures of $X$ may lift to aperiodic meaures supported by this complement.}
\end{defn}

Putting markers in the universal array system \xtu\ using an almost finitary algorithm technically means that given an array $x\in \mathfrak X_0$ (where $\mathfrak X_0$ is an invariant subset of $\mathfrak X$ with almost null complement; its density in the universal system is then automatic), the decision whether a marker should be put at a coordinate $(k,i)$ depends on the contents of $x$ in a finite rectangle around that coordinate. Unlike in the case of continuous algorithms, the size of this rectangle need not be uniformly bounded for coordinates ranging over one row. The dependence on a finite rectangle may fail for $x\notin \mathfrak X_0$, i.e., for arrays which are either periodic or belong to a null set. Once the markers are distributed, we define a new system \xtup\ as the closure of the collection of all arrays with markers. The factor map from \xtup\ to \xtu\ consists simply in erasing the markers and, thanks to the density of $\mathfrak X_0$, this is a surjection. It is obvious that this factor map is an isomorphism on aperiodic measures; the almost finitary algorithm serves as the inverse map. This general scheme does not let us control the lifts of periodic measures, which must be taken care of with help of additional means.

In what follows, in the role of $\mathfrak X_0$ we will be using the set of aperiodic arrays which are recurrent both forward and backward. It is obvious that this set is invariant and has an almost null complement.

\begin{proof}[Proof of Theroem \ref{ap} for \xtu]
To allow markers, we need to enlarge the alphabets to $\Lambda_k^*=\{a,a|:a\in\Lambda_k\}$. 
Recall that if $(k,i)$ and $(k,j)$, $i<j$, are positions of two consecutive markers in row $k$ of some array $x$ with markers then the interval $[i+1,j]$ is called a \emph{gap between markers in row $k$} while the subarray $x([1,k]\times[i+1,j])$ is called the \emph{$k$-rectangle}.\footnote{Since we do not require that the corresponding marker sets are nested, a $k$-rectangle is enclosed by markers only in row $k$, not in every row $1$ through $k$ like on the Figure \ref{rect}.} We will say that the $k$-rectangle \emph{matches a $p$-periodic pattern} whenever $x_{l,n} = x_{l,n+p}$ for all $1\le l\le k$, $i+1\le n\le j-p$, with one exception: the equality $x_{k,j-p} = x_{k,j}$ concerns only the symbols from $\Lambda_k$ ($x_{k,j}$ contains the marker while $x_{k,j-p}$ does not).

To improve readability, we isolate the first major stage of the construction in a separate lemma.

\begin{lem}\label{full}
There exists an almost finitary algorithm defined on the set $\mathfrak X_0$ of recurrent aperiodic arrays, distributing the markers in \xtu, respecting the following rules:

\begin{enumerate}
	\item[(A)] the gaps between markers in row $k$ have lengths at least $k$,
	\item[(B)] if a gap longer than $2k + 1$ occurs then the corresponding $k$-rectangle (we will call it 
	\emph{long}) matches a $p$-periodic pattern with a period $p < k$,
	\item[(C)] for every array $x\in\mathfrak X_0$, there are infinitely many indices $k$ 
	such that the $k$-th row of $x$ contains infinitely many markers going both forward and backward.
\end{enumerate}
\end{lem}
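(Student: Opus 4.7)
The plan is to construct an almost finitary algorithm that, at each coordinate $(k,i)$ with $x\in\mathfrak X_0$, inspects a finite window of the top $k$ rows around $i$ to detect any short-period repetition. Concretely, I first search for the smallest $p<k$ such that the top $k$ rows of $x$ match a $p$-periodic pattern on an interval $J$ containing $i$ of length at least $2k+2$, and I extend the window as needed to try to locate the endpoints of $J$. If such a $p$ and finite endpoints of $J$ are found inside the inspected window, I mark only the two endpoints of $J$ and suppress every marker strictly inside $J$, thereby exercising the permission of condition (B) to allow the correspondingly long interior gap. Otherwise---either no such $p<k$ is detected in a window of width $2k+1$, or $J$ appears to extend beyond any window I inspect (the bi-infinite fallback)---I place a marker at $(k,i)$ exactly when $i\equiv 0\pmod k$, with a small local shift of the default markers within distance $k$ of any found boundary of $J$ to keep all gaps of length at least $k$.

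Conditions (A) and (B) will then hold essentially by construction. The spacing between consecutive markers is either exactly $k$ (from the default rule), or equals $|J|\geq 2k+2$ (from the boundary rule), or at least $k$ at a transition between the two regimes (secured by the local shift), so (A) holds; condition (B) is immediate since any gap strictly longer than $2k+1$ lies strictly inside a maximal $p$-periodic interval with $p<k$, and the corresponding $k$-rectangle is then a segment of that $p$-periodic pattern. The almost-finitary property follows because on $\mathfrak X_0$ the inspection always terminates in a finite window: either a boundary of $J$ is actually discovered, or the bi-infinite fallback triggers after a bounded search of width comparable to $k$.

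The main obstacle is condition (C). The awkward case is that of an $x\in\mathfrak X_0$ whose top $k$ rows are globally $p_k$-periodic with $p_k<k$ for every sufficiently large $k$; such arrays exist among recurrent aperiodic ones, obtained by scheduling the doublings of $p_k$ so that $p_k$ stays close to $k/2$, for example $p_k = 2^{\lfloor\log_2 k\rfloor-1}$. On such an $x$, the boundary rule by itself would place no markers at all in row $k$, and (C) would collapse. The bi-infinite fallback, marking multiples of $k$ inside bi-infinite periodic intervals, resolves this precisely: the gaps produced have length exactly $k$, preserving (A) and leaving (B) vacuous for these rows, and every row $k$ of every $x\in\mathfrak X_0$ thus carries infinitely many markers in both directions, which is strictly stronger than (C) requires.
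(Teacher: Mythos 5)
Your plan fails at the very first step: the rules you write down are not equivariant. An ``almost finitary algorithm'' must decide whether a marker goes at $(k,i)$ by inspecting a window of content of $x$ around column $i$; it cannot refer to the absolute integer $i$. But both your default rule (``mark when $i\equiv 0\pmod k$'') and your bi-infinite fallback do exactly that. If $x'$ is the output for $x$, then the output for $Tx$ should be $S x'$ (the same marker pattern shifted by one), yet your rule would again mark the residues $0\pmod k$ for $Tx$, producing a different pattern. Without equivariance the closure of the set of decorated arrays is not a topological extension at all, and the erasing map is not a factor map.

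Even if you try to repair equivariance, the bi-infinite case is structurally impossible in your scheme. If the top $k$ rows of $x$ are globally $p$-periodic with $p<k$ and $p\nmid k$, there is no way to lay down markers at period $k$ from local content alone: every local window looks identical every $p$ positions, so any content-based rule produces markers whose spacing divides into the $p$-period, not a $k$-period. There is no canonical phase for a $k$-periodic marker placement inside a $p$-periodic pattern. This is exactly why the paper does something else: in the slow-odometer case it puts the extra markers in \emph{row $p$}, spaced every $p$ positions, with a phase fixed once and for all \emph{per periodic pattern} (one choice out of $p$); that rule is content-determined and hence equivariant, and it still delivers (C) because the period of the top $k$ rows must grow in an aperiodic array, so infinitely many rows receive such markers. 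For the default case the paper uses Krieger's marker lemma (clopen marker sets applied row by row), again inherently equivariant; your ``$i\equiv 0\pmod k$'' rule has no analogue of this and would need to be replaced by something like Krieger's lemma from scratch.

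A secondary problem: your fallback condition, ``$J$ appears to extend beyond any window I inspect,'' cannot be detected by a bounded search, contrary to your claim. Depending on the window size you choose, the algorithm may switch between the boundary rule and the fallback rule on the same array, so the output is not well defined. The paper avoids this because its periodic-marker rule does not require knowing whether the periodic stretch is finite or infinite; it only uses the detected $k$-rectangle and its period $p$, and discontinuities are then confined to periodic or non-recurrent arrays, exactly the ``almost null'' set that is allowed.
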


\begin{proof}We begin by recalling the \emph{Krieger's marker lemma} in the version for subshifts with periodic points (see \cite[Lemma 2.2]{Bo83}): In any subshift \ys, for every $K\ge k>1$ there exists a clopen set $F\subset Y$ such that:
\begin{enumerate}
	\item[(a)] $S^{i}(F)$ are pairwse disjoint for $i = 0,1,\dots,k-1$,
	\item[(b)] $X\setminus \bigcup_{i=-k}^{k} S^{i}(F) \subset (\Per_{[1,k-1]}(Y,S))^K$, where 
	the last set consists of points $y\in Y$ such that the block $y[-K,K]$ matches a periodic pattern of  
	some period $p<k$. 
\end{enumerate}

We continue the proof in which we will apply several algorithms. In the first one we distribute Krieger's markers. We proceed inductively, as follows: Step $1$ is idle. Given $k\ge 2$, denote by $(\mathfrak X'_k,\mathfrak T'_k)$ the subshift in the top $k$ rows of \xtu\ with the Krieger's markers already introduced in rows $1$ through $k-1$. Now we apply the Krieger's lemma to $(\mathfrak X'_k,\mathfrak T'_k)$ with the parameters $K=k$ and in this manner we obtain a clopen marker set $F_k\subset\mathfrak X'_k$. We place markers in row $k$ of every array $x\in \mathfrak X'_k$ by the usual rule: if $S^i x\in F_k$ then we place a marker in $x$ at the position $(k,i)$. It is obvious that now the markers satisfy the conditions (A) and (B) (See Figure \ref{Krieger}). Because the sets $F_k$ are clopen, this algorithm is continuous at every point.
\begin{figure}[ht]
\begin{center}
\includegraphics[width=10cm]{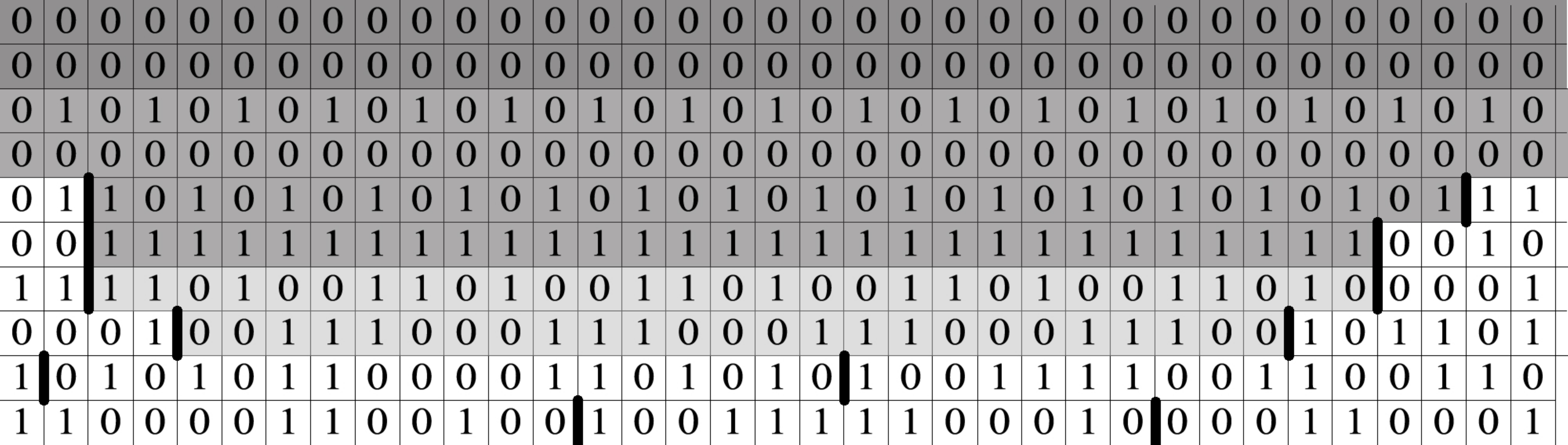}
\caption{\small\label{f0}Placement of Krieger's markers. The gray areas contain periodic patterns with periods 1, 2 and 6. In rows 1 through 4 the markers are missing (infinitely long gaps). The gaps is rows 5,6,7 and 8 are finite but long. \label{Krieger}}
\end{center}
\end{figure}

We need another algorithm in which we place so-called \emph{periodic markers}. They are meant to ensure that arrays belonging to so-called \emph{slow odometers} satisfy (C). An odometer consist of arrays in which all rows are periodic with unbounded minimal periods. An odometer is \emph{slow} if for all except finitely many indices $k$, the periodic pattern in the top $k$ rows has minimal period less than $k$. In arrays belonging to such odometers, although they are aperiodic, Krieger's markers appear only in finitely many rows. This must not be admitted and here is what we do:

Again we proceed by induction, and skipping the first step we pass to $k\ge 2$. As before, we let $(\mathfrak X'_k,\mathfrak T'_k)$ denote the subshift in the top $k$ rows of \xtu\ with all the Krieger's markers already introduced. Each periodic orbit in $\mathfrak X'_k$ with minimal period $p<k$ can be identified with a periodic pattern in the top $k$ rows, understood up to shifting. For every such pattern we select and fix one out of $p$ possible ways of distributing in this pattern some future markers periodically: one for every $p$ positions. Once this is established, in every element $x\in\mathfrak X'_k$ we search for long $k$-rectangles. As we know, every such rectangle matches a $p$-periodic pattern of minimal period $p<k$. Now, within each long $k$-rectangle we place new markers one every $p$ positions exactly as it was decided for the corresponding pattern, but not in row number $k$ only in row number $p$, and skipping all these markers which would fall closer than $p$ positions away from any markers already put in that row.\footnote{This practically means that either there already are some markers (Krieger's or periodic put in a preceding step) appearing one every $p$ positions, and then we do not put any markers in this step, or there are only Krieger's markers outside the long $k$-rectangle and we need to mind only the two closest external ones.} This concludes the $k$-th step of the induction
(see Figure \ref{per-aper}).
\begin{figure}[ht]
\begin{center}
\includegraphics[width=10cm]{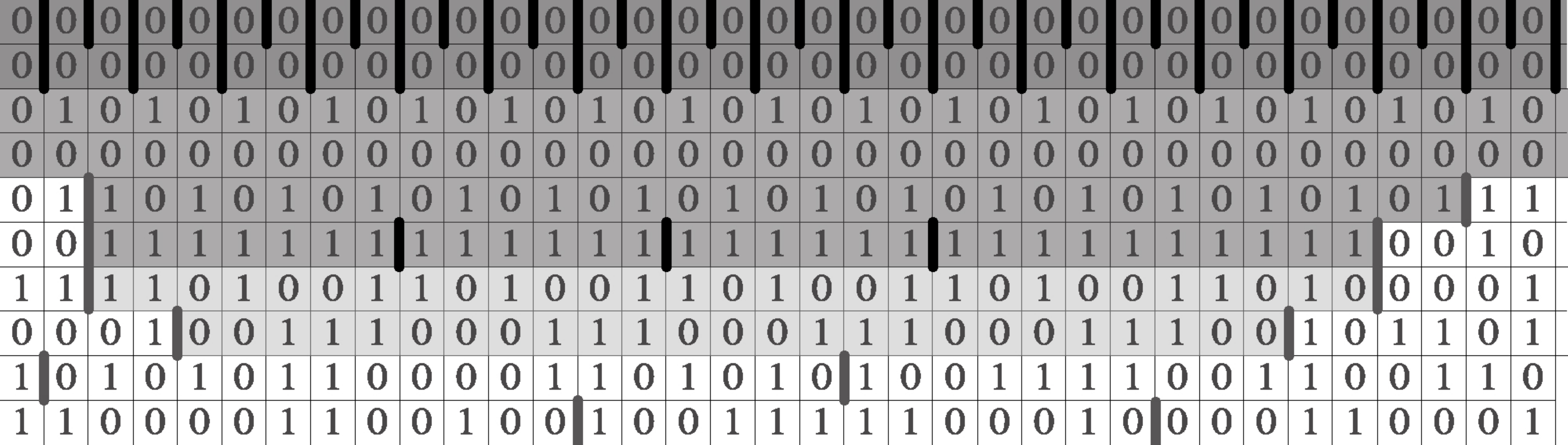}
\caption{\small\label{f1}Periodic markers in rows 1, 2 and 6. Note that periodic markers fall in a row $k>1$ only when the period of the top $k$ rows (within some long rectangle) is strictly less than $k$, 
thus their placement \emph{increases} that period. This is how we get rid of slow odometers (still, each slow odometer is replaced by a conjugate model).\label{per-aper}}
\end{center}
\end{figure}

Once the induction is completed, the markers obviously satisfy the conditions (A) and (B). We will prove also (C). Consider an array $x\in\mathfrak X_0$ with all the markers put in so far. As we will show in the next paragraph, each marker is put in $x$ by a rule continuous on the invariant set $\mathfrak X_0$. Since $x$ is forward and backward recurrent, each row of $x$ contains either infinitely many markers (in both directions) or no markers at all. If (C) fails, markers must be completely missing in all sufficiently far rows of $x$. But then, by (B), for every $k$, the contents of the top $k$ rows is periodic. If the periods were unbounded as $k$ increases, we would have periodic markers in infinitely many rows. So the periods are bounded and thus $x$ is periodic hence cannot belong to $\mathfrak X_0$. This way or another we arrive at a contradiction. 

We shall now analyze the discontinuity points of the second algorithm, i.e., identify the arrays of $x\in\mathfrak X_0$ in which some periodic markers cannot be predicted by viewing bounded rectangles. 
We focus on a coordinate $(k,i)$ at which there is no marker in $x$ (once a marker is put in some step of the induction, which happens with reference to a bounded rectangle, it is never removed afterwards). In order to determine periodic markers in row $k$ near the position $i$ we need to look at larger rectangles in the top $k'$ rows for $k'>k$. Once the coordinate $i$ falls in a gap in row $k'$ shorter than or equal to $2k'+1$ (but never shorter than $k'$), then we are sure that periodic markers will not appear in row $k$ within this gap, because in step $k'$ periodic markers do not apply, while in all further steps the smallest period of any periodic pattern containing this gap in row $k'$ is at least as large as the gap, i.e., not less than $k'$, so the periodic markers would go to a row with number at least $k'$. On the other hand, if for each $k'\ge k$ the coordinate $i$ falls in a long gap then the corresponding $k'$-rectangles contain periodic patterns with nondecreasing periods. Once the period reaches or exceeds $k$, we can stop looking further: the periodic markers in row $k$ (near the coordinate $i$) are already decided. The only case in which we are ``never sure'', is when $i$ falls in long $k'$-rectangles for all $k'>k$ and each time the period of the corresponding pattern is smaller than $k$. Such an array $x$ is obviously periodic on at least one side of the coordinate $i$. So, $x$ is either periodic or belongs to the null set of nonrecurrent points, i.e., does not belong to $X_0$, which ends the proof. 
\qed \end{proof}

\begin{rem}
If we apply this proof to a system \xt\ such that $\Per_n(X,T)$ is finite for every $n$, then all periodic markers can be predicted by looking at finite areas, hence the above algorithm is in fact continuous.
\end{rem}

We continue with the proof of Theorem \ref{ap} by applying two further almost finitary algorithms to put even more markers in the arrays with markers obtained in the preceding lemma. This time we aim to producing markers satisfying
\begin{enumerate}
	\item[(D)] for every array $x\in\mathfrak X_0$, there are infinitely many markers in every row, going 
	both forward and backward,
	\item[(E)] the gaps in row $k$ have lengths ranging between $k$ and $2k-1$.
\end{enumerate}

First we apply so-called \emph{upward stretching}: we copy every marker from row $k$ to all rows with indices $k-1, k-2,...$ until we reach a row $l<k$ in which the marker would fall $l$ or less positions away from some marker (then we do not copy it to rows with indices $\le l$), or till row $1$ (see Figure \ref{upwarda}). 
\begin{figure}[ht]
\begin{center}
\includegraphics[width=10cm]{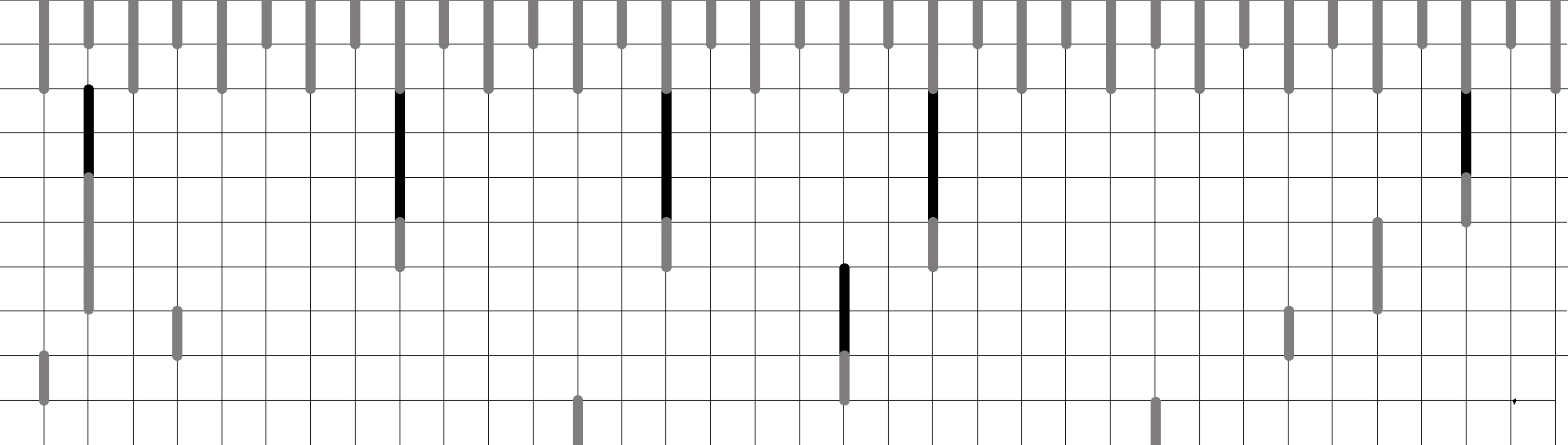}
\caption{\small\label{f2} The upward stretching. New markers shown in black.\label{upwarda}}
\end{center}
\end{figure}
In this manner we maintain the property (A) (which is included in (E)). Clearly, because of (C), after applying this algorithm we have satisfied (D). Let us analyze the discontinuities. Suppose $(k,i)$ is a position in an array $x$ where, by looking at no matter how large finite area, we always admit a marker coming later by upward stretching from some far row. This is possible only if there are no markers in the ``triangular area'' between $i-k'$ and $i+k'$ for all rows $k'>k$; every marker in this area would stop any potential upward stretching arriving to $(k,i)$ (see Figure \ref{upwardb}). 
\begin{figure}[ht]
\begin{center}
\includegraphics[width=10cm]{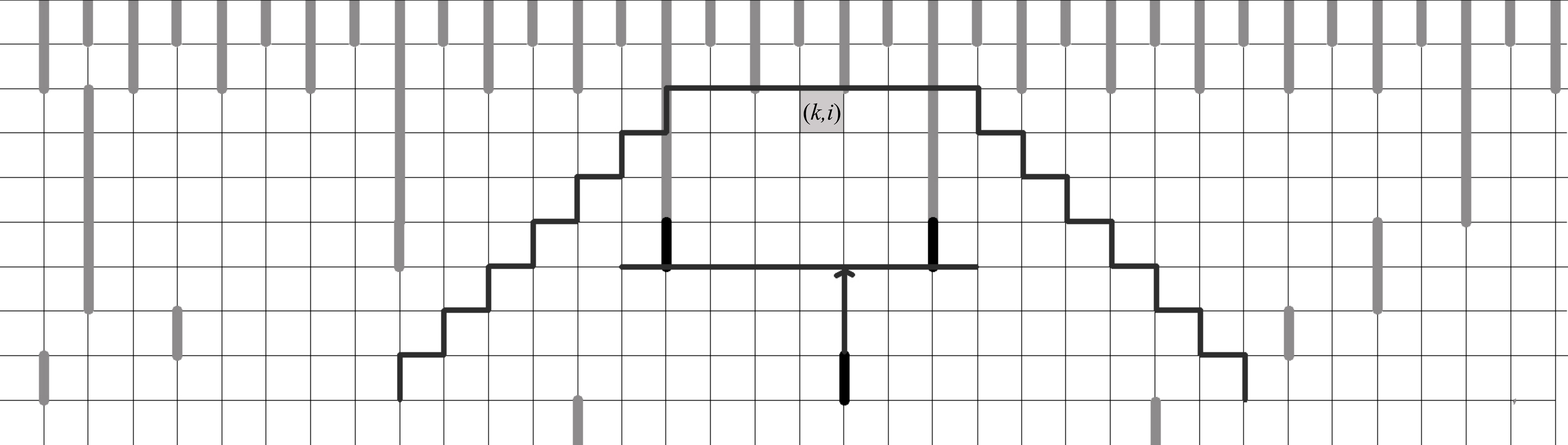}
\caption{\small\label{f3}The ``triangular area'' and markers blocking the upward stretching toward the position $(k,i)$.\label{upwardb}}
\end{center}
\end{figure}
Since the width of the triangular area in row $k'$ equals $2k'+1$, this means that $i$ belongs to a long gap in row $k'$, and thus to a long $k'$-rectangle, and these rectangles expand in both directions as $k'$ grows. Thus, by (B), $x$ is either periodic, or belongs to an odometer. But in every odometer there exists a row $k'$ such that the period $p$ of the pattern in the top $k'$ rows is larger than $k$, resulting in periodic markers in row $p>k$. At least two of these markers would appear in the above mentioned triangular area, which excludes this case. We have shown that the algorithm is discontinuous only at periodic arrays, hence it is almost finitary.
\smallskip

The last algorithm, which we are about to apply, will reduce the gap sizes in every row $k$ to at most $2k-1$ (without decreasing the lower bound $k$) as required in~(E). We call it the \emph{leftward stretching}: for every $k$, every marker in row $k$ appearing at a position $i$ is copied in the same row at positions $i-k, i-2k$, etc., until we arrive within less than $k$ positions away from some marker in this row (or till minus infinity)  (see Figure \ref{left}). 
\begin{figure}[ht]
\begin{center}
\includegraphics[width=10cm]{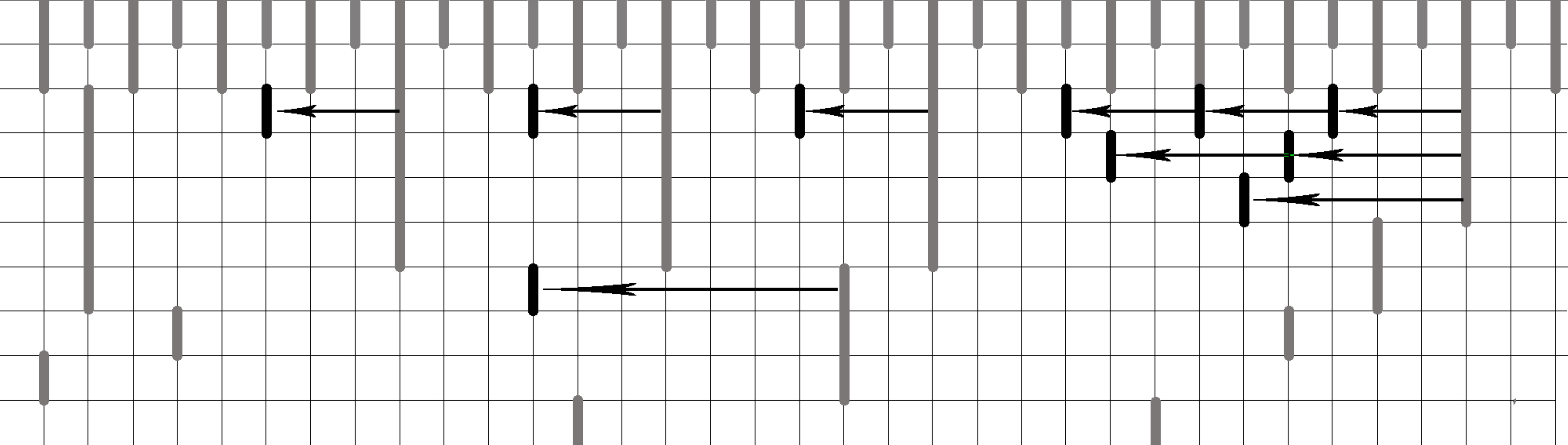}
\caption{\small\label{f4}Leftward stretching. New markers shown in black.\label{left}}
\end{center}
\end{figure}
It is obvious that the algorithm reduces the gap sizes as required. Also, thanks to (D), it is continuous at every point of $\mathfrak X_0$, hence almost finitary. This ends the description of the algorithm of placing markers in the universal system.

\smallskip
We recall the at this moment we define the extension \xtup, as follows: We take all arrays from the (dense) set $\mathfrak X_0$ on which the algorithm is continuous and produces arrays with markers satisfying (D) and (E). It is clear that these properties pass to the closure of the set of so created arrays with markers, and that arrays with so distributed markers are never periodic. So, letting $\mathfrak X'$ be the closure of this collection of arrays with markers, we obtain an aperiodic system. The factor map consisting in erasing all markers sends $\mathfrak X'$ onto $\mathfrak X$ (here we use the density  of $\mathfrak X_0$ in $\mathfrak X$). This map is invertible on $\mathfrak X_0$; the algorithm of placing the markers serves as the inverse map and, by continuity, $\mathfrak X'$ contains no points projecting to $\mathfrak X_0$ other than those obtained by this algorithm. This implies that \xtup\ extends \xtu\ isomorphically for all aperiodic measures. It remains to examine the lifts of periodic arrays (we need them to be elements of odometers, i.e., have all rows periodic). 
\smallskip

Let $x$ be a periodic array of period $p$ in the system \xtu\ and let $\{x_j:j\ge 1\}\subset\mathfrak X_0$ be a \sq\ approaching $x$. We assume that the arrays $x_j$ equipped with all due markers (denoted $x_j'$) converge to some $x'\in\mathfrak X'$. Then $x'$ is a lift of $x$ and all lifts of $x$ are obtained in this manner. Given $k$, we will analyze the markers in the ``test interval'' $[-m,m]$ in row $k$ of $x'_{j_m}$, for some $m$ and $j_m$ so large such that the array $x_{j_m}$ matches $x$ on $[1,2m]\times[-2m,2m]$. In particular, in this rectangle $x_{j_m}$ is periodic with the period $p$. If $k<p$, $x'_{j_m}$ has, in the test interval, markers occurring $p$-periodically. Consider the case $k\ge p$. There are several possibilities for $x'_{j_m}$: 
\begin{enumerate}
	\item The ``triangular area'' $\bigcup_{k'\ge m}(\{k'\}\times[-k',k'])$ contains no Krieger's 
	markers.\footnote{Although the marker symbols do not allow to distinguish between Krieger's, 
	periodic and stretched markers, we can always determine Krieger's markers by removing all markers and 
	repeating the first (continuous) algorithm.} By the rule (B), which is satisfied by Krieger's markers 
	alone, $x_{j_m}$ represents a slow odometer. We will come back to this case later.
	\item Otherwise let $k_0$ be the smallest index $k'\ge m$ such that a Krieger's marker is inserted within 
	$[-k',k']$ in row $k'$ of $x'_{j_m}$. Since the rows $k$ through $2m$ of $x_{j_m}$ are $p$-periodic along 
	$[-2m,2m]$, there are no Krieger's markers there, and thus $k_0>2m$. As before, the rectangle 
	$[1,k_0-1]\times[-k_0+1,k_0-1]$ of $x_{j_m}$ still contains a periodic pattern with a period 
	$p_0\in[p,k_0-1]$. Now we have two subcases:	
	\begin{enumerate}
		\item If $p_0\ge k$ then periodic markers occur in row $p_0$ across $[-k_0-1,k_0-1]$. 
		These markers prevent any markers in further rows from affecting the test interval by upward 
		stretching, i.e., the markers in the test interval (in $x'_{j_m}$) depend only on the (periodic) rows 
		$1$ through $k_0-1$ of $x_{j_m}$, just as if $x_{j_m}$ belonged to a slow odometer.
		\item If $p_0<k$ then the test interval has no Krieger's markers, no periodic markers of its own period 
		and no markers upward stretched from rows up to $k_0-1$. It may (but need not) receive a marker 
		upward stretched from row $k_0$ or further and clearly such a marker is then unique. We will call it 
		the \emph{intrusion}. Then, in spite of the intrusion, the test interval will eventually have only 
		markers generated by leftward stretching, which will occur precisely one every $k$ positions, with one 
		possible larger gap to the right of the intrusion (but still, not larger than $2k-1$; see Figure 
		\ref{intrusion}).
	\end{enumerate}
\end{enumerate}

\begin{figure}[ht]
\begin{center}
\includegraphics[width=10cm]{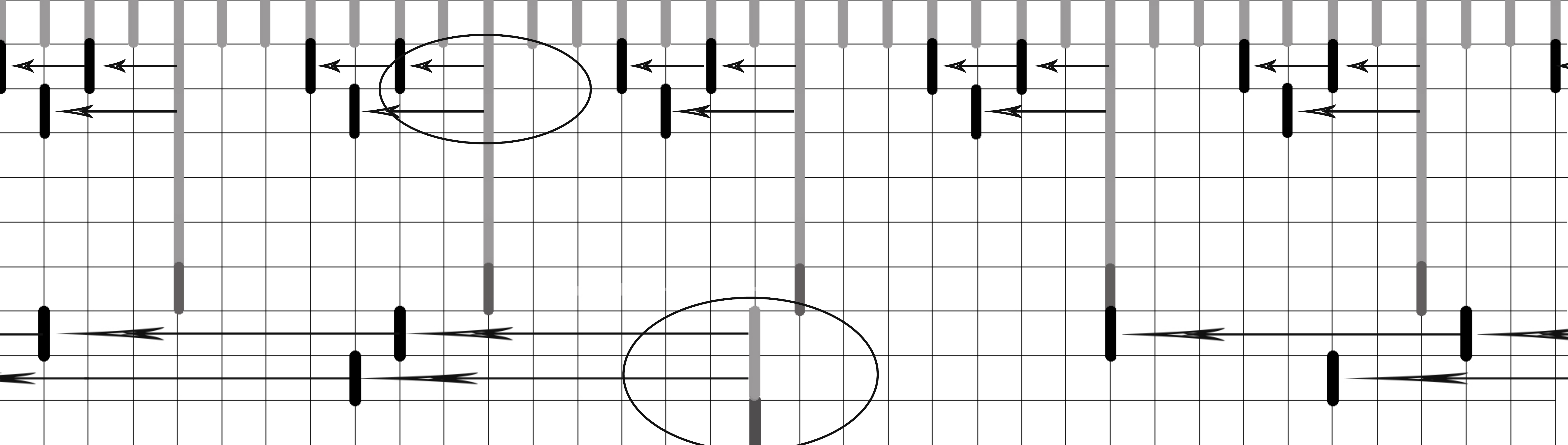}
\caption{\small\label{f5}An array which up to row 9 matches a $7$-periodic pattern. The marker in row 10 creates, by upward stretching, intrusions in rows 9 and 8. In row 8 (accidentally) all gaps have the same length 8, in row 9 there appears an exceptional larger gap. Periodic markers in row 7 generate intrusions in rows $6, 5,\dots, 2$, but only in rows 2 and 3 there is enough room for leftward stretching.\label{intrusion}}
\end{center}
\end{figure}

If for all $m$ we have the case (2b), then $x'$ has in row $k$ markers appearing $k$-periodically
with one possible larger gap, hence either the $k$-th row of $x'$ is periodic with the period $\mathsf{LCM}(p,k)$ or $x'$ is not recurrent.

It remains to see what happens in row $k$ of $x'_j$ when $x_j$ belongs to a slow odometer. Then in row $k$ we have either periodic markers at every $k$-th position and no markers otherwise, or stretched up periodic, occurring one every $k'$ positions for some $k'\ge k$, and leftward stretched subdividing the gaps between the periodic markers into gaps of lengths $k$ and one perhaps longer gap, up to $2k-1$ (only if $k'\ge 2k$; this situation is seen on Figure \ref{intrusion} for example in row $2$). If the parameter $k'$ grows with increasing $j$, then eventually there will be at most one longer gap in the $k$-th row of $x'$ and we are back in the case described earlier. Finally, if $k'$ is bounded (then we can assume it is constant) as $j$ increases, then the row $k$ in $x'$ has markers occurring with period $k$ (if $k'$ is a multiple of $k$) or $k'$, and the $k$-th row of $x'$ is perioidc with the period $\mathsf{LCM}(p,k)$ or $\mathsf{LCM}(p,k')$. Summarizing, $x'$ is either not recurrent, or all its rows are periodic (with unbounded periods), hence it belongs to an odometer. This concludes the proof.
\qed \end{proof}

\subsection{Building a symbolic extension with partial embedding}
We can now prove a theorem converse to Theorem \ref{seraf} (even a bit stronger than verbatim the converse, because in that theorem we did not assume an embedding of aperiodic measures). Recall that we have fixed the finite sets $\Per^*_n$ and meaning of the enhanced system depends upon this choice. Also recall that by a partial embedding we mean an equivariant measurable selector from preimages defined except on an almost null set being a union of a null set and the periodic points \emph{not} belonging to~$\bigcup_n\Per^*_n$.

\begin{thm}\label{odwr}
Suppose $\breve\pi:(\breve Y,\breve S)\to (\hat X,\hat T)$ is a symbolic extension of the enhanced system. Then there exists a symbolic extension $\pi:(Y,S)\to(X,T)$ with partial embedding and such that 
$h^\pi = h^{\breve\pi}|_{\mtx}$.
\end{thm}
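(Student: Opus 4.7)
The plan is to extract from $\breve\pi$ an affine superenvelope of $X$'s entropy structure and then realize it by a symbolic extension with partial embedding, combining Theorem \ref{main} applied on an aperiodic base (via Theorem \ref{ap}) with an adjunction of symbolic $n$-periodic codes for each orbit $\mathcal O\subset\Per_n^*$. First I would set $E := h^{\breve\pi}|_{\mathcal M_T(X)}$ and observe that $E$ is an affine superenvelope of the entropy structure of $X$: the array representation of $\hat X$ extends that of $X$ by the zero-th row of markers, which carries no entropy for measures supported in $X$, so $h_k^{\hat X}$ restricted to the closed subset $\mathcal M_T(X)\subset\mathcal M_{\hat T}(\hat X)$ agrees with $h_k^X$ (up to reindexing), and the restrictions of the nonnegative upper semicontinuous functions $h^{\breve\pi}-h_k^{\hat X}$ give the required $E-h_k^X$.

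Next, I would apply Theorem \ref{ap} to $X$ to obtain an aperiodic zero-dimensional extension $\sigma:X'\to X$, isomorphic on aperiodic measures with each periodic orbit lifting to a family of (zero entropy) odometers. The pullback $E':=E\circ\sigma$ on $\mathcal M_{T'}(X')$ remains an affine superenvelope of the entropy structure of $X'$; this uses both that $\sigma$ is an affine homeomorphism on aperiodic parts and that odometers have vanishing entropy, so the superenvelope condition propagates across the aperiodic--odometer interface (after a suitable choice of array representation for $X'$). Theorem \ref{main} applied to $(X',T')$ with superenvelope $E'$ then yields a symbolic extension $\pi'':Y''\to X'$ with full embedding $\psi'':X'\to Y''$ and $h^{\pi''}=E'$. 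By Remark \ref{kr} the alphabet $\Lambda''$ of $Y''$ may be taken of cardinality $\lfloor 2^{\sup E}\rfloor+1$, and the existence of $\breve\pi$ forces $\sup E\ge\psup(X,T)$, which in particular gives $|\Lambda''|^n\ge\#\Per_n^*$ for every $n$---enough combinatorial room for the next step.

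Finally, I would adjoin to $Y''$ symbolic codes for the selected periodic orbits: for each orbit $\mathcal O\subset\Per_n^*$ pick a distinct $n$-periodic word $w_\mathcal O$ over an enlarged alphabet $\Lambda''\cup\{\ast\}$, where the reserved symbol $\ast$ keeps the new orbits topologically isolated from $Y''$. Let $(Y,S)$ be the closure of $Y''\cup\bigcup_{n,\mathcal O}\{w_\mathcal O,Sw_\mathcal O,\dots,S^{n-1}w_\mathcal O\}$ in the ambient full shift; define $\pi$ to agree with $\sigma\circ\pi''$ on $Y''$ and to send each $w_\mathcal O$ to the appropriate base point of $\mathcal O\subset X$, and define the partial embedding $\psi:X\setminus N\to Y$ as $\psi''\circ\sigma^{-1}$ on aperiodic measures (using that $\sigma$ is a measurable isomorphism there) and tautologically on $\Per_n^*$ (sending $x\in\mathcal O$ to the unique element of the adjoined orbit lying over $x$). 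The hardest part will be the topological bookkeeping in this last step: one must arrange the encoding so that limits of the adjoined words with diverging $n$ either reduce (when the $\ast$ drifts to infinity) to sequences already in $Y''$, or accumulate in a non-recurrent set supporting no invariant measure, so that the closure does not introduce any unexpected invariant measures. Granted this, the entropy identity $h^\pi=h^{\breve\pi}|_{\mathcal M_T(X)}$ follows readily, since the adjoined periodic orbits have entropy zero, and for every $\mu\in\mathcal M_T(X)$ the maximal-entropy lift in $\pi^{-1}(\mu)$ still comes from $Y''$ and equals $E'(\sigma^{-1}(\mu))=E(\mu)$.
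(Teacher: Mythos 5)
Your proposal captures the right overall outline---pass to an aperiodic extension, build an embedding there via Theorem \ref{main}, then append periodic codes---but the step you yourself flag as ``the hardest part'' is precisely where the construction breaks, and it is where the paper's proof differs in an essential, not cosmetic, way.

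The key difficulty is continuity of $\pi$ on the closure. After adjoining, for each $n$ and each orbit $\mathcal O\subset\Per_n^*$, an $n$-periodic word $w_{\mathcal O}$ over $\Lambda''\cup\{\ast\}$, the set $Y$ is defined as a closure, and you must define $\pi$ on the limit points of these words with $n\to\infty$. When the $\ast$'s drift to infinity, the limits are $\ast$-free sequences over $\Lambda''$. For $\pi=\sigma\circ\pi''$ to be defined there, these limits must land in $Y''$ and, worse, must be mapped to the correct point of $X$ (namely a limit of the base points of the $\mathcal O$'s). Nothing in your choice of $w_{\mathcal O}$---distinctness over an enlarged alphabet plus a reserved isolating symbol---guarantees either of these. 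Generic choices produce limit sequences that are not in $Y''$ at all, or that are in $Y''$ but are sent by $\sigma\circ\pi''$ to the wrong place, so the glued $\pi$ fails to be continuous (Remark \ref{piecszesc} and Figure \ref{gluing} in the paper illustrate exactly this failure mode). The combinatorial room estimate $|\Lambda''|^n\ge\#\Per_n^*$ only guarantees existence of distinct words, not compatibility with the code on $Y''$.

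The paper gets around this by two moves that your proposal omits. First, Theorem \ref{ap} is applied to the \emph{enhanced} system $\hat X$ (not to $X$), so the aperiodic extension $\hat X'$ already contains lifts $\mathbf S_n'$ of the periodic blocks $\mathbf S_n$, and the symbolic extension $\breve Y$ already contains, over each strip $s\in\mathcal S_n$, a nonempty family $\mathcal W_n(s)$ of coherent $n$-words. The periodic codes are then \emph{extracted} from $\breve Y$, not invented over a fresh alphabet, and injectivity of the selection is obtained by a Hall's marriage argument (using the entropy comparison between $\mathbf S\subset\mathbf S_n$ and its preimage in $\breve Y$). Second, Theorem \ref{main}'s extension of $(\hat X',\hat T')$ must be taken in \emph{standard} form with a system of markers satisfying the alignment condition (c) of Lemma \ref{uuu}: this ensures that a $k(n)$-rectangle sits flush between dominant markers, so the block above a strip $s$ determines the top $k(n)$ rows of the source \emph{without margins}. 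That marginlessness is exactly what makes the artificial periodic code agree with the standard code in the limit, so the glued $\pi$ is continuous. Without these two ingredients, replacing $\hat X$ by $X$ in Theorem \ref{ap} and using $\ast$-isolated ad hoc words, the closure cannot be controlled.

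A smaller point: you pull back $E$ to $E':=E\circ\sigma$ and assert that it remains an affine superenvelope of the entropy structure of $X'$. This needs more than the vanishing of entropy on the odometer fibers---one has to compare $h_k^{X'}$ with $h_k^X\circ\sigma$ for a chosen array representation, and the defect of upper semicontinuity at the periodic/odometer interface is not automatic. The paper sidesteps this entirely by appealing to the known fact \cite[Theorem 7.5]{BD05} that a principal extension (here $(\hat X',\hat T')\to(\hat X,\hat T)$) lifts any symbolic extension to one with the same extension entropy function.
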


\begin{proof} 
We let $\pi':\hat X'\to\hat X$ be the natural factor map (deleting the markers) from the aperiodic extension constructed in Theorem \ref{ap} for \xtt. We will use the fact that $\pi'$ is a ``one-block code'', i.e., that each symbol in the image array depends only on the corresponding one symbol in the source. At this point it will be convenient to completely forget that the extension \xttp\ is equipped with some markers. We will soon need to introduce in \xttp\ an entirely new system of markers, hence remembering the old ones would only obfuscate the picture. We only need to remember that \xttp\ is an aperiodic, isomorphic on aperiodic measures and principal (periodic measures lift to odometers) zero-dimensional extension of \xtt\ represented as an array system and that the factor map $\pi'$ is a one-block code. Recall that the arrays in $\hat X$ have a special row number zero containing, in arrays belonging to $\mathbf S_n$, some $n$-periodically repeated single markers, otherwise it is empty or contains just one marker. We can attach this row number zero to the elements of \xttp\ (in $\hat x'\in\hat X'$ we copy the row number zero from $\pi'(\hat x')$). The markers in the row number zero will be called the \emph{dominant markers} (they have nothing to do with the markers introduced while building \xttp). We denote $\mathbf S_n' ={\pi'}^{-1}(\mathbf S_n)$. Clearly an array $\hat x\in\hat X'$ belongs to $\mathbf S_n'$ if and only if it has markers in the row number zero at every $n$-th position.

Now, we are given a symbolic extension $(\breve Y,\breve S)$ of \xtt, which realizes some affine superenvelope $\hat E$ of the entropy structure of \xtt\ as the extension entropy function. Since $(\hat X',\hat T')$ is a principal extension of \xtt, $(\breve Y,\breve S)$ can be principally extended to a symbolic extension of $(\hat X',\hat T')$. This fact is proved for example in \cite[Theorem 7.5]{BD05}. Thus, by change of notation, we can replace $(\breve Y,\breve S)$ by this new symbolic extension of $(\hat X',\hat T')$, because it is also a symbolic extension of \xtt\ and yields the same extension entropy function $\hat E$. Since $(\hat X',\hat T')$ is aperiodic, by Theorem \ref{main} there exists another symbolic extension of $(\hat X',\hat T')$, this time \emph{with an embedding}, which has the same extension entropy function (the lift of $\hat E$ to $\M_{\hat T'}(\hat X')$). So, we can assume that $(\breve Y,\breve S)$ is such an extension. We have the following factor maps $\breve\pi':\breve Y\to\hat X'$, \ $\pi':\hat X'\to \hat X$ \ and \ $\pi'\circ\breve\pi' = \breve\pi:\breve Y\to\hat X$.

The factor map $\breve\pi$ leading from $(\breve Y,\breve S)$ to \xtt\ and restricted to the preimage of \xt\ provides a symbolic extension with an embedding of aperiodic measures and with the extension entropy function equal to the restriction of $\hat E$ to $\mtx$. The only remaining problem is to include in this extension, for each $n$, $n$-periodic points which would map injectively and onto $\Per^*_n$, without increasing the entropy function of the extension (measurability is trivial, as we are including a countable set). 

The factor map $\breve\pi'$ from $(\breve Y,\breve S)$ of \xttp\ is obtained via Theorem \ref{main}, hence it is of special form, which we call \emph{standard}, that is, it refers to some $k$-rectangles, equivalently, to a system of markers (in the meaning of Section \ref{zedysy}), and then uses some disjoint ``lists of preimage blocks'' of these $k$-rectangles. As we have mentioned earlier, the system of markers used for this purpose must satisfy two conditions:
\begin{enumerate}
	\item[(a)] the \sq\ $\{\pmin_k\}$ must grow sufficiently fast, where the speed is determined by the 
	system, the choice of the partitions defining the array representation, and by the affine superenvelope 
	$\hat E$,
	\item[(b)] be balanced (i.e., $\frac{\pmin_k}{\pmax_k}$ must tend to $1$).
\end{enumerate}
We will need the following  

\begin{lem}\label{uuu} In the aperiodic system \xttp\ equipped with the row number zero containing the dominant markers there exists a system of markers satisfying the conditions (a), (b) and (c) formulated below, for some tending to infinity \sq\ of integers $k(n)$:
\begin{enumerate}
	\item[(c)] if $\hat x\in\mathbf S'_n$ then each dominant marker copied to rows $1$ through $k(n)$ is used 
	by the system of markers
\end{enumerate}
(in other words, the rectangle in the top $k(n)$ rows between two dominant markers is a concatenation of complete $k(n)$-rectangles).
\end{lem}
\begin{proof}
We only outline the proof, which consists in two steps, both being continuous algorithms of placing and adjusting the markers. 

Step 1: we choose a \sq\ $\{m_k\}$ such that the numbers $m_k-(m_1+\dots+m_{k-1}+k-1)$ satisfy (a) and 
we let $n_k=2m_k(m_k+1)$. Then we let $k(n)$ be the unique integer $k$ such that $n_k\le n<n_{k+1}$. Now, in arrays belonging to $\mathbf S'_n$, we copy all the dominant markers in rows with numbers $k\in\{1,\dots,k(n)\}$. Next, we apply the Krieger's markers skipping those which fall (in their rows $k$) closer than $n_k$ positions away from the copied dominant markers. Finally, we apply the upward adjustment, which clearly does not move the dominant markers, because they are already upward adjusted. In this manner we obtain a preliminary system of markers with $\pmin_k\ge \frac{n_k}2\ge m_k(m_k+1)$ (at this point $\pmax_k$ is just finite).

Step 2: By subdividing (see the description few lines below Figure \ref{rect}) we obtain a new, denser (thus still satisfying (c)) and balanced (as required in (b)) system of markers. All rectangle lengths are now bounded below by $m_k-(m_1+\dots+m_{k-1}+k-1)$, hence the new system of markers satisfies also (a).
\qed \end{proof}

From now on, we can assume that $\breve\pi':\breve Y\to\hat X'$ is a standard symbolic extension of \xttp\ built using a system of markers satisfying (a), (b) and (c). Recall also that the standard symbolic extension consists of elements $\breve y$ with two rows, the first one responsible for memorizing the positions of all markers in $\hat x'=\breve\pi'(\breve y)$, and the second one, where the ``preimage blocks'' occur. The advantage of a standard symbolic extension over an arbitrary symbolic extension is that a $k$-block in $\breve y$ (lying between two markers of order $k$) determines the entire $k$-rectangle lying underneath in $\hat x'$, without missing the margins, as it may happen in a general symbolic extension (see Remark \ref{piecszesc} and Figure \ref{gluing}). Unfortunately, in order to locate the markers of order $k$ we may need to see not only the $k$-block between them in the first row of $\breve y$, but also some context. We will need to take care of this small inconvenience.   

Define $\breve{\mathbf S}_n\subset \breve Y={\breve\pi}'\vphantom{ }^{-1}(\mathbf S'_n) = {\breve\pi}^{-1}(\mathbf S_n)$. These subsystems are pairwise disjoint (because so are the $\mathbf S_n$'s). Now we apply a very simple trick: we equip each $\breve y\in\breve Y$ with an extra row (number zero) which is a copy of the row number zero of $\breve\pi'(\hat y)$ (the same as the row number zero in $\breve\pi(\breve y)$, and which is essentially nontrivial only if $\breve y\in\breve{\mathbf S}_n$). This trick is clearly a conjugacy, but it allows to locate the dominant markers in $\breve Y$ without referring to any context. Let \emph{$n$-words} be the blocks of length $n$ appearing in $\breve{\mathbf S}_n$ and ending with a marker in the row number zero. Now, whenever in an element $\breve y\in\breve Y$ we see a block $B$ of length $n$, ending with a marker in the row number zero \emph{and directly preceded} on its left by another marker in this row, then: 
\begin{itemize}
	\item We know that $\breve y\in\breve{\mathbf S}_n$ and that $B$ is an $n$-word.
	\item Using the first row of $B$ (containing the encoded markers), we can determine \emph{without any 
	further context} all markers of orders $1$ through $k(n)$ that fall within $B$---this is a consequence of 
	Lemma \ref{firstmarker}.
	\item We can thus determine (also without any further context) the complete contents of the top $k(n)+1$ 
	rows of $\breve\pi'(\breve y)\in\hat X'$ (counting also the row number zero), along the $n$ horizontal positions 
	occupied in $\breve y$ by $B$---this is due to the construction of a standard symbolic extension 
	$\breve\pi'$ and the condition (c) of Lemma \ref{uuu}, according to which, this part of 
	$\breve\pi'(\breve y)\in\hat X'$ is a concatenation of some complete $k(n)$-rectangles. Next, applying 
	the one-block code $\pi'$ we determine the contents (call it $R(B)$) of the same area in 
	$\breve\pi(\breve y)\in\hat X$.
	\item Furthermore, since $\breve y\in\breve{\mathbf S}_n$ we know that $\breve\pi(\breve y)\in\mathbf 
	S_n$. Since $B$ is enclosed between a pair of dominant markers, the reconstructed rectangle $R(B)$ is 
	part of some vertical strip $s\in\mathcal S_n$.
\end{itemize}

We continue as follows: We fix some $n$ and for each strip $s\in\mathcal S_n$ let $\mathcal W_n(s)$ be the family of all $n$-words which occur ``above'' $s$ in preimages by $\breve\pi$. These sets are obviously nonempty, and need not be disjoint for different strips, but only when the strips agree in the top $k(n)$ rows. We will now find an injective selector from the families $\mathcal W_n(s)$.\footnote{This is the place in the proof where it becomes essential that $(\breve Y,\breve S)$ extends not just \xt\ but also the enhanced system \xtt. By referring to the \tl\ entropies of the subsystems $\mathbf S\subset \mathbf S_n$ we implicitly involve some ``local'' limit periodic capacities of \xt.} Observe that these families satisfy the Hall's marriage condition: for any nonempty subset $\mathcal S\subset\mathcal S_n$, let $\mathbf S$ be the collection of all free concatenations of the strips from $\mathcal S$. Clearly, $\mathbf S$ being a subsystem of $\mathbf S_n$ has a preimage in $\breve Y_n$ of entropy not smaller than that of $\mathbf S$. 
Because $\mathbf S$ uses all possible concatenations of its ``alphabet'' $\mathcal S$, while the above preimage uses at most all concatenations of its ``alphabet'' $\bigcup_{s\in\mathcal S}\mathcal W_n(s)$, the latter ``alphabet'' must be at least as numerous as the former. The Hall's Marriage Theorem now provides an injection assigning to every strip $s\in\mathcal S_n$ an $n$-word $W_s\in\mathcal W_n(s)$. At this point, for each $n$ and each $s\in\mathcal S_n$ we add to $\breve Y_n$ the $n$-periodic orbit of the \sq\ created by repetitions of the $n$-word $W_s$, and we define the factor map on this orbit by sending it onto the $n$-periodic orbit of the point $x\in\hat X$ consisting of repetitions of the strip $s$. The map should preserve the positioning of the dominant markers. By taking closure, the resulting enlarged symbolic space, denoted by $\breve Y^*$, includes also some, perhaps new, points with just one marker in the row number zero. We will define their images by the factor map in a moment.

Observe that $(\breve Y,\breve S)$, extending the aperiodic system \xttp, is aperiodic, so all the added periodic points are isolated and there is no collision in defining the factor map on them. The only points where continuity of the factor map has to be checked, are the limit points of \sq s of periodic points with growing periods. But for periodic points with large periods $n$ the images are, in the initial $k(n)$ rows, coherent with the standard code defined by the oracle (and composed with the one-block code from $\hat X'$ to $\hat X$). So, in the discussed limit points (regardless of whether these points belong to $\breve Y$ or are newly added), the factor map fully coincides with the standard factor map, due to the fact that $k(n)$ tends to infinity. This means that we have just constructed a \emph{continuous} factor map $\breve\pi^*$ from $(\breve Y^*,\breve S^*)$ onto its image $(\hat X^*, \hat T^*)$ with partial embedding of the sets $\Per^*_n$. In fact $\hat X^*=\hat X$ because the enhanced system already contains all possible points with just one marker in the row number zero (alternatively, we can restrict $\breve Y^*$ to the preimage of $\hat X$). Clearly, $\breve\pi^*$ has the same extension entropy function as $\breve\pi$, because we have added at most countably many preimages to some points. We can now define \ys\ as the preimage by $\breve\pi^*$ of \xt\ (recall that \xt\ is a subsystem of \xtt\ and contains the sets $\Per_n^*$) and this is our desired symbolic extension with partial embedding. 
\qed \end{proof}

The reader uninterested in explanations concerning technicalities in the preceding proofs may pass to Corollary \ref{comb}.

\begin{rem}\label{piecszesc} 
In both Theorems \ref{ensex} and \ref{odwr} we are dealing with similar problems of continuously gluing two factor maps: ``basic'', which is a priori given between some basic system and its basic symbolic extension, with another, ``artificially'' defined between closed sets added to both spaces, consisting of concatenations of pieces occurring in the basic systems and separated by dominant markers (included in the system of all markers). In general, the artificial code agrees with the basic code on regions stretching in some number of initial rows, except on some margins stretching left and right from each dominant marker (resulting from the coding length, usually growing with the row number), as shown on the Figure \ref{gluing}(a) below. 
\begin{figure}
\begin{center}
\includegraphics[width=10cm]{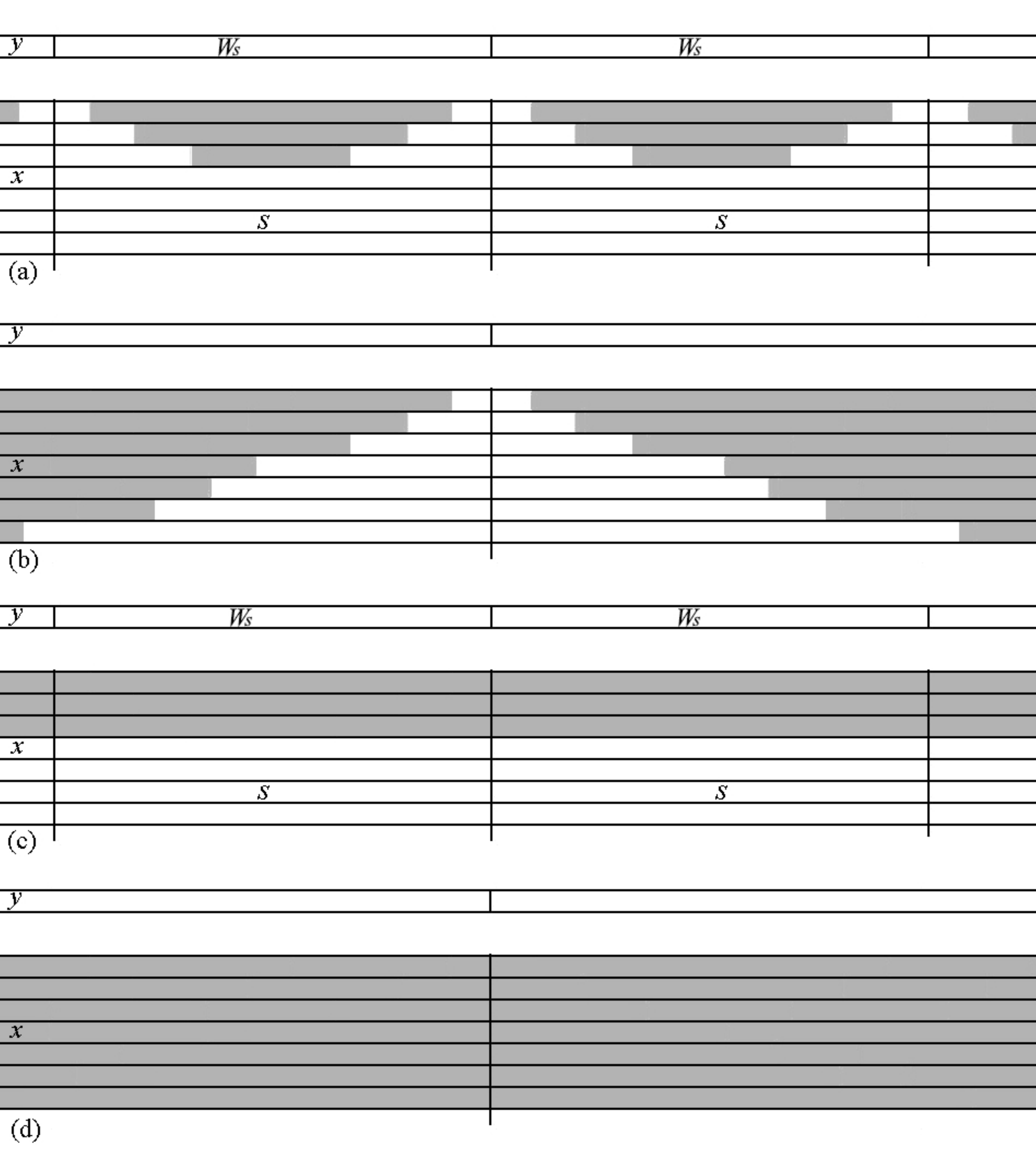}
\caption{\small\label{F:one}The area of a concatenated element where both codes agree.\label{gluing}}
\end{center}
\end{figure}
Some points in the basic systems can be approached by the added concatenated points with increasing distances between the dominant markers, so that these limit points may have at most one dominant marker. 
At points which have such a marker (the juxtaposed elements), the area where the codes agree need not cover the whole array, as shown in gray on Figure \ref{gluing}(b). In Theorem \ref{ensex} the extension is given \emph{a priori} and we cannot assume that it has any special properties. It particular, it cannot 
be standard, because it embeds some periodic orbits, while a standard symbolic extension is always aperiodic. So here the continuous gluing is not guaranteed. Fortunately, in that theorem we can afford the glued extension to be only finitary (and then use a general theorem to replace 
it by a continuous one). 

In Theorem \ref{odwr}, on the other hand, constructing a finitary extension with partial embedding might be insufficient, because while replacing it by a continuous one, we would most likely destroy the embedding. Fortunately, this time we are free to choose the starting symbolic extension as we wish, so we choose a standard one, in which the area where the codes agree stretches from marker to marker, without any margins, as shown on Figure \ref{gluing}(c). Then, in the ``questionable'' limit points with just one dominant marker, this area covers the whole array, and thus the continuous gluing is successful (Figure \ref{gluing}(d)). 
\end{rem}

We can now formulate the combined result which characterizes the entropy functions in symbolic extensions 
with an embedding.

\begin{cor}\label{comb}
Let \xt\ be a zero-dimensional system (or a system with small boundary property) in which we select and fix some finite families $\Per^*_n\subset \Per_n(X,T)$. Let $E$ be a function on $\mtx$. The following conditions are equivalent:
\begin{enumerate}
	\item There exists a symbolic extension $\pi:(Y,S)\to(X,T)$ with partial embedding and with $h^\pi \equiv 
	E$.
	\item $E$ coincides with the restriction to $\mtx$ of a finite affine superenvelope $\hat E$ of the 
	entropy structure of the enhanced system \xtt.
\end{enumerate}
\end{cor}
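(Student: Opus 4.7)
The statement is essentially the combined corollary of Theorems \ref{seraf} and \ref{odwr}, glued together via the Symbolic Extension Entropy Theorem (Theorem \ref{env}) applied to the enhanced system \xtt. My plan is to prove the two implications separately, using the enhanced system as the bridge, and then deal with the small boundary case by passing to an isomorphic zero-dimensional extension.

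For (1)$\Rightarrow$(2), suppose $\pi:(Y,S)\to(X,T)$ is a symbolic extension with partial embedding and $h^\pi\equiv E$. By definition of partial embedding, the required equivariant selector from preimages is defined on every set $\Per^*_n$ (after possibly adjusting on a null set via Remark \ref{me}). This is exactly the hypothesis of Theorem \ref{seraf}, which yields a genuine symbolic extension $\breve\pi:(\breve Y,\breve S)\to(\hat X,\hat T)$ of the enhanced system whose extension entropy function restricts to $h^\pi=E$ on \mtx. Applying the Symbolic Extension Entropy Theorem (Theorem \ref{env}) to the zero-dimensional system \xtt, the function $\hat E:=h^{\breve\pi}$ must be a finite affine superenvelope of the entropy structure of \xtt, and by construction $\hat E|_{\mtx}=E$.

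For (2)$\Rightarrow$(1), suppose $\hat E$ is a finite affine superenvelope of the entropy structure of \xtt\ with $\hat E|_{\mtx}=E$. By Theorem \ref{env} applied to the zero-dimensional system \xtt, there exists a symbolic extension $\breve\pi:(\breve Y,\breve S)\to(\hat X,\hat T)$ with $h^{\breve\pi}=\hat E$. Feeding this extension into Theorem \ref{odwr} produces a symbolic extension $\pi:(Y,S)\to(X,T)$ with partial embedding (embedding the aperiodic measures and precisely the chosen sets $\Per^*_n$) and satisfying $h^\pi=h^{\breve\pi}|_{\mtx}=\hat E|_{\mtx}=E$.

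For a system \xt\ with the small boundary property (not necessarily zero-dimensional), apply Fact \ref{Gu} to get an isomorphic zero-dimensional extension \xtp. Under the isomorphism of invariant measures, the entropy structure, the collection of periodic measures, and the selected sets $\Per^*_n$ all transport to \xtp, and symbolic extensions with partial embedding of \xt\ correspond bijectively (on the level of extension entropy functions on measures) with those of \xtp. The equivalence for \xtp\ then gives the equivalence for \xt; any measurable selector constructed in \xtp\ descends to a selector in \xt\ defined off a null set, which is prolonged to the missing null set following Remark \ref{me}. The main subtlety, already handled within Theorems \ref{seraf} and \ref{odwr}, is that the enhanced system on the \xt-side and on the \xtp-side must be formed with respect to corresponding choices of $\Per^*_n$; since the relevant periodic orbits of \xt\ are isomorphically recovered in \xtp, the two enhanced systems share the same entropy structure up to isomorphism, and hence the same collection of affine superenvelopes. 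This completes the reduction and the proof.
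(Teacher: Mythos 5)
Your proposal is correct and takes exactly the approach the paper intends: the corollary is presented without an explicit proof precisely because it is the direct combination of Theorem \ref{seraf} (giving (1)$\Rightarrow$(2), where the prolongation to an affine superenvelope is already part of that theorem's statement) and Theorem \ref{odwr} (giving (2)$\Rightarrow$(1) once Theorem \ref{env} supplies a symbolic extension of \xtt\ realizing $\hat E$), with the small boundary case reduced to the zero-dimensional one via Fact \ref{Gu} and Remark \ref{me} exactly as you outline. One minor inaccuracy: in (1)$\Rightarrow$(2), no adjustment via Remark \ref{me} is needed on the sets $\Per^*_n$, since by the definition of partial embedding the selector is already defined there (a null set cannot contain any point of $\Per^*_n$, as each such point carries a periodic invariant measure).
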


\begin{rem}\label{measu}
We can now prove that the existence of any, not necessarily measurable, partition $\P$ with $\diam(\P^{[-n,n]})\to 0$ implies the existence of a measurable one (i.e., of a uniform generator). 
For a non-measurable $\P$ the first part of the proof of Theorem \ref{sg} produces a symbolic extension with a perhaps non-measurable equivariant selector from preimages. However, such a selector is measurable on the set of periodic points, because this set is countable (being injectively mapped into the countable set of periodic points of a subshift it is fact finite for every period). Thus Theorem \ref{seraf} applies (with $\Per^*_n=\Per_n(X,T)$) producing a symbolic extension of the enhanced system. Now, we can apply Theorem \ref{odwr} to obtain a symbolic extension of \xt\ with a measurable embedding. Finally, the other implication of Theorem \ref{sg} yields a (measurable) uniform generator. The considerations in the last section (see Theorem \ref{sizea}) imply that the cardinality of the uniform generator can be chosen the same as that of the initial non-measurable partition, because they both lead to symbolic extensions with the same \tl\ entropies. 
\end{rem}

\section{The period tail structure}

We have provided \emph{some} characterization of the extension entropy functions in symbolic extensions with an embedding or partial embedding (the finite sets $\Per_n^* \subset \Per_n(X,T)$ are selected and fixed throughout). But since it refers to the somewhat artificial object, the enhanced system, we are not fully satisfied and the task is now to give a more direct description of these functions. We will give such a characterization,  relying on a new notion, which we call the \emph{period tail structure}, combined with the ``old'' entropy structure.

\begin{defn} Let \xt\ be a zero-dimensional \ds\ with selected sets \,$\Per_n^*$ and let $\{\P_k\}$ be some fixed refining \sq\ of clopen partitions. On each set $\Per_n^*$ we define the following \sq\ of functions
$$
\mathfrak P_k(x) = \tfrac1n \log\#\{x'\in\Per_n^*: x' \text{ has the same $\P_k$-name as $x$}\}.
$$
These functions are constant on orbits, so we can naturally associate their values to the ergodic measures
supported by these orbits. On other ergodic measures  we let all these functions equal zero. The functions are then prolonged by integration over the ergodic decomposition\footnote{Further, we will refer to such a prolongation as \emph{harmonic}. Harmonic prolongations are \emph{harmonic functions}, i.e., respect integral averaging, which is in general a stronger condition than just being affine. However, \usc\ affine functions are harmonic.} to the whole of $\mtx$. 
So established \sq\ of functions $\mathfrak P_k:\mtx\to[0,\infty)$ will be called the \emph{period tail structure}. 
\end{defn}
Since the sets $\Per_n^*$ are finite, it is clear that given $n$, the partition $\P_k$ with large enough $k$ separates all points in $\Per_n^*$, hence the functions $\mathfrak P_k$ are zero on them. This implies that the \sq\ $\{\mathfrak P_k\}$ tends (obviously nonincreasingly) to zero pointwise on ergodic measures, and thus also on the entire set $\mtx$.\footnote{In order for a symbolic extension with partial embedding to exist, we must assume that the ``partial'' supremum periodic capacity $\sup_n\frac1n\log(\#\Per^*_n)$ is finite. This enables us to apply the Lebesgue Dominated Convergence Theorem.}
\smallskip


In order to formulate how the period tail structure 
determines the desired extension entropy functions, we need to recall the concept of a repair function. It is closely related to that of a superenvelope, however, in some sense it eliminates the influence of the limit function. Full details are presented in \cite{Do05} or in the book \cite{Do11}. 

\begin{defn}
Let $\{\theta_k\}$ be a nonincreasing and pointwise converging to zero \sq\ of functions defined on a compact domain. By a \emph{repair function} we mean any nonnegative function $u$ such that $\widetilde{u+\theta_k} - (u+\theta_k)\to 0$ pointwise in $k$ ($\tilde f$ denotes the \usc\ envelope of a function $f$).
\end{defn}
The above convergence can be simplified as $\widetilde{u+\theta_k} \to u$ (nonincreasingly), hence a repair function is always \usc. In the definition we have used a more complicated formula just to highlight the interpretation of a repair function: added to the functions $\theta_k$ it \emph{repairs them} in the sense that the \emph{defects of upper semicontinuity} eventually vanish at every point. An equivalent \emph{repair condition} is: given a point $\mu$ in the domain and $\epsilon>0$ we have 
\begin{equation}\label{reper}
\overline{\lim_i}\,(u+\theta_k)(\mu_i) \le (u+\theta_k)(\mu)+\epsilon,
\end{equation}
for $k$ large enough (the threshold depending on $\mu$ and $\epsilon$), whenever $\mu_i\to \mu$. Observe also that if $u$ repairs $\{\theta_k\}$ then $u\geq \lim_k\tilde{\theta}_k$.\footnote{Equality holds when the \sq\ $\{\theta_k\}$ has so-called order of accumulation $1$. In general, the smallest repair function is obtained by repeating an iterative procedure as many times as the order of accumulation, which is always a countable ordinal. See \cite{Do05} or \cite{Do11} for more details.}
\smallskip

We have the following fairly obvious duality statement, whose easy proof is left to the reader (it can also be found in \cite{Do11}):
\begin{fact}\label{duality}
Let $\{h_k\}$ be a \sq\ of functions on a compact domain, converging pointwise and nondecreasingly to a finite limit function $h$, and such that $h_{k+1} - h_k$ is \usc\ for every $k$. Then $E$ is a superenvelope for $\{h_k\}$ if and only if $E - h$ is a repair function of the \sq\ of tails $\{h-h_k\}$. 
\end{fact}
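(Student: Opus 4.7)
The plan is to prove the two implications separately; the forward direction is essentially immediate, while the backward direction requires a short bootstrapping argument that combines the repair condition (which only controls behavior at large $k$) with the hypothesis that the consecutive differences $h_{k+1}-h_k$ are \usc.

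For the forward direction, I would assume $E$ is a superenvelope, so $E-h_k$ is nonnegative and \usc\ for every $k$. Setting $u=E-h$, the convergence $h_k\nearrow h$ forces $E-h_k\searrow u$ pointwise, so $u\ge 0$. Since each $u+\theta_k=E-h_k$ is already \usc, its \usc\ envelope equals itself, so the defect $\widetilde{u+\theta_k}-(u+\theta_k)$ vanishes identically and \emph{a fortiori} tends to zero. Hence $u$ repairs $\{\theta_k\}$.

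For the backward direction, assume $u:=E-h$ is a nonnegative repair function for the tails $\theta_k=h-h_k$. Nonnegativity of $E-h_k=u+\theta_k$ is immediate from $u\ge 0$ and $\theta_k\ge 0$, so the only real task is to show $E-h_k$ is \usc\ for each fixed $k$. I would exploit the telescoping identity
$$
u+\theta_k=(u+\theta_m)+(h_m-h_k),\qquad m\ge k,
$$
noting that $h_m-h_k=\sum_{i=k}^{m-1}(h_{i+1}-h_i)$ is \usc\ as a finite sum of \usc\ functions. Applying the elementary inequality $\widetilde{f+g}\le \widetilde{f}+g$, valid whenever $g$ is \usc, one obtains
$$
\widetilde{u+\theta_k}(\mu)\le \widetilde{u+\theta_m}(\mu)+(h_m-h_k)(\mu).
$$
Letting $m\to\infty$, the repair condition at $\mu$ combined with $u+\theta_m\searrow u$ forces $\widetilde{u+\theta_m}(\mu)\to u(\mu)$, while $(h_m-h_k)(\mu)\to\theta_k(\mu)$. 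Therefore $\widetilde{u+\theta_k}(\mu)\le (u+\theta_k)(\mu)$, and the trivial reverse inequality yields \usc\ of $E-h_k$ at every $\mu$.

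The main obstacle I anticipate is precisely the mismatch of scales in the backward direction: the repair condition is an \emph{asymptotic} statement about defects as $k\to\infty$, whereas being a superenvelope requires \usc\ at each fixed $k$. The assumption that $h_{k+1}-h_k$ is \usc\ is exactly what permits the telescoping step above and transfers the asymptotic information back to any prescribed $k$; without it the equivalence would fail.
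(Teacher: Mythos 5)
Your proof is correct and is almost certainly the intended ``easy proof left to the reader'': the forward direction is immediate since $E-h_k=u+\theta_k$ is already \usc, and the backward direction hinges exactly on the telescoping identity $u+\theta_k=(u+\theta_m)+(h_m-h_k)$ together with the inequality $\widetilde{f+g}\le\widetilde f+g$ for \usc\ $g$, which is where the hypothesis on $h_{k+1}-h_k$ enters. The paper does not supply its own argument (it delegates to \cite{Do11}), so there is nothing to contrast with; your writeup fills that gap faithfully.
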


We also need  the lemma below:
\begin{lem}\label{uone}
Let $\{\theta_k\}$ be as in Definition 5.2 and let $u_1 = \lim_k\tilde\theta_k$. Then for every \sq\ $x_k\to x$ in the domain we have
$$
\lim_k \theta_k(x_k) \le u_1(x),
$$
and for each $x$ there exists a \sq\ $x_k\to x$ for which the above inequality is an equality.
\end{lem}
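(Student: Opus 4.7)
The plan is to prove both assertions by exploiting the monotonicity of the sequence $\{\tilde\theta_k\}$ and the very definition of the upper semicontinuous envelope.

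For the upper bound, I would first observe that since $\{\theta_k\}$ is nonincreasing, so is $\{\tilde\theta_k\}$: indeed, the operation $f\mapsto \tilde f$ is monotone, hence $\theta_{k+1}\le\theta_k$ implies $\tilde\theta_{k+1}\le\tilde\theta_k$. Consequently $u_1=\lim_k\tilde\theta_k=\inf_k\tilde\theta_k$, and for any fixed $m$ and all $k\ge m$ one has $\theta_k(x_k)\le\tilde\theta_k(x_k)\le\tilde\theta_m(x_k)$. Given a sequence $x_k\to x$, upper semicontinuity of $\tilde\theta_m$ yields
$$
\limsup_k\theta_k(x_k)\le\limsup_k\tilde\theta_m(x_k)\le\tilde\theta_m(x).
$$
Letting $m\to\infty$ gives $\limsup_k\theta_k(x_k)\le u_1(x)$, which is the desired inequality (interpreting the statement for the $\limsup$, which coincides with the limit whenever the latter exists).

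For the matching sequence, I would use the definition of the \usc\ envelope, namely $\tilde\theta_k(x)=\limsup_{y\to x}\theta_k(y)$. For each $k$ one can therefore pick $x_k$ with $d(x_k,x)<1/k$ and
$$
\theta_k(x_k)\ge\tilde\theta_k(x)-\tfrac1k.
$$
Then $x_k\to x$ and $\liminf_k\theta_k(x_k)\ge\lim_k\bigl(\tilde\theta_k(x)-1/k\bigr)=u_1(x)$. Combined with the upper bound from the first part, this forces $\lim_k\theta_k(x_k)=u_1(x)$.

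The two steps are genuinely routine; the only place where one must be a bit careful is to justify that $\{\tilde\theta_k\}$ is nonincreasing (so that the limit $u_1$ is really the infimum and monotone convergence arguments apply), and to make sure the argument for the realising sequence does not require $\tilde\theta_k(x)$ to be finite at every stage — but finiteness is automatic here since $\theta_k$ is dominated by $\theta_1$, which (being a function whose \usc\ envelope is defined on a compact space) is bounded from above after one application of the envelope. No further obstacles are anticipated.
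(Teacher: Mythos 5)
Your proof is correct and follows essentially the same route as the paper's: both parts rest on the monotonicity of $\{\theta_k\}$, the defining property of the usc envelope, and a $1/k$-close approximating point for the realizing sequence. The only cosmetic difference is that you pass through $\tilde\theta_m$ and invoke its upper semicontinuity, whereas the paper applies the envelope property directly to $\theta_j$; these are the same estimate.
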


\begin{proof}
By monotonicity, $\theta_k(x_k)\le \theta_j(x_k)$ whenever $j\le k$. Thus
$$
\lim_k\theta_k(x_k)\le \lim_k\theta_j(x_k) \le\tilde\theta_j(x),
$$
for each $j$. Taking limit over $j$ we can replace right hand side  by $u_1(x)$.
Further, given $x$, for each $k$ there exists $x_k$ which is $\frac 1k$-close to $x$ ($x_k$ can be equal to $x$) and $\tilde\theta_k(x)\le \theta_k(x_k)+\frac 1k$. Then, for each $k$, 
$$
u_1(x)\le \tilde\theta_k(x) \le \theta_k(x_k)+\tfrac 1k,
$$
hence the reversed inequality follows.
\qed \end{proof} 

We are in a position to give the promised  characterization:

\begin{thm}\label{charact}
The following statements about a nonnegative function $E$ on $\mtx$ are equivalent:
\begin{enumerate}
	\item $E=h^\pi$ for some symbolic extension $\pi:(Y,S)\to(X,T)$ with partial embedding;
	\item $E$ is an affine superenvelope of the entropy structure $\{h_k\}$ and $E\ge h+u_1$, where
	$u_1 = \lim_k \widetilde{\mathfrak P}_k$.
\end{enumerate}
\end{thm}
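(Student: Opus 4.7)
The plan is to reduce the characterization via Corollary~\ref{comb} to an extension problem on the enhanced system \xtt\ and then analyze how the entropy structure $\{\hat h_k\}$ of \xtt\ relates on $\mtx$ to $\{h_k\}$ and the period tail structure $\{\mathfrak P_k\}$. By Corollary~\ref{comb}, condition (1) holds precisely when $E$ is the restriction to $\mtx$ of some finite affine superenvelope $\hat E$ of $\{\hat h_k\}$ on \xtt. Since $X$ is closed and $\hat T$-invariant in $\hat X$ and the row number zero is trivial on measures supported in $X$, the identity $\hat h_k|_{\mtx}=h_k$ holds for the natural array representation of \xtt; hence the restriction to $\mtx$ of any such $\hat E$ is automatically an affine superenvelope of $\{h_k\}$, which already handles the ``superenvelope'' half of the claim in the direction (1)$\Rightarrow$(2).

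The crux is the following lemma: for each $k$ the upper semicontinuous envelope $\widetilde{\hat h-\hat h_k}$, computed in $\mtxt$ and restricted to $\mtx$, dominates $\widetilde{\mathfrak P}_k$. To prove it, I would fix an ergodic $\mu\in\mtx$ supported on an orbit in $\Per_n^*$ and consider the full-shift subsystem of $\mathbf S_n$ formed by concatenations of those strips whose underlying periodic points share the $\P_k$-name of $\mu$; by definition of $\mathfrak P_k$ its topological entropy equals $\mathfrak P_k(\mu)$. Its Bernoulli measure $\hat\mu$, averaged over the $n$ phases of the dominant markers so as to be $\hat T$-invariant, has $\hat h_k(\hat\mu)=0$, since its top $k$ rows are deterministic. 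Letting $n$ grow one obtains a \sq\ $\hat\mu_i\in\mtxt$ weak-star converging to $\mu$ with $(\hat h-\hat h_k)(\hat\mu_i)\to\mathfrak P_k(\mu)$. Harmonic prolongation from ergodic to general measures in $\mtx$, combined with affinity of $\hat h-\hat h_k$ along ergodic decompositions, extends the inequality to arbitrary $\mu\in\mtx$; passing to the limit in $k$ yields $\lim_k\widetilde{\hat h-\hat h_k}\ge u_1$ on $\mtx$.

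Given this lemma, Fact~\ref{duality} closes both implications. If $\hat E$ realizes~(1), then $\hat E-\hat h$ is a repair function for the tails $\{\hat h-\hat h_k\}$ and therefore dominates $\lim_k\widetilde{\hat h-\hat h_k}\ge u_1$; using $\hat h|_{\mtx}=h$ this yields $E\ge h+u_1$ on $\mtx$, completing (1)$\Rightarrow$(2). For the converse I would build $\hat E$ on $\mtxt$ by putting $\hat E=E$ on $\mtx$ and defining it on each $\mathcal M_{\hat T}(\mathbf S_n)$ via a formula combining the value of $E$ at the periodic measure corresponding to $\hat\mu$ with the natural Bernoulli contribution from the strip structure, then extending affinely over the Choquet simplex $\mtxt$. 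The hypothesis $E\ge h+u_1$ is exactly what keeps $\hat E-\hat h_k$ nonnegative and \usc\ at points of $\mtx$ approached by $\mathbf S_n$-supported measures with $n\to\infty$, since the only defects that can appear there are controlled by $u_1$.

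The main obstacle is making the key lemma rigorous: computing $\hat h_k(\hat\mu)$ exactly on the Bernoulli-type measures on these homogeneous substrip systems, certifying that the appropriate averaging of such $\hat\mu$ converges weak-star to $\mu$, and patching ergodic approximations into a single convergent \sq\ for a general non-ergodic $\mu\in\mtx$. A secondary difficulty is the (2)$\Rightarrow$(1) construction of $\hat E$: it must be simultaneously affine on the simplex $\mtxt$ and an \usc\ upper bound for every $\hat h_k$, which requires careful bookkeeping along the ergodic decomposition.
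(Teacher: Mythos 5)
Your reduction to the enhanced system via Corollary~\ref{comb} and the identification $\hat h_k|_{\mtx}=h_k$ are exactly what the paper does, and you have correctly located the crux: relating the tails $\{\hat h-\hat h_k\}$ on $\mtxt$ to $\{\mathfrak P_k\}$ on $\mtx$, and then closing both implications by the duality between superenvelopes and repair functions. However, two of your specific steps don't go through as stated.

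First, your ``key lemma'' ($\widetilde{\hat h-\hat h_k}|_{\mtx}\ge\widetilde{\mathfrak P}_k$ \emph{for each fixed} $k$) is stronger than what is needed and appears to be unprovable by your construction. The measures $\mu^{(k)}$ you build (Bernoulli on strips sharing a $\P_k$-name) achieve the value $\mathfrak P_k(\mu)$, but they sit a fixed weak-star distance $\delta_k>0$ from $\mu$; approximating a point $\mu$ by $\nu_i\to\mu$ with $\mathfrak P_k(\nu_i)\to\widetilde{\mathfrak P}_k(\mu)$ and passing to $\nu_i^{(k)}$ only yields a high value of $\hat h-\hat h_k$ at some limit point $\mu'$ within $\delta_k$ of $\mu$, not at $\mu$ itself — and upper semicontinuity of $\widetilde{\hat h-\hat h_k}$ gives upper bounds near $\mu'$, not lower bounds at $\mu$. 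The paper avoids this by a \emph{diagonal} argument (Lemma~\ref{uone}): one picks $\mu_k\to\mu$ with $\mathfrak P_k(\mu_k)\to u_1(\mu)$, and then the matched sequence $\mu_k^{(k)}$ also converges to $\mu$ because $\delta_k\to 0$; applying the first part of Lemma~\ref{uone} to the nonincreasing tails $\{\hat h-\hat h_k\}$ along $\mu_k^{(k)}$ gives the needed bound $\lim_k(\hat h-\hat h_k)(\mu_k^{(k)})\le\lim_k\widetilde{(\hat h-\hat h_k)}(\mu)\le\hat u(\mu)$. You should replace your per-$k$ lemma by this limit-in-$k$ version.

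Second, your (2)$\Rightarrow$(1) construction is both vaguer and in a key place different from the paper's. You propose to define $\hat E$ on each $\mtsn$ by some formula \emph{combining} $E$ at the associated periodic measure with the Bernoulli entropy and then ``extend affinely over the Choquet simplex.'' The paper instead sets the repair correction $\hat u$ to be $u=E-h$ on $\mtx$ and simply \emph{zero} on the remaining ergodic measures, so $\hat E=\hat h$ on $\mtsn$ — the value of $E$ at periodic points plays no role off $\mtx$. The hard part is then: (a) verifying that $u_1$ repairs the restricted tails $(\hat h-\hat h_k)\mathbf 1_{\hat\M\setminus\M}$ using Lemma~\ref{mmf} (which moves $\mathbf S_n$-measures back into $\mtx$ to compare with $\mathfrak P_k$), (b) a general gluing lemma showing that two repair functions on a compact subset and on its complement, with the first dominating the second on the overlap of interest, combine into a repair function, and (c) using that $\mtx$ is a \emph{face} of $\mtxt$ to replace the glued function by its harmonic prolongation, which is upper semicontinuous by \cite[Fact A.2.20]{Do11} and remains a repair function by \cite[Lemma 8.2.14]{Do11}. ``Extend affinely'' is not available as an operation: you have to pass through the harmonic prolongation and then check that it is affine, usc, and still a repair function — this is where the hypothesis $E\ge h+u_1$ enters (it guarantees $u\ge u_1$, which is exactly the domination hypothesis in the gluing lemma). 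Your outline gestures at this with ``careful bookkeeping'' but omits these structural ingredients, and your formula for $\hat E$ on $\mtsn$ (involving $E$ at periodic measures) is not what makes the argument close.
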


Before the proof we relate $\mathfrak P_k$ with the tails of the entropy structure of the enhanced system. We view \xt\ in the array representation generated by the \sq\ of clopen partitions $\{\P_k\}$ and we create the enhanced system \xtt\ using this representation. In order to differentiate between the entropy functions and structures on \xt\ and on \xtt, in the latter case we will write $\hat h$ and $\hat h_k$, respectively.

\begin{lem}\label{mme}
If $\mu$ is periodic supported by $\Per_n^*$ and $k\ge 1$ is such that $\mathfrak{P}_k(\mu)>0$, then there exists an \im\ $\mu^{(k)}$ supported by  $\mathbf{S}_n$, such that $\mathfrak P_k(\mu) = (\hat h-\hat h_k)(\mu^{(k)})$.  For other ergodic measures $\mu$ (with $\mathfrak{P}_k(\mu)=0$)  we let $\mu^{(k)}=\mu$ and finally we prolong the assignment $\mu\mapsto \mu^{(k)}$ harmonically to a map $\mtx\to\mtxt$. Then
$$
\mathfrak P_k(\mu) \leq (\hat h-\hat h_k)(\mu^{(k)}). 
$$
The weak-star distance between $\mu$ and $\mu^{(k)}$ does not exceed some $\delta_k$   which tends to zero with $k$.
\end{lem}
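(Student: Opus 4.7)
The plan is to construct $\mu^{(k)}$ ergodic-by-ergodic—using a Bernoulli-type subsystem of $\mathbf S_n$ for periodic measures with $\mathfrak P_k>0$—and then prolong the assignment $\eta\mapsto\eta^{(k)}$ harmonically to all of $\mtx$.

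Fix an ergodic $\mu$ supported on the orbit of some $x\in\Per_n^*$ with $\mathfrak P_k(\mu)>0$, and let $x=x_1,x_2,\dots,x_N$ be the $N=2^{n\mathfrak P_k(\mu)}$ points of $\Per_n^*$ sharing the $\P_k$-name of $x$. Cutting each $x_j$ between positions $0$ and $1$ yields strips $\sigma_1,\dots,\sigma_N\in\mathcal S_n$ which, by the very definition of sharing the $\P_k$-name, coincide pointwise in rows $1$ through $k$ and only differ in lower rows. Let $Z\subset\mathbf S_n$ be the closed invariant set of all bilateral concatenations of strips drawn from $\{\sigma_1,\dots,\sigma_N\}$; this is a tower of height $n$ over a full shift on $N$ symbols, with $\htop(Z)=\tfrac1n\log N=\mathfrak P_k(\mu)$. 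Let $\mu^{(k)}$ be its measure of maximal entropy—the shift-invariant Bernoulli measure obtained by drawing strips independently and uniformly (with random offset to enforce invariance). The projection of $\mu^{(k)}$ onto rows $0$ through $k$ is the deterministic $n$-periodic pattern (the strips agree there and row $0$ bears a marker every $n$ positions), so $\hat h_k(\mu^{(k)})=0$, while $\hat h(\mu^{(k)})=\tfrac1n\log N$; the required equality $(\hat h-\hat h_k)(\mu^{(k)})=\mathfrak P_k(\mu)$ follows. For every other ergodic $\mu$ we set $\mu^{(k)}=\mu\in\mtxt$, for which $\mathfrak P_k(\mu)=0\le(\hat h-\hat h_k)(\mu)$ is trivial.

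The map $\eta\mapsto\eta^{(k)}$ agrees with the identity off the countable set of orbits in $\bigcup_n\Per_n^*$, hence it is measurable, and we prolong it by $\mu^{(k)}=\int\eta^{(k)}\,dM_\mu(\eta)$. Because each entropy-structure tail $\hat h-\hat h_k$ is a difference of harmonic functions on $\mtxt$ (the entropy function of a zero-dimensional system and its factor entropies respect ergodic decomposition), integrating the ergodic inequality against $M_\mu$ gives
\[
(\hat h-\hat h_k)(\mu^{(k)})\;=\;\int(\hat h-\hat h_k)(\eta^{(k)})\,dM_\mu(\eta)\;\ge\;\int\mathfrak P_k(\eta)\,dM_\mu(\eta)\;=\;\mathfrak P_k(\mu),
\]
which is the claimed inequality.

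For the weak-star bound, fix a metric on $\mtxt$ of the form $d(\mu,\nu)=\sum_m w_m\sum_{A\in\hat{\P}_m^{[-m,m]}}|\mu(A)-\nu(A)|$, where $\hat{\P}_m$ is the natural extension of $\P_m$ to a clopen partition of $\hat X$ incorporating row $0$, and the positive weights satisfy $\sum_m w_m<\infty$ together with $\sum_m w_m\,|\hat{\P}_m^{[-m,m]}|<\infty$. Because each $\Per_n^*$ is finite, for every $n$ there is a threshold $K(n)$ beyond which $\P_k$ separates $\Per_n^*$, forcing $\mathfrak P_k\equiv0$ on measures of period at most $n$; hence the least period $N(k)=\min\{n:\exists\,\mu\in\mathcal M(\Per_n^*),\,\mathfrak P_k(\mu)>0\}$ tends to infinity with $k$. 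For an ergodic lift of period $n\ge N(k)$, the strips $\sigma_j$ agree with $\sigma_x$ throughout rows $1$ through $k$, so for $m\le k$ the only discrepancy between $\mu$ and $\mu^{(k)}$ comes from the row-$0$ markers, occurring at density $1/n\le 1/N(k)$; this yields $\sum_A|\mu(A)-\mu^{(k)}(A)|\le C_m/N(k)$ with $C_m$ depending only on $m$. The terms with $m>k$ are controlled by the summability of the weights, producing $d(\mu,\mu^{(k)})\le\delta_k\to0$ uniformly, and convexity of $d$ propagates the bound to all of $\mtx$.

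The main obstacle is this uniform weak-star bound. It rests on two ingredients that must be combined carefully: the arithmetic fact that the period of any ergodic measure with $\mathfrak P_k>0$ must grow to infinity with $k$ (from finiteness of each $\Per_n^*$ and the separating property of $\{\P_k\}$), and a clean quantification of how sparse markers in row $0$ perturb cylinder probabilities. The remaining steps—tower-over-full-shift entropy and harmonicity of $\hat h-\hat h_k$—are essentially mechanical.
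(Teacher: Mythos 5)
Your construction of $\mu^{(k)}$ is the same as the paper's: the measure of maximal entropy on the subsystem of $\mathbf S_n$ made of concatenations of the $N$ strips sharing the top $k$ rows with $\mu$'s orbit (the paper simply names this measure and declares the verification "fairly obvious," while you carry it out). The tower-of-height-$n$-over-a-full-$N$-shift computation, the deterministic projection giving $\hat h_k(\mu^{(k)})=0$, the harmonicity of $\hat h - \hat h_k$, and the weak-star estimate exploiting that $N(k)\to\infty$ together with the sparsity of the row-zero markers are all sound and fill in exactly the gaps the authors left as an exercise.
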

\begin{proof}
Namely, when $\mu$ is periodic supported by $\Per_n^*$, then  $\mu^{(k)}$ is the measure of maximal entropy among those in $\mathcal{M}_{\hat T} (\mathbf{S}_n)$ which have the same (periodic) projection on the top $k$ rows factor $X_k$ as $\mu$. The verification is fairly obvious and we skip it. 
\qed \end{proof}


\begin{lem}\label{mmf}
Conversely, if $\mu$ is supported by $\mathbf S_n$, then there exists a convex combination $\check\mu\in\mtx$ of the periodic measures supported by $\Per^*_n$, such that, for every $k$, 
$$
(\hat h-\hat h_k)(\mu)\le\mathfrak P_k(\check\mu).
$$
The weak-star distance between $\mu$ and $\check\mu$ does not exceed some $\gamma_n$ which tends to zero with $n$.
\end{lem}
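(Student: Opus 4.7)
The plan is to define $\check\mu$ from the frequencies of the strips from $\mathcal S_n$ under $\mu$, and then to verify the two assertions separately, using Abramov's formula for the entropy bound and a boundary-matching argument for the weak-star estimate. Let $F=\{\hat x\in\mathbf S_n:\hat x_{0,0}=0|\}$ be the clopen set of arrays carrying a dominant marker at column $0$ of row $0$; since these markers appear exactly $n$-periodically in $\mathbf S_n$, we have $\mu(F)=1/n$ and the first return time to $F$ under $\hat T$ is identically $n$. Decompose $F=\bigsqcup_{s\in\mathcal S_n}F_s$ according to which strip occupies the columns $[-n+1,0]$, put $\lambda_s=n\mu(F_s)$ (so that $\sum_s\lambda_s=1$), and let $\mu_s\in\mtx$ denote the periodic measure on the orbit in $\Per^*_n$ corresponding to $s$. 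Define $\check\mu=\sum_{s\in\mathcal S_n}\lambda_s\mu_s$, manifestly a convex combination of periodic measures in $\Per^*_n$.

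For the entropy inequality, I would apply Abramov's formula to the induced system on $F$ under $\hat T^n$, which is conjugate to the full shift on the alphabet $\mathcal S_n$ equipped with the normalized measure $\mu_G:=n\mu|_F$; restricting to the top $k$ rows (row $0$ is $n$-periodic and contributes no entropy) produces the analogous induced shift on $\mathcal S_n^k$ with measure $(\mu_G)_k=(\pi_k)_*\mu_G$, where $\pi_k:\mathcal S_n\to\mathcal S_n^k$ is the one-block truncation. Hence $(\hat h-\hat h_k)(\mu)=\tfrac1n[h_{\mu_G}(\sigma)-h_{(\mu_G)_k}(\sigma)]$, and the standard factor-map entropy inequality for a one-block code yields
$$
h_{\mu_G}(\sigma)-h_{(\mu_G)_k}(\sigma)\le\sum_{t\in\mathcal S_n^k}(\mu_G)_k(t)\log\#\pi_k^{-1}(t)=\sum_{s\in\mathcal S_n}\lambda_s\log N_k(s),
$$
where $N_k(s)$ denotes the number of strips in $\mathcal S_n$ sharing the top $k$ rows with $s$. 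Under the bijection $s\leftrightarrow x_s$ (the unique point placing $s$ at columns $[-n+1,0]$), $N_k(s)$ also counts the points in $\Per^*_n$ having the same $\P_k$-name as $x_s$, so $\tfrac1n\log N_k(s)=\mathfrak P_k(\mu_s)$. Dividing by $n$ and invoking the harmonic prolongation of $\mathfrak P_k$ gives $(\hat h-\hat h_k)(\mu)\le\mathfrak P_k(\check\mu)$.

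For the weak-star estimate, fix a compatible metric on $\mtxt$ via a countable family of continuous test functions, each depending on a finite coordinate rectangle. For such an $f$ with horizontal window of width $2N+1$, once $n>2N+1$ the window sits inside a single strip for a fraction $(n-2N-1)/n$ of all positions and straddles exactly two consecutive strips otherwise; moreover the row-$0$ marker column contributes at most $1/n$ to any discrepancy. Both $\int f\,d\mu$ and $\int f\,d\check\mu$ then reduce to the same $\lambda_s$-weighted average of $f$ on the ``interior'' positions, so their difference is bounded by $\|f\|_\infty\cdot O(N/n)$, uniformly in $\mu\in\mtsn$. Assembling these estimates with geometric weights over the countable test family produces the required quantity $\gamma_n\to 0$.

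The main obstacle is the entropy step: one has to justify the Abramov identifications for both $\hat h(\mu)$ and $\hat h_k(\mu)$, verify that the top-$k$-rows factor of the induced shift on $\mathcal S_n$ is indeed the one-block-coded shift on $\mathcal S_n^k$, and confirm the identification of $\sum_s\lambda_s\log N_k(s)$ with $n\mathfrak P_k(\check\mu)$ via the strip-to-periodic-point correspondence. Once this strip bookkeeping is in place, the weak-star part is a routine counting argument.
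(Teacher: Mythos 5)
Your construction of $\check\mu$ (via the weights $\lambda_s=n\mu(F_s)$, equal to the cylinder measures $\mu([s])$ used in the paper) is the same as the paper's, and you correctly fill in the verification that the paper declares ``fairly obvious'' and skips: the Abramov identification of $\hat h(\mu)$ and $\hat h_k(\mu)$ as $\tfrac1n$ times entropies of the full shift on $\mathcal S_n$ and its one-block factor on $\mathcal S_n^k$, the conditional-entropy bound $h(T,\mathcal P)-h(T,\mathcal Q)\le H(\mathcal P\,|\,\mathcal Q)\le\sum_t(\mu_G)_k(t)\log\#\pi_k^{-1}(t)$, the identification of $\tfrac1n\log N_k(s)$ with $\mathfrak P_k(\mu_s)$ via the strip$\leftrightarrow$periodic-point correspondence, and the boundary-counting argument for the weak-star estimate are all correct. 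This matches the paper's intended argument.
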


\begin{proof}
Namely, $\check\mu$ is the convex combination $\sum_{s\in\mathcal S_n}\mu([s])\cdot\mu_s$, where $\mu_s$ is the periodic measure supported by the orbit of the array obtained by concatenating repetitions of the strip $s$, and $[s]$ denotes the cylinder associated to the strip $s$. Again, the fairly obvious verification will be skipped.
\qed \end{proof}

\begin{proof}[Proof of Theorem \ref{charact}]
By Theorem \ref{env} and the duality Fact \ref{duality}, the extension entropy function $h^\pi$ in a symbolic extension with partial embedding  equals $h+u$ where $u$ is an affine repair function of the entropy tail structure $\{h-h_k\}$. The implication (1)$\implies$(2) will be proved once we show that $u=h^\pi-h$ is larger than or equal to $u_1$.

By Corollary \ref{comb}, the function $h^\pi$ in a symbolic extension with partial embedding coincides with the restriction to $\mtx$ of an affine superenvelope $\hat E$ of the entropy structure of the enhanced system. The duality Fact \ref{duality} implies that $\hat E=\hat h + \hat u$ where $\hat u$ is an affine repair function of the \sq\ of tails $\{\hat h-\hat h_k\}$. Since $\hat h|_{\mtx} = h$, we have $\hat u|_{\mtx}=u$.  By the second part of Lemma \ref{uone}, for any $\mu\in \mtx$ there exists a \sq\ $(\mu_k)$ in $\mtx$, converging to $\mu$, and such that 
$$
u_1(\mu)=\lim_k \mathfrak P_k(\mu_k)\le \lim_k(\hat h -\hat h_k)(\mu_k^{(k)}),
$$
where the latter inequality refers to measures $\mu_k^{(k)}$ produced for each $\mu_k$ by Lemma~\ref{mme}. Since these measures also converge to $\mu$, the first part of Lemma \ref{uone} yields that the right hand side above does not exceed
$$
\lim_k \widetilde{(\hat h - \hat h_k)}(\mu)\le \lim_k \widetilde{(\hat u+\hat h - \hat h_k)}(\mu)=\hat u(\mu) = u(\mu).
$$
This completes the proof of (1)$\implies$(2).
\smallskip

The implication (2)$\implies$(1) will be proved using two further lemmas.
\begin{lem}
Let $\{\hat\theta_k\}$ be a nondecreasing and pointwise converging to zero \sq\ of functions on a compact domain $\hat \M$. Let $\theta_k = \hat\theta_k|_{\M},\ \theta'_k = \hat\theta_k \mathbf 1_{\hat \M\setminus \M}$, where $\M$ is a compact subset of \, $\hat \M$. If \, $u$ repairs $\{\theta_k\}$ on $\M$, $u'$ repairs $\{\theta_k'\}$ on $\hat \M$ and $u\ge u'|_\M$, then $\hat u$ defined as $u$ on $\M$ and $u'$ on $\hat \M\setminus \M$ repairs $\{\hat\theta_k\}$. 
\end{lem}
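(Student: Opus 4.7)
The plan is to use the equivalent reformulation of the repair condition (equation \eqref{reper}): $u$ repairs $\{\theta_k\}$ iff for every $\mu$ in the domain and $\epsilon>0$ there is a threshold $K(\mu,\epsilon)$ such that $\overline{\lim}_i(u+\theta_k)(\mu_i)\le(u+\theta_k)(\mu)+\epsilon$ for all $k\ge K$ and all sequences $\mu_i\to\mu$. We must verify this for $\hat u$ and $\{\hat\theta_k\}$ at an arbitrary $\hat\mu\in\hat\M$ and arbitrary $\epsilon>0$. The first key observation is that $\M\subset\hat\M$ is compact, hence closed, so the complement $\hat\M\setminus\M$ is open.

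I would split into two cases according to whether $\hat\mu\in\M$ or $\hat\mu\in\hat\M\setminus\M$, and in each case split the approaching sequence $\hat\mu_i\to\hat\mu$ into the sub-sequence lying in $\M$ and the sub-sequence lying in $\hat\M\setminus\M$. On $\M$ one has $\hat u+\hat\theta_k=u+\theta_k$, and on $\hat\M\setminus\M$ one has $\hat u+\hat\theta_k=u'+\theta_k'$ (since $\theta_k'$ equals $\hat\theta_k$ off $\M$ and vanishes on $\M$). If $\hat\mu\in\hat\M\setminus\M$, openness forces the entire tail of $\{\hat\mu_i\}$ into $\hat\M\setminus\M$, so the repair property of $u'$ applied on $\hat\M$ gives the required bound $(u'+\theta_k')(\hat\mu)+\epsilon=(\hat u+\hat\theta_k)(\hat\mu)+\epsilon$ for $k\ge K(\hat\mu,\epsilon)$.

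The more delicate case is $\hat\mu\in\M$, where both sub-sequences may be infinite. For the part in $\M$, the repair property of $u$ applied on $\M$ (convergence stays in $\M$ by closedness) yields $\overline{\lim}_j(u+\theta_k)(\hat\mu_{i_j})\le u(\hat\mu)+\theta_k(\hat\mu)+\epsilon$ for large $k$. For the part in $\hat\M\setminus\M$, the repair property of $u'$ on $\hat\M$ gives
\[
\overline{\lim}_l(u'+\theta_k')(\hat\mu_{i_l})\le(u'+\theta_k')(\hat\mu)+\epsilon=u'(\hat\mu)+\epsilon,
\]
because $\theta_k'$ vanishes on $\M$. Here the hypothesis $u\ge u'|_\M$ is decisive: it yields $u'(\hat\mu)\le u(\hat\mu)\le u(\hat\mu)+\theta_k(\hat\mu)$ (recall $\theta_k\ge 0$), so the right-hand side is $\le(\hat u+\hat\theta_k)(\hat\mu)+\epsilon$. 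Merging the two sub-sequences produces the same bound for the whole sequence, establishing the repair condition for $\hat u$ at $\hat\mu$.

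The main (modest) obstacle is the bookkeeping around the definitions of $\theta_k$ and $\theta_k'$ at points straddling $\M$ and $\hat\M\setminus\M$; in particular the observation that $\theta_k'$ is \emph{zero} on $\M$ is what makes the comparison $u'(\hat\mu)\le u(\hat\mu)+\theta_k(\hat\mu)$ usable even when the approaching sequence arrives from outside $\M$. Once this is set up, the argument is a routine case analysis and does not require any further structural input beyond compactness of $\M$ and nonnegativity of the tails.
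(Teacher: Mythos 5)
Your proof is correct and follows the same approach as the paper: split on whether the limit point lies in $\M$ or its (open) complement, reduce to the repair properties of $u$ and $u'$, and in the crucial mixed case exploit that $\theta_k'$ vanishes on $\M$ together with $u\ge u'|_\M$ and $\theta_k\ge 0$. The only (small) difference is that you explicitly treat sequences straddling both $\M$ and $\hat\M\setminus\M$ by splitting into subsequences and taking the larger limsup, a step the paper leaves implicit.
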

\begin{proof}
We will verify the repair condition \eqref{reper} for $\hat u$ and $\{\hat\theta_k\}$. Let $\mu_i\to\mu$ in $\hat \M$. If $\mu\in \hat \M\setminus \M$ then $\mu_i\in \hat \M\setminus \M$ for large $i$ and the repair condition follows from the properties of $u'$. If both $\mu\in \M$ and $\mu_i$ are in $\M$, the condition follows from the properties of $u$. In the remaining case $\mu\in \M, \mu_i\in \hat \M\setminus \M$ we have, for large~$k$,
$$
\overline{\lim_i}\,(\hat u+\hat\theta_k)(\mu_i)= \overline{\lim_i}\,(u'+\theta'_k)(\mu_i) \le (u'+\theta'_k)(\mu)+\epsilon \le u(\mu)+\epsilon\le
(\hat u+\hat \theta_k)(\mu)+\epsilon. 
$$
\qed \end{proof}

\begin{lem}
On the (compact) set $\hat \M=\mtx\cup\bigcup_n\mtsn$ (with $\M=\mtx$) 
define $u'$ as $u_1=\lim_k \widetilde{\mathfrak P}_k$ on $\M$ and zero otherwise. Then $u'$ 
repairs the \sq\ of entropy tails $\theta'_k=(\hat h-\hat h_k)\mathbf 1_{\hat \M\setminus \M}$ . 
\end{lem}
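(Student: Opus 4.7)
The plan is to verify the equivalent repair condition \eqref{reper} directly: for every $\mu\in\hat\M$ and every $\epsilon>0$ I must find $k_0$ such that
\[
\overline{\lim}_i(u'+\theta'_k)(\mu_i)\le(u'+\theta'_k)(\mu)+\epsilon
\]
for all $k\ge k_0$ and all sequences $\mu_i\to\mu$ in $\hat\M$. A preliminary topological observation, used throughout, is that the cylinder $C\subset\hat X$ specified by ``a marker at position $0$ in the row number zero'' is clopen, so $\mu\mapsto\mu(C)$ is continuous on $\hat\M$; its value is $1/n$ on $\mtsn$ and $0$ on $\M$. Consequently, if $\mu_i\in\mathcal M_{\hat T}(\mathbf S_{n_i})$ and $\mu_i\to\mu\in\M$ then $n_i\to\infty$, while a sequence converging to a point of $\mtsn$ must lie in $\mtsn$ eventually, and no sequence from $\M$ converges into $\bigcup_n\mtsn$.

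\emph{Case $\mu\in\mtsn$.} By the observation, $\mu_i\in\mtsn$ eventually, so $u'$ vanishes at $\mu$ and at these $\mu_i$. Since $\Per^*_n$ is finite, for $k$ beyond some $k_0(n)$ the partition $\P_k$ separates the points of $\Per^*_n$, hence distinguishes the strips of $\mathcal S_n$; the projection of $\mathbf S_n$ onto its top rows factor is then injective, yielding a conjugacy and therefore $\hat h_k=\hat h$ identically on $\mtsn$. Thus $\theta'_k$ vanishes on $\mtsn$ for such $k$ and both sides of the repair inequality are zero.

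\emph{Case $\mu\in\M$.} The right hand side equals $u_1(\mu)$. I split the approaching sequence into $I_1=\{i:\mu_i\in\M\}$ and $I_2=\{i:\mu_i\in\mathcal M_{\hat T}(\mathbf S_{n_i})\}$, where by the preliminary observation $n_i\to\infty$ along $I_2$. Along $I_1$, $(u'+\theta'_k)(\mu_i)=u_1(\mu_i)$, and $u_1$ is usc as a nonincreasing limit of the \uscf s $\widetilde{\mathfrak P}_k$, whence $\overline{\lim}_{i\in I_1}u_1(\mu_i)\le u_1(\mu)$. Along $I_2$ I apply Lemma~\ref{mmf} to obtain companion measures $\check\mu_i\in\mtx$ with $\check\mu_i\to\mu$ (since $|\mu_i-\check\mu_i|\le\gamma_{n_i}\to 0$) satisfying $(\hat h-\hat h_k)(\mu_i)\le\mathfrak P_k(\check\mu_i)$ for every $k$, so by definition of the usc envelope
\[
\overline{\lim}_{i\in I_2}(u'+\theta'_k)(\mu_i)\le\overline{\lim}_{i\in I_2}\mathfrak P_k(\check\mu_i)\le\widetilde{\mathfrak P}_k(\mu).
\]
The defining nonincreasing convergence $\widetilde{\mathfrak P}_k(\mu)\searrow u_1(\mu)$ then allows me to pick $k$ so large that $\widetilde{\mathfrak P}_k(\mu)\le u_1(\mu)+\epsilon$, giving the desired inequality.

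The main obstacle is the $I_2$ sub-case: the entropy tail $(\hat h-\hat h_k)(\mu_i)$ sits outside the domain of the period tail structure $\mathfrak P_k$, so a direct upper semicontinuity argument through $\widetilde{\mathfrak P}_k$ at $\mu$ is not available. Lemma~\ref{mmf} supplies exactly the required bridge, trading $\mu_i\in\mathcal M_{\hat T}(\mathbf S_{n_i})$ for a weak-star close $\check\mu_i\in\mtx$ and the desired tail inequality for every $k$ simultaneously; once this is in hand, only monotone convergence and the elementary usc-envelope inequality remain.
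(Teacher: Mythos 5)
Your proof is correct and follows essentially the same three-way case analysis as the paper: the topological observation that sequences cannot cross between $\M$ and $\bigcup_n\mtsn$ (which the paper phrases via openness of $\mathbf S_n\setminus X$, you via continuity of $\mu\mapsto\mu(C)$), the use of expansiveness of $\mathbf S_n$ to kill $\theta'_k$ there, upper semicontinuity of $u_1$ for sequences inside $\M$, and Lemma~\ref{mmf} combined with $\widetilde{\mathfrak P}_k\searrow u_1$ for the mixed case. The only cosmetic difference is that in the $\mu\in\mtsn$ case you derive the stronger conclusion $\hat h_k=\hat h$ eventually, whereas the paper merely invokes upper semicontinuity of $\hat h-\hat h_k$ on $\mtsn$; both suffice.
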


\begin{proof}
We need to verify the repair condition \eqref{reper} for $\mu_i\to\mu$ in $\hat \M$. If $\mu\notin\mtx$ then both $\mu$ and $\mu_i$ with large $i$ belong to $\mtsn$ for some $n$ (the sets $\mathbf S_n\setminus X$ are open). Since $\mathbf S_n$ is expansive, $\{\hat h-\hat h_k\}$ is upper semicontinuous on $\mtsn$, hence $u'=0$ verifies the repair condition. If both $\mu$ and all $\mu_i\in \M$ then, since $u_1$ is \usc, we have
$$
\overline{\lim_i}\,(u'+\theta'_k)(\mu_i)= \overline{\lim_i}\, u_1(\mu_i)\le u_1(\mu)=(u'+\theta'_k)(\mu).
$$
In the case $\mu\in \M$ and $\mu_i\in \hat \M\setminus \M$ we have $\mu_i\in \mathcal M_{\hat T}(\mathbf S_{n_i})$, where the indices $n_i$ grow to infinity. Then we invoke the measures $\check\mu_i$ provided for $\mu_i$ by Lemma~\ref{mmf}. They are supported by $X$ and also converge to $\mu$, thus
$$
\overline{\lim_i}\, (u'+\theta'_k)(\mu_i) = \overline{\lim_i}\, \theta'_k(\mu_i)
\le \overline{\lim_i}\, \mathfrak P_k(\check\mu_i)\le \widetilde{\mathfrak P}_k(\mu)\le u_1(\mu)+\epsilon = 
(u'+\theta'_k)(\mu)+\epsilon, 
$$ 
for $k$ large enough.
\qed \end{proof}

Now let $E\ge h+u_1$ be an affine superenvelope of the entropy structure of \xt. As already noticed, $E=h+u$ where $u$ is a repair function for the entropy tails on $X$, larger than or equal to $u_1$. Combining the above two lemmas, we get that then $\hat u$, defined as $u$ on $\mtx$ and zero on $\hat \M\setminus \M$, repairs the entropy tails $\{\hat h-\hat h_k\}$ restricted to $\hat\M=\mtx\cup\bigcup_n\mtsn$. Since $u$ is affine, being \usc, it is harmonic on $\mtx$. Since $\mtx$ is a \emph{face} of $\mtxt$ (i.e. a sub-simplex spanned by a subset of extreme points), this easily implies that $u$ coincides with the restriction to $\mtx$ of the harmonic prolongation ${\hat u}^{\mathsf{har}}$ of $\hat u$ onto the entire set $\mtxt$. Note that if we prolong $\hat u$ to $\hat u_0$ by assigning zero to all measures in $\mtxt$ on which $\hat u$ is still undefined, then we obtain a convex and \usc\ function, which has the same harmonic prolongation as $\hat u$.\footnote{For the harmonic prolongation only the values at extreme points matter and the domain of $\hat u$ contains all ergodic measures of \xtt.} By \cite[Fact A.2.20 ]{Do11}, ${\hat u}^{\mathsf{har}} = {\hat u_0}^{\mathsf{har}}$ is upper semicontinuous. Also, ${\hat u}^{\mathsf{har}}$ coincides with $\hat u$ on the set $\hat\M$ which contains the closure of all ergodic measures of \xtt. Since $\hat u$ repairs $\{\hat h-\hat h_k\}$ on $\hat \M$ it follows from \cite[Lemma 8.2.14]{Do11} that ${\hat u}^{\mathsf{har}}$ is an affine repair function for $\{\hat h-\hat h_k\}$ on the entire set $\mtxt$. As we have already argued, $\hat E = h+ {\hat u}^{\mathsf{har}}$ represents an extension entropy function in a symbolic extension of the enhanced system, and its restriction to $\mtx$ thus represents an extension entropy function in a symbolic extension of \xt\ with a partial embedding. Since, as noticed earlier, ${\hat u}^{\mathsf{har}}|_{\mtx}=u$, the above restriction equals $h+u=E$, 
which establishes the implication (2)$\implies$(1).  
\qed \end{proof}

\section{Final remarks and examples}

In this section we state some consequences of Theorem \ref{charact} combined with the general theory of symbolic extensions and we provide examples showing that the characterization in Theorem \ref{charact} cannot be essentially simplified, i.e., expressed in a more direct way. For simplicity, we assume that 
$\Per^*_n = \Per_n(X,T)$ for each $n$ (i.e., we will focus on symbolic extension with a \emph{complete} embedding), although analogous notions, statements and examples can be produced for partial embedding as well. We introduce the following notions:

\begin{defn}{\color{white}{.}}
\begin{itemize}
	\item Let $\hemb$ denote the function on $\mtx$ defined as the pointwise infimum of $h^\pi$  
	over all symbolic extensions with an embedding,
	\item let $\hemb(X,T)$ be the infimum of $\htop(Y,S)$ over all symbolic extensions with an embedding,
	\item let $\mathfrak P^*(X,T) =\lim_k \sup_{x\in X}\mathfrak P_k(x) = \lim_k \sup_{\mu\in\mtx}\mathfrak P_k(\mu)$.
\end{itemize}
\end{defn}

Since $\mathfrak P_k$ and $\widetilde{\mathfrak P}_k$ have the same supremum, and $\{\widetilde{\mathfrak P}_k\}$ is a nonincreasing sequence of \usc\ functions we may swap the supremum and the limit (see e.g. \cite[Proposition 2.4]{BD05}), so that we obtain
$$
\mathfrak P^*(X,T) = \lim_k\sup_{\mu\in\mtx}\widetilde{\mathfrak P}_k(\mu) = \sup_{\mu\in\mtx}\lim_k\widetilde{\mathfrak P}_k(\mu) = \sup_{\mu\in\mtx} u_1(\mu).
$$
Thus our notation $\mathfrak P^*(X,T)$ stands in analogy to the notion of tail entropy\footnote{Tail entropy is known mainly under the confusing name of ``\tl\ conditional entropy''. It turns out (see \cite{Do05}) that $h^*(X,T)$ is equal to $\sup u^{\mathcal H}_1$, where $u^{\mathcal H}_1$ is computed for the tails of the entropy structure as $u^{\mathcal H}_1=\lim_k(\widetilde{h-h_k})$.} $h^*(X,T)$ defined by Misiurewicz and we will call it the \emph{tail period capacity}. A corresponding \emph{variational principle for the embedding entropy} also holds:
$$
\hemb(X,T) = \sup_{\mu\in\mtx}\hemb(\mu).
$$
The proof follows the same scheme as the proof of the ``symbolic extension entropy variational principle''
(see \cite{BD05} or the book \cite{Do11}). Since we will not use it, we refrain from providing a detailed proof (which requires introducing a few more notions). Instead, we prove some more useful estimates:

\begin{fact}\label{hemb}One has the following inequalities:
\begin{gather*}
\max\{\hsex,h+u_1\}\le \hemb\le \hsex+u_1,\\
\max\{\hsex(X,T),\mathfrak P^*(X,T)\}\le\hemb(X,T)\le\hsex(X,T)+\mathfrak P^*(X,T).
\end{gather*}
\end{fact}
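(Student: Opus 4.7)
The plan is to derive the pointwise inequalities first and then the global ones directly (avoiding the variational principle for embedding entropy, which the paper states but does not prove). By the characterization in Theorem \ref{charact}, every extension entropy function $h^\pi$ of a symbolic extension with embedding is an affine superenvelope of $\{h_k\}$ satisfying $h^\pi\ge h+u_1$; taking pointwise infima yields $\hemb\ge h+u_1$ directly and $\hemb\ge\hsex$ (the infimum over a strictly larger class of affine superenvelopes, without the constraint $\ge h+u_1$), hence $\hemb\ge\max\{\hsex,h+u_1\}$. For the global version, any symbolic extension $\pi$ with embedding satisfies $\htop(Y)=\sup_{\mu}h^{\pi}(\mu)\ge\sup_{\mu}(h(\mu)+u_1(\mu))\ge\sup_{\mu}u_1(\mu)=\mathfrak P^*(X,T)$ (using $h\ge 0$) and trivially $\htop(Y)\ge\hsex(X,T)$; taking infima over $\pi$ gives the global lower bound.

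For the upper bounds, the key structural claim is that $u_1$ is bounded, concave, and upper semicontinuous on the Choquet simplex $\mtx$, and therefore coincides with the pointwise infimum of its affine continuous majorants (by Hahn--Banach separation of its hypograph from points above). Indeed, each $\mathfrak P_k$ is by construction affine on $\mtx$ (harmonic prolongation from ergodic measures), and the upper semicontinuous envelope of an affine function on a compact convex set is concave: if $\nu_i^{(n)}\to\mu_i$ realize $\widetilde{\mathfrak P}_k(\mu_i)$, then affinity gives $\mathfrak P_k(t\nu_1^{(n)}+(1-t)\nu_2^{(n)})\to t\widetilde{\mathfrak P}_k(\mu_1)+(1-t)\widetilde{\mathfrak P}_k(\mu_2)$ for $\mu=t\mu_1+(1-t)\mu_2$, proving concavity of $\widetilde{\mathfrak P}_k$; the decreasing limit $u_1$ inherits both properties, and is bounded under the (otherwise trivial) assumption $\mathfrak P^*(X,T)<\infty$. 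With this in hand, the pointwise upper bound proceeds as follows: fix $\mu$ and $\epsilon>0$, choose an affine continuous $v\ge u_1$ with $v(\mu)<u_1(\mu)+\epsilon$ and an affine superenvelope $E_0$ with $E_0(\mu)<\hsex(\mu)+\epsilon$. Then $E:=E_0+v$ is affine, $E-h_k=(E_0-h_k)+v$ is a sum of nonnegative upper semicontinuous functions so $E$ is an affine superenvelope, and $E\ge h+v\ge h+u_1$. By Theorem \ref{charact}, $E=h^\pi$ for some symbolic extension with embedding, giving $\hemb(\mu)\le E(\mu)<\hsex(\mu)+u_1(\mu)+2\epsilon$; letting $\epsilon\to 0$ yields $\hemb\le\hsex+u_1$.

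For the global upper bound I argue similarly: given $\epsilon>0$, pick a symbolic extension $\pi_0:(Y_0,S_0)\to(X,T)$ with $\htop(Y_0)<\hsex(X,T)+\epsilon$, set $E_0=h^{\pi_0}$ (an affine superenvelope with $\sup E_0=\htop(Y_0)$), and take $v\equiv\mathfrak P^*(X,T)$, a constant (hence affine) majorant of $u_1$. The argument of the previous paragraph produces a symbolic extension $\pi$ with embedding realizing $E_0+v$ as $h^\pi$; its topological entropy equals $\sup(E_0+v)=\sup E_0+\mathfrak P^*(X,T)<\hsex(X,T)+\mathfrak P^*(X,T)+\epsilon$, and letting $\epsilon\to 0$ gives the global upper bound. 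The main obstacle is establishing concavity of $u_1$, which ensures it is the pointwise infimum of affine majorants on the Choquet simplex; without this property the localization used in the pointwise upper bound (approximating $u_1$ by an affine $v$ at a prescribed $\mu$ while maintaining $v\ge u_1$ globally) would break down.
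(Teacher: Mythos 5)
Your proof is correct and follows essentially the same route as the paper's: both arguments hinge on the observation that $u_1$, being a decreasing limit of the concave usc functions $\widetilde{\mathfrak P}_k$ (concavity coming from the usc envelope of an affine function), is the pointwise infimum of its affine majorants $G\ge u_1$, so that $E+G$ remains an affine superenvelope dominating $h+u_1$ whenever $E$ is an affine superenvelope, after which Theorem \ref{charact} and the additivity of suprema give both upper bounds. The only cosmetic difference is that you work with \emph{continuous} affine majorants via Hahn--Banach where the paper only insists on usc affine ones, and you spell out the $\epsilon$-approximation and the concavity check that the paper treats as immediate.
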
 
\begin{proof} The left hand side inequalities follow directly from Theorem \ref{charact}. Since each function $\widetilde{\mathfrak P}_k$ \usc\ and concave (the \usc\ envelope of an affine function is always concave) we can write $u_1$ as a pointwise infimum of \usc\ affine functions $G$. Whenever $E$ is an affine superenvelope of $\{h_k\}$, so are the sums $E+G$, and clearly they are larger than or equal to $h+u_1$. Taking infimum over $E$ and $G$ we get the first right hand side inequality. The second one is now trivial since the supremum of a sum of two functions does not exceed the sum of their suprema.
\qed \end{proof}

And this is actually all that can be said in full generality. The inequalities cannot be improved in general. We will provide examples in which the function $\hemb$ actually equals $h^\pi$ in a suitable extension and realizes the following cases: ($f\lneqq g$ means $f\le g$ and $f(x)<g(x)$ at some point)
\begin{itemize}
	\item In Example \ref{example2}: 
	\begin{gather*}
	\max\{\hsex, h+u_1\}\lneqq\hemb = \hsex+u_1,\\ 
	\max\{\hsex(X,T), \mathfrak P^*(X,T)\}<\hemb(X,T) =  
	\hsex(X,T)+\mathfrak P^*(X,T);
	\end{gather*} 
	\item In Example \ref{example3}:
	\begin{gather*}
	\max\{\hsex, h+u_1\}=\hemb\lneqq\hsex+u_1,\\ 
	\max\{\hsex(X,T), \mathfrak P^*(X,T)\}=\hemb(X,T) <  
	\hsex(X,T)+\mathfrak P^*(X,T);
	\end{gather*} 
	\item We will show that in the old Example \ref{example1}, $\hemb$ is not equal to $h$ plus the minimal 
	repair function of any tail structure combining $\{h-h_k\}$ and $\mathfrak P_k$, in particular of 
	$\{h-h_k+\mathfrak P_k\}$ or of $\{\max\{h-h_k,\mathfrak P_k\}\}$. In fact, in this example we have 
	$h\equiv0$ while $u_1$ alone is not a repair function of the tail structure $\{\mathfrak P_k\}$. 
\end{itemize}
All this means that the way in which the act of embedding periodic points in a symbolic extension affects the function $\hsex$ (to become $\hemb$) is complicated and difficult to predict. It seems that the characterization in Theorem \ref{charact} is indeed the best possible.

\begin{exam}\label{example2}
Let $Y_0$ be a uniquely ergodic subshift of positive entropy $h_0$, and denote by $\mu$ its unique \im. 
Fix some $y_0\in Y_0$. By unique ergodicity, $y_0$ is generic for $\mu$. Let $y_p$ be the periodic \sq\ obtained by repeating the initial block of length $p$ of $y_0$ (we can assume that the minimal period of $y_p$ is $p$). Let $Y$ be the smallest subshift containing $Y_0$ and each $y_p$. The ergodic measures of $Y$
are $\mu$ and a \sq\ of periodic measures converging to $\mu$. Now let $X$ be the array system consisting of arrays $x$ of the following form: $x$ has at most two nonzero rows: first of them, number $m$, contains an element $y$ of $Y$, the second one, present only if $y$ is periodic with minimal period $p$ (i.e., when $y$ is in the orbit of $y_p$), number $m+p$, contains another (arbitrary, say over $\{0,1\}$) periodic \sq\ with minimal period $p$. The structure of ergodic measures of $X$ is as follows: there are ``clusters'' of periodic measures $\mu_{m,p,i}$ of period $p$, supported by arrays with two nonzero rows, $m$ and $m+p$ (the index $i$ enumerates all periodic patterns with minimal period $p$). As $p$ grows, these measures converge (regardless of $i$) to measures $\mu_m$ supported by arrays with only one nonzero row (number $m$) which contains an element of $Y_0$. As $m$ grows, all measures $\mu_{m,p,i}$ and $\mu_m$ tend to $\mu_0$---the pointmass at the fixpoint (the array of zeros). Note that the set of ergodic measures is closed, so in order to study affine superenvelopes of \sq s of affine functions on $\mtx$, it suffices to study arbitrary superenvelopes on the set of ergodic measures (the situation is shown on Figure \ref{fexample2}).
\begin{figure}[ht]
\begin{center}
\includegraphics[width=7cm,height=6cm]{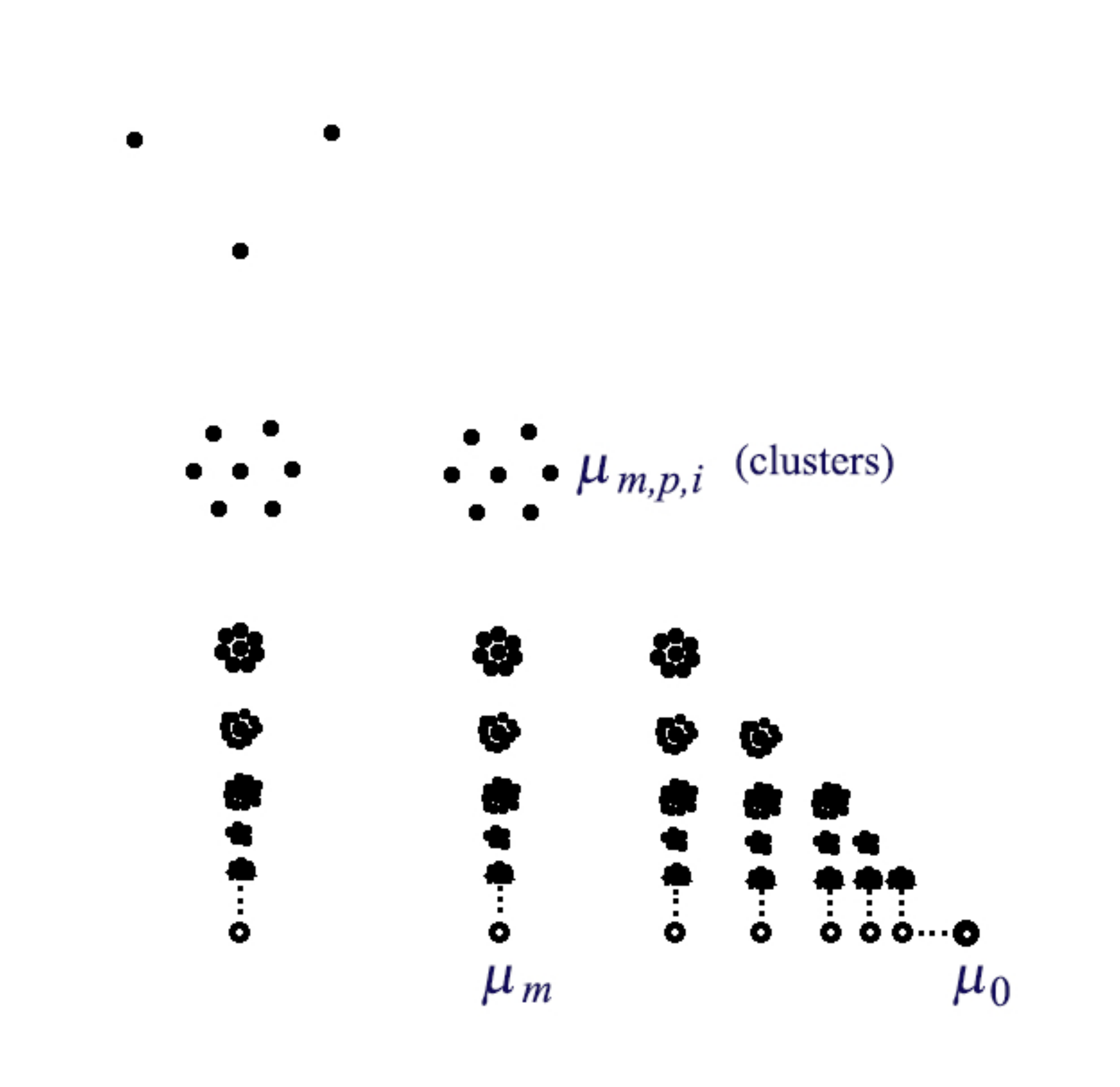}
\caption{\small\label{f9} The set of ergodic measures in the example.\label{fexample2}}
\end{center}
\end{figure}

All measures $\mu_m$ have entropy $h_0$, and $(h-h_k)(\mu_m)=0$ whenever $k\ge m$ and $h_0$ otherwise. This easily implies that the minimal repair function $u$ of $\{h-h_k\}$ equals zero except at $\mu_0$ where it equals $h_0$, and $\hsex = h+u$ equals $h_0$ on the measures $\mu_m$, $\mu_0$ and clearly zero on each $\mu_{m,p,i}$. 

On the other hand, the functions $\mathfrak P_k$ nearly equal $\log 2$ at each $\mu_{m,p,i}$ whenever $k\le m+p$ (nearly, because in row $m+p$ we must not use blocks of length $p$ whose repetitions have minimal period smaller than $p$), which easily implies that $u_1 = \lim_k\widetilde{\mathfrak P}_k$ equals $\log 2$ at each measure $\mu_m$, and, by upper semicontinuity also at $\mu_0$ (and zero at other ergodic measures). In particular $\mathfrak P^*(X,T)=\log 2$.

Now let $u$ be a repair function of $\{h-h_k\}$ larger than or equal to $u_1$, i.e., larger than or equal to $\log 2$ on the measures $\mu_m$ and $\mu_0$. Then $(u+h-h_k)(\mu_m)\ge \log 2+h_0$ for $m>k$ implying,
by the repair condition, that $u(\mu_0) \ge \log 2 + h_0$. Thus
$$
\hemb(\mu_0) = h_0 +\log 2= \hsex(\mu_0)+u_1(\mu_0)>\max\{\hsex(\mu_0),h(\mu_0)+u_1(\mu_0)\}.
$$  
At other ergodic measures we have $\hsex+u_1=\max\{\hsex,h+u_1\}$ (we skip the easy verification), so $\hemb$ equals both sides. Nonetheless, the topological notions depend only on $\mu_0$ and we have
$$
\hemb(X,T) = h_0 +\log 2= \hsex(X,T)+\mathfrak P^*(X,T)>\max\{\hsex(X,T),\mathfrak P^*(X,T)\}.
$$ 
\end{exam}

\begin{exam}\label{example3}
This example is a simplification of the preceding one. The system $X$ consists of arrays with only one nontrivial row, number $m$, which contains either an element of $Y_0$ or an arbitrary periodic \sq\ over $\{0,1\}$, of minimal period $m$. There are now ergodic measures $\mu_m$ of entropy $h_0$ tending to $\mu_0$ and clusters of periodic measures $\mu_{m,i}$ also tending (with increasing $m$, regardless of $i$) to $\mu_0$. The entropy structure is basically the same as in the preceding example, in particular $\hsex(\mu_0)=h_0$. However, this time $u_1$ equals $\log 2$ only at $\mu_0$ (and zero otherwise). The function $u$ equal to $\max\{h_0,\log 2\}$ at $\mu_0$ (and zero at other ergodic measures) is larger than or equal to $u_1$, and repairs the tails of the entropy structure, thus
$$
\hemb(\mu_0) = \max\{\hsex(\mu_0),h(\mu_0)+u_1(\mu_0)\}< \hsex(\mu_0)+u_1(\mu_0).
$$
At other ergodic measures $\max\{\hsex,h+u_1\}= \hsex+u_1$ (because $u_1=0$), so that $\hemb$ equals both sides. Nonetheless, we have
$$
\hemb(X,T) = \max\{h_0,\log 2\}= \max\{\hsex(X,T),\mathfrak P^*(X,T)\}<\hsex(X,T)+\mathfrak P^*(X,T).
$$ 
\end{exam}

\smallskip\noindent
\emph{Example \ref{example1}, continuation}. Recall that there are measures $\mu_{m,i,j,l}$ converging
with $j$ (and regardless of $l$) to measures $\mu_{m,i}$, which in turn converge with $m$ (regardless of $i$) to $\mu_0$. In the second (more sophisticated) symbolic extension with an embedding, $h^\pi$ equals zero at each $\mu_{m,i,j,l}$, and $\log 2$ on each $\mu_{m,i}$ and on $\mu_0$. It is elementary to see that the period tail structure in this example is as follows: $\mathfrak P_{k}(\mu_{m,i,j,l}) = \log 2$ for $k<j$ and $0$ for $k\ge j$. Similarly, $\mathfrak P_{m}(\mu_{m,i}) = \log 2$ for $k<m$, and zero otherwise. All these functions are zero at $\mu_0$. This period tail structure has the same form as the ``pick up sticks game 3'' on page 232 in \cite{Do11} except that single points must be replaced by clusters of measures having common indices $k$ and $j$. As explained in the book, $u_1$ equals $0$ at all measures $\mu_{k,i,j,l}$ and $\log 2$ at all measures $\mu_{k,i}$ and $\mu_0$ (hence it matches the above $h^\pi$), and this function \emph{does not repair} the period tail structure. The order of accumulation of this structure is 2 and the smallest repair function is $u_2$ which assumes the value $2\log 2$ at $\mu_0$ (see figure on page 234 in \cite{Do11}). This function actually matches $h^\pi$ in the first (more obvious) symbolic extension with an embedding. Because in this example $h=h_k=0$ for all $k$, $h+u_1$ is not a repair function of neither $\{h-h_k+\mathfrak P_k\}$ nor $\{\max\{h-h_k,\mathfrak P_k\}\}$, nor any other (reasonable) combination of these two tail structures.
\bigskip

We investigate now the optimal cardinality of a uniform generator. 
Recall that it equals the optimal size of an alphabet $\Lambda$ of a symbolic extension $(Y,S)$ with an embedding. As we have already noticed, in the aperiodic case this cardinality equals the least integer $l$ with $\log l>\hsex(X,T)$, that is $\#\Lambda = \lfloor2^{\hsex(X,T)}\rfloor+1$. In presence of periodic points we have:
 
\begin{thm}\label{sizea}
Let \xt\ be a topological dynamical system admitting a uniform generator. Then the optimal cardinality of a uniform generator equals
$$
\#\Lambda=\lfloor2^{\max\{\psup (X,T),\hemb(X,T)\}}\rfloor+1,
$$
which does not exceed $\lfloor2^{\max\{\psup(X,T),\hsex(X,T)+\mathfrak P^*(X,T)\}}\rfloor+1$.
\end{thm}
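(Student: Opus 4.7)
The second inequality in the statement, reflecting $\hemb(X,T)\le \hsex(X,T)+\mathfrak P^*(X,T)$ from Fact \ref{hemb}, is immediate, so I focus on the equality for the optimal cardinality. For the \emph{lower bound}, suppose $\P$ is a uniform generator of cardinality $\ell$ and let $\pi:(Y,S)\to(X,T)$ be the associated symbolic extension with embedding over an alphabet $\Lambda$ of size $\ell$ supplied by Theorem \ref{sg}. Two constraints force $\log\ell$ to be large: first, the embedding $\psi:X\to Y$ injects $\Per_n(X,T)$ into $\Per_n(Y,S)$, of cardinality at most $\ell^n$, giving $\psup(X,T)\le\log\ell$; second, by definition $\hemb(X,T)\le\htop(Y,S)\le\log\ell$. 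Moreover, $Y$ is necessarily a \emph{proper} subshift of $\Lambda^\Z$---a full shift has periodic points for every period in maximal quantity, whereas $Y$ reflects the specific, strictly smaller periodic structure of $X$ combined with an aperiodic bulk---and a proper subshift of $\Lambda^\Z$ excludes some word and therefore has topological entropy strictly less than $\log\ell$. This upgrades the entropy inequality to strict, whence $\ell\ge\lfloor 2^{\max\{\psup(X,T),\,\hemb(X,T)\}}\rfloor+1$.

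For the \emph{upper bound}, fix $\epsilon>0$. Theorem \ref{charact} provides an affine superenvelope $E\ge h+u_1$ of the entropy structure with $\sup E\le \hemb(X,T)+\epsilon$, together with a symbolic extension $\pi:(Y,S)\to(X,T)$ with full embedding (here $\Per^*_n=\Per_n(X,T)$) such that $h^\pi=E$, whence $\htop(Y,S)=\sup E\le\hemb(X,T)+\epsilon$. The constructed $Y$ consists of an aperiodic bulk---which carries all the topological entropy, since periodic orbits contribute zero---together with the countable collection of isolated periodic orbits realizing the embedding of $\Per_n(X,T)$ (at most $\#\Per_n(X,T)\le 2^{n\psup(X,T)}$ of each period $n$). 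Applying Krieger's theorem (Remark \ref{kr}) to the aperiodic bulk yields a conjugate representation over $\lfloor 2^{\htop(Y,S)}\rfloor+1$ symbols; grafting back the prescribed periodic orbits demands only that the alphabet satisfy $\ell^n\ge\#\Per_n(X,T)$ for each $n$, i.e., $\log\ell\ge\psup(X,T)$. Arranging the Krieger step to reserve the necessary spare letters yields a conjugate subshift with alphabet of size
\[
\lfloor 2^{\max\{\psup(X,T),\,\htop(Y,S)\}}\rfloor+1 \;\le\; \lfloor 2^{\max\{\psup(X,T),\,\hemb(X,T)+\epsilon\}}\rfloor+1,
\]
and for $\epsilon$ sufficiently small the right-hand side stabilizes at $\lfloor 2^{\max\{\psup(X,T),\,\hemb(X,T)\}}\rfloor+1$.

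The delicate point is this last recoding step. Krieger's theorem treats only aperiodic subshifts, whereas our $Y$ comes equipped with a prescribed countable family of periodic orbits that must survive the recoding and continue to serve as the embedded images of $\Per_n(X,T)$. One way forward is to surgically excise the prescribed orbits, recode the aperiodic remainder via Krieger, and reinsert the orbits using the spare letters provided by the integer rounding in $\lfloor 2^h\rfloor+1$, taking care that no parasitic periodic orbit produced by Krieger's recoding collides with a prescribed one; a cleaner alternative is to tailor the oracle used in the construction of Theorem \ref{charact} so that the resulting $Y$ already lives on an alphabet of the target size, making a subsequent Krieger step either trivial or unnecessary.
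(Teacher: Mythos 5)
Your proposal takes a genuinely different route from the paper's, and the deviation introduces gaps that are not resolved.

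The paper's proof is short and proceeds \emph{via the enhanced system} $(\hat X,\hat T)$: given a symbolic extension $\pi:(Y,S)\to(X,T)$ with an embedding, Lemma \ref{ensex} lifts it to a finitary extension $\hat\pi:(\hat Y,\hat S)\to(\hat X,\hat T)$ whose topological entropy is exactly $\max\{\sup_n\htop(\mathbf S_n),\htop(Y,S)\}=\max\{\psup(X,T),\htop(Y,S)\}$. By Serafin's theorem (finitary extensions realize the same extension entropy functions as continuous ones), one obtains $\hsex(\hat X,\hat T)\le\max\{\psup(X,T),\hemb(X,T)\}$, and the cardinality bound then follows from Remark \ref{kr} applied through the machinery of Section 4 (Theorems \ref{main} and \ref{odwr}), which routes the Krieger recoding through the \emph{aperiodic} extension $\hat X'$ of $\hat X$ so that Krieger's theorem applies cleanly. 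Thus the combinatorial constraint coming from periodic points is absorbed into $\hsex(\hat X,\hat T)$ before any recoding takes place, and no surgery on periodic orbits is ever required.

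Your proposal, by contrast, never mentions the enhanced system and instead tries to recode $Y$ directly. This creates two concrete gaps.

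\textbf{Lower bound.} You assert that $Y$ is ``necessarily a proper subshift of $\Lambda^\Z$'' because ``$Y$ reflects the specific, strictly smaller periodic structure of $X$.'' This is not a proof. Periodic points of $Y$ map onto periodic points of $X$, but the map need not be injective, so nothing prevents $Y$ from having, for some period $n$, the full set of $\ell^n$ points of period dividing $n$. The strict inequality $\htop(Y)<\log\ell$ and the possible strict inequality for $\psup$ therefore require an actual argument, which you do not supply. (In fact, note that at integer values of $2^{\psup}$ the claimed formula itself is delicate; the only source of strictness in the paper is Krieger's theorem applied to the aperiodic system $\hat X'$, which sidesteps this issue.)

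\textbf{Upper bound.} The plan to ``surgically excise the prescribed orbits, recode the aperiodic remainder via Krieger, and reinsert the orbits'' does not work as stated. Removing isolated periodic orbits from a subshift does not leave a subshift, so Krieger's theorem (which is a theorem about aperiodic \emph{subshifts}, conjugate up to a sliding block code) has no object to act on. Ensuring that no periodic orbit introduced by the Krieger recoding collides with a reinserted one is also unaddressed. The paper avoids this entirely: the aperiodic symbolic extension $\breve Y$ of $\hat X'$ produced by Theorem \ref{main} \emph{is} an honest aperiodic subshift, Krieger's recoding applies to it before the enhancement step of Theorem \ref{odwr} adds periodic orbits, and the added orbits are built out of words already present in the alphabet, so the alphabet never grows afterwards.

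In short, your intuition about the $\max$ balancing the periodic and aperiodic contributions is correct, but the route you propose has unresolved technical holes precisely where the paper uses the enhanced-system machinery to make the argument clean. To repair your proof you would essentially have to reintroduce $(\hat X,\hat T)$, Lemma \ref{ensex}, and Serafin's theorem, at which point you would be reproducing the paper's argument.
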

  
\begin{proof}
By Remark \ref{kr}, it is enough to see that 
$$
\hsex(\hat X,\hat T)\le\max\{\psup(X,T),\hemb(X,T)\}.
$$ 
Indeed if $\pi:(Y,S)\to(X,T)$ is a symbolic extension with an embedding then the finitary symbolic extension $\hat\pi:(\hat Y, \hat S)\to(\hat X,\hat T)$ obtained in Lemma \ref{ensex} has entropy equal to $\max\{\sup_n\htop(\mathbf S_n), \htop(Y,S)\}=\max\{\psup(X,T),\htop(Y,S)\}$. Then one concludes by Serafin's Theorem (or Theorem \ref{seraf}).
\qed \end{proof}

\begin{rem}
When $(X,T)$ is \emph{asymptotically expansive}, i.e., it is asymptotically $h$-expansive and $\mathfrak P^*(T)=0$, then we may choose a uniform generator of cardinality
$$
\#\Lambda=\lfloor2^{\max\{\psup(X,T),\htop(X,T)\}}\rfloor+1.
$$ 
Thus we recover the estimate obtained in \cite{Bu16}. 
\end{rem}

We will  say now a few concluding words about the applications for smooth dynamical systems. Recall that $C^\infty$ maps on compact manifolds are  asymptotically $h$-expansive \cite{Buz97}, thus the entropy function $h$ is a superenvelope of the entropy structure. The first author proved in a recent paper \cite{Bu16'}   that for a $C^\infty$ surface diffeomorphism $f:M\rightarrow M$ and for any $\delta>0$ we have $$\mathfrak P^*(M,f)=0,$$  when  the subset $\Per^*=\Per^*_\delta$ of selected periodic points  is given  by hyperbolic saddles with Lyapunov exponents $\delta$-away from zero.  In this case we get therefore as a consequence of Theorem \ref{charact}:

\begin{cor}
For any $C^\infty$ surface diffeomorphism $(M,f)$ and for any $\delta>0$ there exists a symbolic extension $\pi:(Y,S)\to(M,f)$ with partial embedding with respect to $\Per^*_\delta$  such that 
$h^\pi = h$.
\end{cor}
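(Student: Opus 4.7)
The plan is to verify the two hypotheses of Theorem~\ref{charact}(2) with the choice $E = h$, after which the desired symbolic extension with partial embedding satisfying $h^\pi = h$ follows immediately from that theorem.

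First I would invoke Buzzi's theorem \cite{Buz97}, which asserts that every $C^\infty$ map on a compact manifold---in particular a $C^\infty$ surface diffeomorphism---is asymptotically $h$-expansive. By the discussion in Sections~2.4 and 2.5, this means that the entropy structure $\{h_k\}$ converges uniformly to $h$, so $h$ itself is an upper semicontinuous superenvelope of $\{h_k\}$; since $h$ is always affine on the Choquet simplex $\mathcal{M}_f(M)$, it is in fact an affine superenvelope. This takes care of the first requirement of Theorem~\ref{charact}(2).

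Next I would invoke the recent result of the first author quoted just before the corollary: for $\Per^*_\delta$ consisting of hyperbolic saddles with Lyapunov exponents $\delta$-away from zero, one has $\mathfrak{P}^*(M,f) = 0$. Combining this with the identity $\mathfrak{P}^*(M,f) = \sup_{\mu \in \mathcal{M}_f(M)} u_1(\mu)$ (derived just before Fact~\ref{hemb}) and the nonnegativity of $u_1$, I conclude that $u_1 \equiv 0$ on $\mathcal{M}_f(M)$. Consequently $h \geq h + u_1$ trivially, and the second hypothesis of Theorem~\ref{charact}(2) is also satisfied. Applying Theorem~\ref{charact} then produces the symbolic extension $\pi:(Y,S)\to(M,f)$ with partial embedding with respect to $\Per^*_\delta$ and satisfying $h^\pi = h$.

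The main technical concern I anticipate is that Theorem~\ref{charact} is formulated for zero-dimensional systems, whereas a surface is two-dimensional. The standard way to bridge this gap is via Remark~\ref{smb}: one verifies that $(M,f)$ admits an isomorphic zero-dimensional extension, which follows from the small boundary property. For $C^\infty$ surface diffeomorphisms this holds by combining finiteness of the topological entropy with the fact that the set of periodic points of interest can be arranged to have dimension zero (cf.\ the remark after Fact~\ref{Gu}), so the entire argument can be run in a zero-dimensional model and the conclusion pushed back. This reduction, rather than the invocation of the characterization theorem itself, is really the only step requiring genuine care; once it is in place the corollary is a one-line consequence of the two cited inputs plus Theorem~\ref{charact}.
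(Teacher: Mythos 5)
Your proof follows exactly the same two-step strategy as the paper: Buzzi's theorem from \cite{Buz97} gives asymptotic $h$-expansiveness, so the entropy structure $\{h_k\}$ converges uniformly to $h$ and $h$ (always affine on the Choquet simplex $\mtx$) is an affine superenvelope; and the result from \cite{Bu16'} gives $\mathfrak P^*(M,f)=0$ for the choice $\Per^*_\delta$, hence $u_1\equiv 0$ since $\mathfrak P^*(M,f)=\sup_\mu u_1(\mu)$ and $u_1\ge 0$, so the inequality $h\ge h+u_1$ is trivially met. Theorem~\ref{charact}(2)$\Rightarrow$(1) then yields the desired symbolic extension with partial embedding. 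Up to this point your argument and the paper's are identical.

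You were also right to flag the zero-dimensionality reduction as the one step needing genuine care, since Theorem \ref{charact} (via Corollary \ref{comb}) is stated for zero-dimensional systems, or for systems with the small boundary property. However, the specific justification you propose does not establish it. The criterion in the remark after Fact \ref{Gu} (small boundary property when the system is finite-dimensional and the set of periodic points has dimension zero) refers to the \emph{entire} set $\Per(M,f)$, not to the selected subset $\Per^*_\delta$. The hyperbolic saddles in $\Per^*_\delta$ are indeed countable and hence zero-dimensional, but a $C^\infty$ surface diffeomorphism can have a much larger, even positive-dimensional, set of non-hyperbolic periodic points (e.g.\ a curve of fixed points), in which case the Kulesza-type criterion you invoke simply does not apply. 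The paper's own concluding paragraph does not address this reduction either --- it is presumably treated in \cite{Bu16'} --- so you are not missing something the authors supply here, but the route you offer through the periodic-set criterion does not close the gap.
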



\begin{thebibliography}{9999.}

\bibitem[Bo83]{Bo83} M. Boyle, \emph{Lower entropy factors of sofic systems}. Ergodic Theory Dynam.
Systems, {\bf 3}, 541--557

\bibitem[BD05]{BD05} M. Boyle and T. Downarowicz, 
\emph{The entropy theory of symbolic
extensions}, Inventiones Math. {\bf 156} (2004), 119--161

\bibitem[Bu16]{Bu16} D. Burguet,
\emph{Embedding asymptotically expansive systems}, preprint,  http://arxiv.org/abs/1504.06559

\bibitem[Bu16']{Bu16'} D. Burguet,
\emph{Periodic expansiveness of smooth surface diffeomorphisms and applications}, preprint


\bibitem[Buz97]{Buz97} J. Buzzi,
\emph{ Intrinsic ergodicity for smooth interval maps},
Israel J. Math., 100 (1997), 125--161

\bibitem[Do05]{Do05} T. Downarowicz, \emph{Entropy structure} J. Anal. Math., {\bf 96} (2005), 57--116

\bibitem[Do11]{Do11} T. Downarowicz, \emph{Entropy in dynamical systems}, 
New Mathematical Monographs, vol. 18, Cambridge University Press, Cambridge (2011)


\bibitem[Ho13]{Ho13} M. Hochman, 
\emph{Isomorphism and embedding into Markov shifts off universally null sets}
Acta Applicandae Mathematicae {\bf 126} (Hang Kim Memorial Volume), Issue 1 (2013), pp 187--201

\bibitem[Ho16]{Ho16} M. Hochman, 
\emph{Every Borel automorphism without finite invariant measures admits a two-set generator}, preprint, http://arxiv.org/abs/1508


\bibitem[Kr82]{Kr82} W. Krieger,
\emph{On the subsystems of topological Markov chains},
Erg. Th. Dyn. Syst. {\bf 2}, (1882), p. 195--202

\bibitem[Ku95]{Ku95} J. Kulesza, \emph{Zero-dimensional covers of finite-dimensional dynamical systems}, Erg. Th. Dyn. Syst. {\bf 15} (1995), 939--950

\bibitem[Mi76]{Mi76} Misiurewicz, M. \emph{Topological conditional entropy}, 
Studia Math., {\bf 55}, 175--200

\bibitem[Or74]{Or74}D. S. Ornstein, \emph{Ergodic theory, randomness, and dynamical systems}, Yale University Press, New Haven, Conn.-London, 1974. James K. Whittemore Lectures in Mathematics given at Yale University, Yale Mathematical Monographs, No. 5.


\bibitem[Se09]{Se09} J. Serafin,
\emph{Universally finitary symbolic extensions},
Fund. Math. {\bf 206} (2009), 281--285

\bibitem[Se12]{Se12} J. Serafin, \emph{A faithful symbolic extension},
Comm. Pure and Appl. Analysis {\bf 11} (2012), 1051--1062

\bibitem[Sh96]{Sh96} P. Shields, \emph{The ergodic theory of discrete sample paths},
AMS Graduate Studies in Mathematics 13 (1996).

\end{thebibliography}
\end{document}